\documentclass[a4paper,12pt,leqno]{amsart}
\usepackage[english]{babel}
\usepackage{amsmath}
\usepackage{amscd}
\usepackage{amsgen}
\usepackage[final]{epsfig}
\usepackage{latexsym}
\usepackage{amsfonts}
\usepackage{amssymb}
\usepackage{amsthm}
\usepackage{url}
\usepackage{slashed}

\allowdisplaybreaks

\setlength{\marginparwidth}{.8in}

\setlength{\textheight}{8.5in}
\setlength{\textwidth}{6in}

\setlength{\oddsidemargin}{0.2in}
\setlength{\evensidemargin}{0.2in}

\setlength{\topmargin}{0.18in}
\setlength{\headheight}{0.15in}

\pagestyle{headings}

\theoremstyle{plain}

\newtheorem{thm}{Theorem}[section]
\newtheorem{lemm}{Lemma}[section]
\newtheorem{conj}{Conjecture}[section]
\newtheorem{corr}{Corollary}[section]
\newtheorem{defn}{Definition}[section]

\newtheorem{ex}{Example}[section]

\newcommand{\Hess}{\operatorname{Hess}}
\newcommand{\Ric}{\operatorname{Ric}}
\newcommand{\tr}{\operatorname{tr}}
\newcommand{\Res}{\operatorname{Res}}

        %C
\def\r{{\mathbb R}}        %R
        %B
\def\n{{\mathbb N}}        %N
\def\h{{\mathbb H}}        %H
\def\S{{\mathbb S}}        %S
\def\st{\stackrel{\text{def}}{=}}

\def\B{{\mathcal B}}     %Bach tensor
\def\C{{\sf C}}          %Weyl tensor
\def\CC{{\bf C}}
\def\L{{\mathcal L}}
\def\G{{\mathcal G}}
\def\M{{\mathcal M}}

\def\OB{{\mathcal O}}
\def\P{{\mathcal P}}
\def\Q{{\mathcal Q}}
\def\R{{\mathcal R}}
\def\T{{\mathcal T}}
\def\V{{\mathcal V}}
\def\Z{{\mathbb Z}}
\def\Y{{\mathcal C}}     %Cotton tensor

\def\Rho{{\sf P}}        %Rho-Tensor
\def\PP{{\bf P}}

\def\J{{\sf J}}
\def\f{{\frac{n}{2}}}

\numberwithin{equation}{section}

\title{On conformally covariant powers of the Laplacian}

\author{Andreas Juhl}
\address{Humboldt-Universit\"at, Institut f\"ur Mathematik,
Unter den Linden, 10099 Berlin, G
ermany}

\email{ajuhl@math.hu-berlin.de}

\begin{document}

\begin{abstract} We propose and discuss recursive formulas for
conformally covariant powers $P_{2N}$ of the Laplacian
(GJMS-operators). For locally conformally flat metrics, these
describe the non-constant part of any GJMS-operator as the sum of a
certain linear combination of compositions of lower order
GJMS-operators (primary part) and a second-order operator which is
defined by the Schouten tensor (secondary part). We complete the
description of GJMS-operators by proposing and discussing recursive
formulas for their constant terms, i.e., for Branson's
$Q$-curvatures, along similar lines. We confirm the picture in a
number of cases. Full proofs are given for spheres of any dimension
and arbitrary signature. Moreover, we prove formulas of the
respective critical third power $P_6$ in terms of the Yamabe
operator $P_2$ and the Paneitz operator $P_4$, and of a fourth power
in terms of $P_2$, $P_4$ and $P_6$. For general metrics, the latter
involves the first two of Graham's extended obstruction tensors
\cite{G-ext}. In full generality, the recursive formulas remain
conjectural. We describe their relation to the theory of residue
families and the associated $Q$-polynomials as developed in
\cite{J-book}.
\end{abstract}

\maketitle

\centerline \today

\renewcommand{\thefootnote}{}

\footnotetext{The work was supported by SFB 647 "Raum-Zeit-Materie"
of DFG.}

\footnotetext{MSC 2000: Primary 53A30, 53B20, 53B50, Secondary
33C20, 58J50.}

\tableofcontents

%%%%%%%%%%%%%%%%%%%%%%%%%%%%%%%%%%%%%%%%%%%%%%%%%%%%%%%%%%%%%%%%%%%%%%%%%%%%%%%%
\section{Introduction}

The Laplace-Beltrami operator $\Delta_g$ of a Riemannian manifold
$(M,g)$ is one of the basic geometric differential operators. Its
significance rests on its invariance with respect to isometries. In
two dimensions, it is also invariant (or rather covariant) with
respect to conformal changes $g \mapsto e^{2\varphi} g$ of the
metric. Although this is not true in dimension $n \ge 3$, the
operator
\begin{equation}\label{yama}
P_2(g) = \Delta_g - \left(\f-1\right) \J_g, \quad \J_g =
\tau_g/2(n-1),
\end{equation}
which arises by addition of a multiple of the scalar curvature
$\tau$, is conformally covariant, i.e.,
$$
e^{(\f+1)\varphi} \circ P_2(e^{2\varphi}g)  = P_2(g) \circ
e^{(\f-1)\varphi}
$$
for all $\varphi \in C^\infty(M)$ (here the functions $e^{\varphi}$
act as multiplication operators). The operator \eqref{yama} is known
as the conformal Laplacian or Yamabe operator. It plays a central
role in conformal geometry and related geometric analysis. Here and
throughout, we use the convention that $-\Delta$ is non-negative.

About twenty five year ago, a conformally covariant operator of the
form $\Delta^2 + LOT$ was discovered independently by Paneitz
\cite{P}, Eastwood-Singer \cite{ES} and Riegert \cite{Rieg}; $``LOT"$
indicates terms with fewer than four derivatives. On manifolds of
dimension $n \ge 3$, it is defined by
\begin{equation}\label{pan}
P_4 = \Delta^2 + \delta ((n-2) \J g - 4 \Rho) \# d +
\left(\f-2\right) \left(\f \J^2 - 2 |\Rho|^2 - \Delta \J \right),
\end{equation}
where $\Rho$ is the Schouten tensor, i.e., $(n-2) \Rho = \Ric - \J
g$, $\#$ denotes the natural action of symmetric bilinear forms on
$1$-forms and $\delta$ is the formal adjoint of the differential $d$.
$P_4$ satisfies the transformation law
$$
e^{(\f+2)\varphi} \circ P_4(e^{2\varphi}g) = P_4(g) \circ
e^{(\f-2)\varphi}, \; \varphi \in C^\infty(M).
$$
A significant difference between \eqref{yama} and \eqref{pan} is the
appearance of the Ricci tensor in the Paneitz operator. The scalar
curvature quantity
\begin{equation}\label{q4-gen}
Q_4 = \f \J^2 - 2 |\Rho|^2 - \Delta \J
\end{equation}
in the constant term of $P_4$ is a special case of Branson's
$Q$-curvature \cite{bran-2}. For $n=4$, the fourth-order curvature
quantity $Q_4$ satisfies the remarkable transformation law \cite{bo}
\begin{equation}\label{Q4}
e^{4\varphi} Q_4(e^{2\varphi}g) = Q_4(g) + P_4(g)(\varphi),
\end{equation}
which generalizes
\begin{equation}\label{gauss}
e^{2\varphi} Q_2(e^{2\varphi}g) = Q_2(g) - P_2(g)(\varphi)
\end{equation}
for $Q_2 = \J = \tau/2$ in dimension $2$. Since $\tau/2$ is the Gau{\ss}
curvature and $P_2 = \Delta$, \eqref{gauss} is nothing else than the
Gau{\ss} curvature prescription equation. The $Q$-curvature prescription
equation \eqref{Q4} has been at the center of much research in
recent years (see \cite{malch} for a review).

The discovery of $P_4$ naturally raised the problem of constructing
higher order analogs, i.e., of similarly correcting any power
$\Delta^N$ of the Laplacian by appropriate lower order terms so that
the resulting operator becomes conformally covariant. For $N=3$,
such results are already contained in \cite{bran-1}. The
construction in \cite{GJMS} of conformally covariant powers of the
Laplacian in terms of the powers of the Laplacian for the
Fefferman-Graham ambient metric \cite{cartan}, \cite{FG2} settled
the existence problem. In addition, it revealed obstructions to
their existence on even dimensional manifolds. In the following, we
shall refer to the operators constructed in \cite{GJMS} as the
GJMS-operators, and denote them by $P_{2N}$. On a manifold of even
dimension $n$, the GJMS-operator of order $n$ will be called the
{\em critical} GJMS-operator. For more details we refer to Section
\ref{m-def}.

The Yamabe operator \eqref{yama} and the Paneitz operator
\eqref{pan} are the first two GJMS-operators. For higher orders, the
{\em structure} of the GJMS-operators remained obscure up to now,
and it is generally believed that explicit formulas for them are
hopelessly complicated due to the exponential increase of their
complexity as a function of the order. It is tempting to compare
this with the complexity of heat kernels.

One of the remarkable properties of the GJMS-operators is that,
through conformal variation, they are determined by their constant
terms. More precisely,
\begin{equation}\label{sub-det}
(d/dt)|_0 \left(e^{2t\varphi} Q_{2N} (e^{2t\varphi} g) \right) =
(-1)^N P_{2N}^0(g)(\varphi),
\end{equation}
where
$$
P_{2N}(1) = (-1)^N \left(\f-N\right) Q_{2N},
$$
and $P_{2N}^0$ denotes the non-constant part of $P_{2N}$. We shall
refer to the quantities $Q_{2N}$ as Branson's $Q$-curvatures (see
Section \ref{m-def}) although sometimes only the critical
$Q$-curvature $Q_n$ bears that name. Thus, an understanding of the
GJMS-operators is intimately connected with an understanding of the
structure of the $Q$-curvatures. Since the complexity of
$Q$-curvatures exponentially increases as well, it is generally
believed that aiming for explicit formulas for high order
$Q$-curvatures is also hopeless. Even in the presence of
well-structured formulas for $Q$-curvatures, it remains a
non-trivial problem to derive such formulas for the corresponding
GJMS-operators by conformal variation.

On the other hand, motivated by the rich results in geometric
analysis around $P_4$ and $Q_4$, explicit formulas for high order
$Q$-curvatures and GJMS-operators are of substantial interest.
Uncovering their structure could open the way to future geometric
applications. Presently, this is an almost unexplored area.

%revised

A remarkable exception is the work \cite{GP}. It addresses the
problem to find explicit formulas for GJMS-operators from the point
of view of tractor calculus. Gover and Peterson describe an
algorithm for deriving explicit formulas for these operators in
terms of tractor constructions. An evaluation of the algorithm for
$P_8$ in terms of the Levi-Civita connection and its curvature
generates an explicit formula which already occupies several pages.
In the opinion of the present author, these results supported the
belief that the structures of the operators $P_{2N}$ (and the
related $Q$-curvatures $Q_{2N}$) are hopelessly complicated.

The moral of the present paper is that, in contrast to the accepted
opinion, the complexity of high order GJMS-operators and
$Q$-curvatures is strongly tamed by beautiful recursive structures.
More precisely, we formulate systems of conjectural recursive
relations among GJMS-operators and $Q$-curvatures, and describe how
these would lead to explicit formulas. The relations are summarized
in Conjecture \ref{formula}, Conjecture \ref{Q-quadrat} and
Conjecture \ref{Q-G}.

%revised

Conjecture \ref{formula} is supported by complete proofs of the
corresponding formulas for $P_6$ (in general dimensions and for
general metrics) and $P_8$ (in the critical dimension and for
general metrics) as well as by basic structural results along the
lines of these conjectures. We prove that the GJMS-operators on the
conformally flat round spheres $\S^n$ are captured by the recursive
algorithm and confirm an extension of that result to the conformally
flat pseudo-spheres $\S^{q,p}$. Similarly, Conjecture
\ref{Q-quadrat} and Conjecture \ref{Q-G} are supported by proofs for
$Q_{2N}$ with $N \le 4$ for general metrics and proofs for all
spheres $\S^n$ and pseudo-spheres $\S^{q,p}$.

Although a complete understanding of the picture requires much more
efforts, it is tantalizing to regard its overall simplicity as an
argument in its favor.

The main features of the proposed recursive formulas for
GJMS-operators are the following.
\begin{itemize}
\item Any GJMS-operator is described by a primary part, a secondary part
(given by a second-order operator), and a constant term (given by
$Q$-curvature).
\item The primary parts are defined in terms of universal linear
combinations of compositions of respective lower order
GJMS-operators.
\end{itemize}
Here universality means that the coefficients of the linear
combinations do not depend on the dimension of the underlying
manifold.

In order to complete the description of GJMS-operators, we propose a
recursive description of $Q$-curvatures along similar lines. The
main features of the proposed recursive formulas for $Q$-curvatures
are the following.
\begin{itemize}
\item Any $Q$-curvature is described as the sum of a primary
and a secondary part.
\item The primary parts of GJMS-operators and $Q$-curvatures are
linked to each other.
\item The secondary parts of $Q$-curvatures are given by universal
formulas in terms of holographic coefficients.
\end{itemize}

The holographic coefficients (or renormalized volume coefficients
\cite{G-vol}, \cite{G-ext}) are functionals of a metric which arise
as the coefficients in the Taylor expansion of the volume form of an
associated Poincar\'e-Einstein metric. They are locally determined
by the metric and can be written in terms of (derivatives of) the
curvature tensor. One of these quantities plays the role of a
conformal anomaly of the renormalized volume. The latter concept was
introduced in connection with the AdS/CFT duality \cite{HS},
\cite{W}, \cite{G-vol}, \cite{and}. For more information see Section
\ref{Q-rec} and \cite{J-book}.

In connection with the recursive formulas for $Q$-curvatures, the
principle of universality means that formulas in the critical
dimension literally hold true also in the subcritical cases.

We illustrate the recursive structure of GJMS-operators by means of
the conformally covariant third power $P_6$ of the Laplacian. For
this purpose, we restrict to locally conformally flat metrics and
comment only briefly on the general case. First of all, on locally
conformally flat manifolds of dimension $n=6$, the self-adjoint
operator
\begin{equation}\label{illu}
\PP_6 \st \big[2(P_2 P_4 + P_4 P_2) - 3 P_2^3\big]^0 - 48 \delta
(\Rho^2 \# d) = \Delta^3 + LOT
\end{equation}
is conformally covariant, i.e., satisfies
$$
e^{6\varphi} \PP_6(e^{2\varphi}g) = \PP_6(g)
$$
for all $\varphi \in C^\infty(M)$. Here $[\cdot]^0$ denotes the
non-constant part of the respective operator in brackets. Moreover,
the operator $\PP_6$ coincides with the critical GJMS-operator
$P_6$. These results were first obtained in \cite{J-book}. An
alternative proof of the conformal covariance of a generalization of
$\PP_6$ for general metrics will be given Section \ref{inv-4-6}.

Of course, the formula \eqref{illu} is not explicit in terms of the
Levi-Civita connection and its curvature. But such formulas easily
follow from \eqref{illu} by using the formulas \eqref{yama} and
\eqref{pan} for $P_2$ and $P_4$. Although the resulting expressions
might be interesting in connection with applications to geometric
analysis, they will hide the recursive structure expressed by
\eqref{illu}.

Now the right-hand side of \eqref{illu} is the sum of the
non-constant part of the {\em primary part}
\begin{equation}\label{prime}
\P_6 = 2P_2P_4 + 2P_4P_2 - 3P_2^3
\end{equation}
and the {\em secondary part}, which is a multiple of the
second-order operator $\delta(\Rho^2 \#d)$; the constant term
vanishes in this case. For general metrics, the secondary part
contains an additional second-order operator which is defined by the
Bach tensor (or rather Graham's \cite{G-ext} first extended
obstruction tensor).

In the locally conformally flat category, Theorem \ref{M8} yields a
similar formula
$$
\PP_8 \st \P_8^0 - c_4 \delta(\Rho^3 \# d), \; c_4 = 3!4!2^3
$$
for a conformally covariant operator of the form $\Delta^4 + LOT$ in
dimension $n=8$. Here the primary part $\P_8$ is a certain linear
combination of all possible compositions of lower order
GJMS-operators to an operator of order $8$. The secondary part is a
second-order operator which is determined by the Schouten tensor
$\Rho$. For general metrics, Theorem \ref{P8-non-flat} shows that
the corresponding secondary parts involve additional contributions
of the first two extended obstruction tensors $\Omega^{(1)}$ and
$\Omega^{(2)}$.

The last point is worth emphasizing. The operators $P_2$, $P_4$ and
$P_6$ are generated by linear combinations of compositions of
respective lower order relatives and addition of suitable
second-order correction terms. In particular, these constructions do
not involve the Fefferman-Graham ambient metric. But in $\PP_8$ the
ambient metric is {\em forced} to appear in terms of the extended
obstruction tensors.

It remains open whether $\PP_8$ coincides with $P_8$.

Conjecture \ref{formula} specifies in which sense these results are
special cases of a representation formula for {\em all}
GJMS-operator. This conjecture concerns locally conformally flat
metrics. It states that the non-constant part of $P_{2N}$ always can
be written in the form
\begin{equation}\label{fund}
P_{2N}^0 = \P_{2N}^0 - c_N \delta(\Rho^{N-1}\#d),
\end{equation}
where the primary part $\P_{2N}$ is a remarkable linear combination
of compositions of GJMS-operators of order $\le 2N-2$. For general
metrics, only the secondary part is expected to become more
complicated (generalizing Theorem \ref{P8-non-flat}).

The primary parts $\P_{2N}$ are defined in Section \ref{m-def}.
Combining \eqref{fund} with the formula \eqref{q-curv} for the
constant term of $P_{2N}$, yields a recursive formula for $P_{2N}$
in terms of lower order GJMS-operators and $Q$-curvatures. In order
to recognize the right-hand side of \eqref{fund} as the non-constant
part of a conformally covariant operator, it is a key step to prove
that the conformal variation of $\P_{2N}$ is a {\em second-order}
operator. This is done in Theorem \ref{c-cv}. Theorem \ref{comm-2}
provides an analogous treatment of the second-order secondary part
in \eqref{fund}.

In the special case of $\PP_6$ in dimension $n=6$, the variational
formula reads
\begin{equation}\label{ic-var-6}
(d/dt)|_0 \left(e^{6t\varphi} \P_6(e^{2t\varphi}g) \right) = 4
[\M_4,[P_2,\varphi]] + 2 [P_2,[\M_4,\varphi]],
\end{equation}
where $\M_4 = P_4 - \P_4$ with $\P_4 = P_2^2$. Since $\M_4$ is a
second-order operator, it follows that the right-hand side of
\eqref{ic-var-6} is a second-order operator. Combining
\eqref{ic-var-6} with the conformal variation law of $\delta(\Rho^2
\# d)$ (Theorem \ref{comm-2}), proves the conformal covariance of
$\PP_6$.

It is natural to ask why $\PP_6$ (defined by \eqref{illu}) coincides
with the GJMS-operator $P_6$. This coincidence does {\em not} follow
from the above discussion. Instead, in the locally conformally flat
case, it rests on the recursive formula
\begin{equation}\label{q6-basic}
Q_6 = \left[ -2P_2(Q_4) + 2 P_4(Q_2) - 3 P_2^2(Q_2)\right] - 6(Q_4 +
P_2(Q_2)) \cdot Q_2 + 48 \tr (\wedge^3 \Rho),
\end{equation}
which expresses the order six curvature quantity $Q_6$ in terms of
$Q$-curvatures and GJMS-operators of orders $\le 4$ (and the
Schouten tensor $\Rho$). Using conformal variation, i.e.,
$$
P_6(g)(\varphi) = -(d/dt)|_0 (e^{6t\varphi} Q_6(e^{2t\varphi}g)),
$$
it follows that $P_6$ is given by \eqref{illu} (for a detailed proof
we refer to \cite{J-book}, Section 6.12).

We illustrate the principles of the recursive description of
$Q$-curvatures by means of \eqref{q6-basic}. First of all, the sum
$$
\Q_6 \st -2P_2(Q_4) + 2 P_4(Q_2) - 3 P_2^2(Q_2)
$$
will be called the {\em primary part} of $Q_6$. The relation between
the primary part $\Q_6$ of $Q_6$ and the primary part $\P_6$ (see
\eqref{prime}) of $P_6$ is obvious: in order to get the primary part
of $Q_6$, one replaces the most right factor of each summand in the
primary part of $P_6$ by the corresponding $Q$-curvature (up to a
sign). The general case is defined in Definition \ref{Q-leading}.

\eqref{q6-basic} is a special case of Conjecture \ref{Q-quadrat}. It
relates the secondary parts of $Q$-curvatures. In fact, the
quantities $Q_4 + P_2(Q_2)$ and $Q_2$, which in \eqref{q6-basic}
contribute to the secondary part of $Q_6$, are natural relatives of
$Q_6-\Q_6$. They appear on the left-hand sides of the analogous
formulas
\begin{equation}\label{univ-q4}
Q_4 + P_2(Q_2) = - Q_2 \cdot Q_2  + 4 \tr(\wedge^2 \Rho)
\end{equation}
and
\begin{equation}\label{univ-q2}
Q_2 = \tr (\Rho).
\end{equation}
with the respective primary parts $\Q_4 = -P_2(Q_2)$ and $\Q_2 = 0$
of $Q_4$ and $Q_2$. In the other direction, the difference
$$
Q_6 - \Q_6 = Q_6 - \left[-2 P_2 (Q_4) + 2 P_4 (Q_2) - 3 P_2^2 (Q_2)
\right]
$$
contributes to the secondary part of $Q_8$.

There is an important equivalent formulation of Conjecture
\ref{Q-quadrat}. It arises as follows. Using \eqref{univ-q4} and
\eqref{univ-q2}, the presentation \eqref{q6-basic} of $Q_6$ can be
written in the alternative form
\begin{equation}\label{Q6-v}
Q_6 - \Q_6 = -48 (8 v_6 - 4 v_4 v_2 + v_2^3),
\end{equation}
where the holographic coefficients $v_{2j}$ (see \eqref{hol-coeff})
are given by
$$
v_{2j} = (-2^{-1})^j \tr (\wedge^j \Rho).
$$
\eqref{Q6-v} is an analog of
\begin{equation}\label{Q4-v}
Q_4 - \Q_4 = 4 (4 v_4 - v_2^2).
\end{equation}

The right-hand sides of \eqref{Q4-v} and \eqref{Q6-v} have the
following interpretation. Let the functions $w_{2j} \in C^\infty(M)$
be defined by the formal power series expansion
$$
\sqrt{v(r)} = 1 + w_2 r^2 + w_4 r^4 + w_6 r^6 + \cdots,
$$
where $v(r)$ is defined in \eqref{hol-coeff}. Then
\begin{equation}\label{defect-4-6}
Q_4 - \Q_4 = 2! 2^3 w_4 \quad \mbox{and} \quad Q_6 - \Q_6 = - 2! 3!
2^5 w_6.
\end{equation}
The identities in \eqref{defect-4-6} hold true in all dimensions $n
\ge 3$ (in the locally conformally flat case). These are examples of
universality.

It is convenient and natural to summarize the descriptions of the
secondary parts $Q_{2N}-\Q_{2N}$ in form of the equality (Conjecture
\ref{Q-G})
\begin{equation}\label{second-gen}
\G\left(\frac{r^2}{4}\right) = \sqrt{v(r)}
\end{equation}
of generating functions. Here
\begin{equation}\label{gf}
\G(r) \st  1 + \sum_{N \ge 1} (-1)^N (Q_{2N} - \Q_{2N})
\frac{r^N}{N!(N-1)!}.
\end{equation}
In particular, \eqref{second-gen} contains the next identity
$$
Q_8 - \Q_8 = 3! 4! 2^8 w_8
$$
(see \eqref{next} and Example \ref{qres-8}). Of course, for even $n$
and general metrics, \eqref{second-gen} requires to be interpreted
as an identity of terminating Taylor series.

In connection with \eqref{second-gen} some comments are in order.
The generating function $\G$ encodes curvature quantities of a
(pseudo)-Riemannian metric on a manifold $M$. The equality
\eqref{second-gen} relates $\G$ to the volume form of an associated
Poincar\'e-Einstein metric on a space $X =(0,\varepsilon) \times M$
of {\em one more} dimension. The perspective of the AdS/CFT-duality
\cite{W} motivates to refer to this relation as a {\em holographic
duality}. It is important to realize that the variable $r$ plays
fundamentally different roles on both sides of \eqref{second-gen}:
while on the right-hand side it has the {\em geometric} meaning of a
defining function of the boundary $M$ of $X$, on the left-hand side
it is only a {\em formal} variable of a generating function of data
which live {\em on} the boundary $M$.

In addition, it is natural to regard $\G$ as the generating function
of the {\em leading} coefficients $\L_{2N}$ of the residue
polynomials $Q_{2N}^{res}(\lambda)$ (see \eqref{Taylor-q-res}). The
latter polynomials are defined as the constant terms of the
respective residue families $D_{2N}^{res}(\lambda)$. In these terms,
\begin{equation}
\G(r) = - \sum_{N \ge 0} \L_{2N} \frac{r^N}{N!}.
\end{equation}
This interpretation of $\G$ in full generality remains conjectural.
The concept of residue families was introduced in \cite{J-book}.
Their recursive structure and connections with GJMS-operators and
$Q$-curvatures are the origin of all recursive relations discussed
here. The  basic properties of residue families are recalled in
Section \ref{polynomial}.

The paper is organized as follows. In Section \ref{m-def}, we recall
the main properties of GJMS-operators and combine them to the
operators $\M_{2N}$ of order $2N$. We display explicit formula for
$\M_{2N}$ for $N \le 5$ and prove that $\P_{2N}$ has leading part
$\Delta^N$. Section \ref{inf-var} contains the proofs of the
conformal variational formulas for $\P_{2N}$ and $\T_{n/2-1}$. The
conjectural recursive description of GJMS-operators $P_{2N}$ (for
locally conformally flat metrics) is formulated in Section \ref{mc}.
In Section \ref{fourth}, we use the results of Section \ref{inf-var}
to derive a conformally covariant fourth-order power of the
Laplacian (for locally conformally flat metrics). In Sections
\ref{spheres} and \ref{ps-sphere}, we consider the specializations
of Conjecture \ref{formula} to round spheres and pseudo-spheres.
Section \ref{spheres} gives a proof of a refinement for round
spheres. In Section \ref{polynomial}, we explain in which sense the
definition of $\M_{2N}$ is inspired by the $Q$-polynomials of
\cite{J-book}. This sets the background of the formulation of the
conjectural recursive relations for $Q$-curvatures in Section
\ref{Q-rec}. These relations appear in two equivalent forms:
Conjecture \ref{Q-quadrat} and Conjecture \ref{Q-G}. In Section
\ref{Q-rec}, we confirm the general picture for round spheres and
pseudo-spheres. In Section \ref{mix}, we explicate GJMS-operators
for a related class of Riemannian metrics \cite{GL} with terminating
Poincar\'e-Einstein metrics, and confirm Conjecture \ref{formula}
for this class. In Section \ref{n-flat}, we extend the construction
of a conformally covariant fourth power in Section \ref{fourth} to
general metrics, and show that the result confirms a special case of
Conjecture \ref{Haupt}, which  extends Conjecture \ref{formula} to
general metrics. In Section \ref{co}, we collect comments on various
open problems and perspectives. In Section \ref{app} we present
self-contained and detailed proofs of the conformal covariance of
$P_4$ and $P_6$ in the respective critical dimensions and for
general metrics. These proofs illustrate central arguments of the
paper. The reader may start by reading them.

Throughout we use the notation and conventions of \cite{J-book}.
Computer experiments using Mathematica had an important impact on
this work. Such experiments were involved both in the tests of
numerous identities and in the search for the hidden patterns.
Typical instances for the interactions of theoretical and
experimental work are Definition \ref{coefficient}, Lemma \ref{ps}
and Lemma \ref{Q-sphere}. The related programming was done by
Carsten Falk. The material in Section \ref{spheres} emerged from a
discussion with Christian Krattenthaler (Wien). It is a pleasure to
thank him for allowing to present his proof of Theorem \ref{sphere}
in Section \ref{spheres}. Finally, I would like to thank Jesse Alt
(Berlin) and Felipe Leitner (Stuttgart) for comments on the
manuscript.

%%%%%%%%%%%%%%%%%%%%%%%%%%%%%%%%%%%%%%%%%%%%%%%%%%%%%%%%%%%%%%%%%%%%%%%%%%%%%
\section{The operators $\M_{2N}$}
\label{m-def}

We start by recalling the existence of conformally covariant powers
of the Laplacian.

\begin{thm}[\cite{GJMS}]\label{existence} Let $M$ be a manifold
of dimension $n$. For even $n$ and all integers $1 \le N \le \f$,
there exists a natural differential operator $P_{2N}(\cdot)$ of the
form
$$
P_{2N} = \Delta^N + LOT
$$
such that
\begin{equation}\label{conf-cov}
e^{(\f+N)\varphi} \circ P_{2N}(e^{2\varphi}g) = P_{2N}(g) \circ
e^{(\f-N)\varphi}
\end{equation}
for all metrics $g$ and all $\varphi \in C^\infty(M)$. For odd $n$,
such operators exist for all $N \ge 1$.
\end{thm}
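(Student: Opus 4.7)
The plan is to follow the ambient metric construction of Fefferman--Graham, which is the approach taken in the original paper \cite{GJMS}. The starting point is the metric bundle $\mathcal{G} = \{(x,t^2 g_x) : x \in M,\ t>0\}$, a principal $\mathbb{R}_+$-bundle over $M$ carrying the tautological degenerate symmetric $2$-tensor $\mathbf{g}_0$ and the Euler field $T = t\partial_t$. A conformally weighted function of weight $w$ on $M$ corresponds to a smooth function on $\mathcal{G}$ homogeneous of degree $w$ with respect to $T$. Then one forms the ambient space $\tilde M = \mathcal{G}\times (-\varepsilon,\varepsilon)$ with coordinate $\rho$ and constructs the \emph{ambient metric} $\tilde g$: a Lorentzian metric on $\tilde M$, homogeneous of degree $2$ in the $\mathbb{R}_+$-direction, satisfying $\iota^*\tilde g = \mathbf{g}_0 + 2\,d(t\rho)\,dt$ on $\mathcal{G}=\{\rho=0\}$, and $\mathrm{Ric}(\tilde g) = O(\rho^\infty)$ when $n$ is odd, respectively $\mathrm{Ric}(\tilde g) = O(\rho^{n/2-1})$ when $n$ is even. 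The existence and essential uniqueness of $\tilde g$ is the central input; I would invoke it as a black box.

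Given $\tilde g$, I would define $P_{2N}(g)$ as follows. For $f \in C^\infty(M)$ with conformal weight $-n/2+N$, let $\tilde f$ be any smooth extension to $\tilde M$ of the associated homogeneous function on $\mathcal{G}$. Set
$$
\widetilde{P_{2N}}\tilde f \st \tilde\Delta^N \tilde f\,\bigl|_{\mathcal{G}},
$$
where $\tilde\Delta$ is the Laplace--Beltrami operator of $\tilde g$. The key computational step is to check that $\tilde\Delta$ raises homogeneity by $-2$, so $\tilde\Delta^N\tilde f$ has homogeneity $-n/2-N$, and then to verify that its restriction to $\mathcal{G}$ depends only on $f$, not on the extension. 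The latter follows because, by Ricci-flatness of $\tilde g$ to the required order, changing $\tilde f$ by $\rho\,h$ alters $\tilde\Delta^N\tilde f$ by a term that vanishes at $\rho=0$ precisely when $N \le n/2$ in the even case, and for all $N$ in the odd case. This algebraic obstruction---an identity of the schematic shape $[\tilde\Delta,\rho] = 2(T+ n/2 + k)\,d\rho^\# + \rho(\cdots)$ acting on a homogeneous extension---is exactly what produces the range restriction stated in the theorem, and is the main technical point I would need to justify carefully.

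Naturality of the ambient metric then yields the conformal covariance. Under a conformal change $g \mapsto \hat g = e^{2\varphi}g$, the bundle $\mathcal{G}$ is unchanged, but its trivialization by $g$ is replaced by that induced by $\hat g$, related by $\hat t = e^{-\varphi}t$ along the fiber. A function of weight $w$ represented by $f$ with respect to $g$ is represented by $e^{w\varphi}f$ with respect to $\hat g$. Since the ambient construction depends only on the conformal class, the operator $\widetilde{P_{2N}}$ is unchanged, and translating its defining identity back to $M$ through the two trivializations gives exactly
$$
e^{(n/2+N)\varphi}\circ P_{2N}(e^{2\varphi}g) = P_{2N}(g)\circ e^{(n/2-N)\varphi}.
$$

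Finally, to identify the leading term, I would compute $\tilde\Delta$ in the adapted coordinates $(x,t,\rho)$ using the normal form $\tilde g = 2t\,dt\,d\rho + 2\rho\,dt^2 + t^2 g_\rho$, where $g_\rho$ is a one-parameter family on $M$ with $g_0 = g$. A direct expansion shows $\tilde\Delta = 2t^{-1}\partial_t\partial_\rho + t^{-2}\Delta_{g_\rho} + (\text{lower order in }\partial_\rho)$, so that applied to a homogeneous extension $\tilde f = t^{-n/2+N}(f + \rho f_1 + \cdots)$ the top-order contribution in the derivatives of $f$ comes from iterating $t^{-2}\Delta_g$; a straightforward induction then gives $P_{2N}(g) = \Delta_g^N + LOT$, with the lower order terms involving only the curvature of $g$ through the Taylor coefficients of $g_\rho$ (which are universally determined by $g$ via the Ricci equations). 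The hardest part of the argument is the extension-independence together with the sharp range $N\le n/2$ in even dimensions; once that is in place, covariance and leading symbol are essentially formal consequences.
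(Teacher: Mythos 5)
Your overall route is the same as the paper's: Theorem \ref{existence} is quoted from \cite{GJMS}, and the ambient-metric construction you outline (homogeneous extension, $N$-th power of the ambient Laplacian, restriction to the cone, naturality giving covariance, normal form giving the leading term) is exactly the cited one. However, the step you yourself single out as ``the main technical point'' is misattributed, and as written it would fail. Independence of the extension is \emph{not} what produces the range restriction, and it does not use Ricci-flatness at all. Writing $Q=\tilde{g}(T,T)$, so that the cone is $\{Q=0\}$ and two homogeneous extensions of $f$ of weight $-n/2+N$ differ by $Q\,u$ with $u$ homogeneous of degree $\mu=-n/2+N-2$, any straight pre-ambient metric satisfies $\nabla Q=2T$ and $\tilde{\Delta}Q=2(n+2)$, hence $[\tilde{\Delta},Q]=4T+2(n+2)$ on functions, and iterating gives
\begin{equation*}
[\tilde{\Delta}^N,Q]\,u \;=\; 2N\bigl(2\mu+n+4-2N\bigr)\,\tilde{\Delta}^{N-1}u .
\end{equation*}
For $\mu=-n/2+N-2$ this coefficient vanishes identically, for every $N$ and every $n$, even or odd. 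So extension-independence is unconditional and cannot be the origin of the hypothesis $N\le n/2$.

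The actual source of that restriction is the indeterminacy of the ambient metric itself in even dimensions: $\operatorname{Ric}(\tilde{g})$ can be made $O(\rho^{n/2-1})$ but no better, and the coefficient of $\rho^{n/2}$ in $g_\rho$ is not determined by $g$ (except for its trace). What must be verified is that, for $2N\le n$, the restriction $\tilde{\Delta}^N\tilde{f}\big|_{\rho=0}$ involves only the determined jets of $\tilde{g}$ (roughly, $\partial_\rho^k g_\rho|_{\rho=0}$ for $k\le N-1$ together with a trace occurring at order $N$) and is insensitive to the remaining ambiguity, including the diffeomorphism freedom; this is exactly how the paper itself phrases the even-dimensional failure for $2N>n$, namely that the construction is ``obstructed by the obstructions to the existence of the ambient metric.'' To repair your write-up, replace the extension-independence discussion by the commutator identity above (valid for all $N$) plus this count of which Taylor coefficients of $g_\rho$ enter. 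One further caution: your ``precisely when $N\le n/2$'' suggests the construction also yields non-existence for $2N>n$; it does not---that is the separate and deeper result of Graham and Gover--Hirachi cited in the paper---but the theorem as stated does not require it.
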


More precisely, it is shown in \cite{GJMS} how to derive such
conformally covariant powers of the Laplacian from the powers of the
Laplacian for the Fefferman-Graham ambient metric \cite{FG2}. For
even $n$ and $2N > n$, this construction is obstructed by the
obstructions to the existence of the ambient metric. However, the
non-existence of conformally covariant operators of the form
$\Delta^N + LOT$ for $2N > n$ is a deeper result. In fact, for $N >
\f$ it is impossible to correct $\Delta^N$ by lower order terms so
that the resulting operator satisfies \eqref{conf-cov}. The
non-existence of conformally covariant cubes of the Laplacian on
four-manifolds was discovered in \cite{G-non}. The general
non-existence was established in \cite{GoH}.

On the other hand, for locally conformally flat metrics, all
obstructions vanish, and the construction in \cite{GJMS} yields an
infinite sequence of conformally covariant operators $P_{2N}$ in any
dimension $n \ge 3$. Although in this case the ambient metric is
completely determined by $\Rho$, the complexity of explicit formulas
for the corresponding GJMS-operators increases quickly with their
order.

There are a few exceptional cases, in which simple explicit formulas
are available. On the round sphere $\S^n$, GJMS-operators are
intertwining operators for spherical principal series
representations. Hence they can be derived from the standard
Knapp-Stein intertwining operators. This yields the formula
\begin{equation}\label{product}
P_{2N} = \prod_{j=\f}^{\f+N-1} (\Delta \!-\! j(n\!-\!1\!-\!j)).
\end{equation}
The product formula \eqref{product} extends to Einstein metrics in
the form
\begin{equation}\label{einstein}
P_{2N} = \prod_{j=\f}^{\f+N-1} \left(
\Delta\!-\!\frac{j(n\!-\!1\!-\!j)}{n(n\!-\!1)} \tau \right),
\end{equation}
where $\tau$ is the constant scalar curvature of the Einstein
metric. For details see \cite{bran-2}, \cite{G-ein}, \cite{G-power},
\cite{FG2}, and \cite{J-book}. These examples will serve as basic
test cases of general statements.

In the following, we shall often distinguish (for even $n$) between
the {\em critical} GJMS-operator $P_n$ and the {\em subcritical}
GJMS-operators $P_{2N}$, $2N<n$.

By relating the operators $P_{2N}$ to scattering theory for
Poincar\'e-Einstein metrics, Graham and Zworski \cite{GZ} proved
that all $P_{2N}$ are formally self-adjoint.

Branson \cite{bran-2} used the constant term of $P_{2N}$ to define
the scalar curvature quantity $Q_{2N}$ through the formula
\begin{equation}\label{q-curv}
P_{2N}(1) = (-1)^N \left(\f-N\right) Q_{2N};
\end{equation}
note that the sign $(-1)^N$ is caused by our convention that
$-\Delta$ is the non-negative Laplacian. For even $n$, this defines
$Q_{2N}$ for $2N < n$, and we shall refer to these functions as to
the {\em subcritical} $Q$-curvatures. $Q_{2N}$ is a curvature
quantity of order $2N$, i.e., its definition involves $2N$
derivatives of the metric. For even $n$, the {\em critical}
$Q$-curvature $Q_n$ arises from its subcritical relatives of order
$n$ (but in dimension $>n$) by the "limit" $\text{dimension} \to n$.
For $Q_4$ as in \eqref{q4-gen}, this just means to set $n=4$.

Similarly, as for $Q_2$ and $Q_4$, the critical $Q$-curvature $Q_n$
satisfies the fundamental linear transformation law
\begin{equation}\label{fundamental}
e^{n\varphi} Q_n(e^{2\varphi}g) = Q_n(g) + (-1)^\f P_n(g)(\varphi),
\end{equation}
which involves the critical GJMS-operator $P_n$. It shows the
remarkable fact that the operator $P_n$ is completely determined by
the scalar curvature quantity $Q_n$. In the subcritical cases, the
non-constant part $P_{2N}^0$ of $P_{2N}$ is determined by the
conformal variation of $Q_{2N}$ (see \eqref{sub-det}). However, the
subcritical $Q$-curvatures do not obey a linear conformal
transformation law.

Now the operators $P_{2N}$ give rise to a sequence of operators
$\M_{2N}$, $N \ge 1$. As for $P_{2N}$, this sequences is infinite in
odd dimensions and possibly obstructed at $2N=n$ in even dimension
$n$. These restrictions are in force throughout and are suppressed
in the following.

We introduce some notation. A sequence $I = (I_1,\dots,I_r)$ of
integers $I_j \ge 1$ will be regarded as a composition of the sum
$\vert I \vert = I_1 + I_2 + \cdots + I_r$. Compositions are
partitions in which the order of the summands is considered. $|I|$
will be called the size of $I$. We set
$$
P_{2I} = P_{2I_1} \circ \cdots \circ P_{2I_r}.
$$

\begin{defn}\label{coefficient} For $N \ge 1$, let
\begin{equation}\label{M-sum}
\M_{2N} = \sum_{|I|=N} m_I P_{2I}
\end{equation}
with
\begin{equation}\label{m-form}
m_I = -(-1)^r |I|! (|I|-1)! \prod_{j=1}^r \frac{1}{I_j! (I_j-1)!}
\prod_{j=1}^{r-1} \frac{1}{I_j + I_{j+1}}.
\end{equation}
Then $m_{(N)} = 1$, and we define the primary part $\P_{2N}$ by the
decomposition
\begin{equation}\label{deco}
\M_{2N} = P_{2N} - \P_{2N}.
\end{equation}
\end{defn}

Note that
\begin{equation*}
m_I =  -(-1)^r \begin{pmatrix} |I| \\ I_1, \dots, I_r \end{pmatrix}
\begin{pmatrix} |I|\!-\!r \\ I_1\!-\!1, \dots, I_r\!-\!1 \end{pmatrix}
\frac{(N\!-\!1) \cdots (N\!-\!r\!+\!1)}{\prod_{j=1}^{r-1} (I_j +
I_{j+1})}
\end{equation*}
in terms of multinomial coefficients. Although it will not be
important in the sequel, it would be interesting to know whether all
$m_I$ are integers and whether $m_I$ has a combinatorial meaning.
For compositions $I$ with two entries, we easily find
\begin{equation}\label{m-double}
m_{(I_1,I_2)} = - \binom{N\!-\!1}{I_1}\binom{N\!-\!1}{I_2} \in \Z,
\; N = I_1+I_2.
\end{equation}

The sum in \eqref{M-sum} runs over all compositions $I$ of size
$|I|=N$. It contains $2^{N-1}$ terms. More precisely, there are
exactly $\binom{N-1}{r-1}$ terms with $r$ factors. For more details
on compositions see \cite{A}, Chapter 4.

Since $m_I \ne 0$ for all $I$, each possible composition of
GJMS-operators to an operator of order $2N$ contributes
non-trivially to the sum \eqref{M-sum}. Obstructions to the
existence of $\M_{2N}$ arise only through obstructions to the
existence of the GJMS-operators.

In general, GJMS-operators do not commute, and the coefficients
$m_I$ depend on the ordering of the entries of the composition $I$.
This is the reason for the consideration of compositions instead of
partitions. However, we observe

\begin{corr}\label{self} $m_I = m_{I^{-1}}$ for all $I$,
where $I^{-1} = (I_r,\dots,I_1)$ denotes the reversed (or inverse)
composition of $I=(I_1,\dots,I_r)$. In particular, all $\M_{2N}$ are
self-adjoint.
\end{corr}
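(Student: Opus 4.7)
The plan is to observe that the symmetry $m_I=m_{I^{-1}}$ is visible directly from the defining formula \eqref{m-form}, and then to combine it with the formal self-adjointness of the individual GJMS-operators (Graham--Zworski) to get self-adjointness of $\M_{2N}$.

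First I would inspect formula \eqref{m-form} factor by factor under the reversal $I=(I_1,\dots,I_r)\mapsto I^{-1}=(I_r,\dots,I_1)$. The prefactor $-(-1)^r|I|!(|I|-1)!$ depends only on $|I|$ and $r$, both of which are preserved by reversal. The product $\prod_{j=1}^r 1/(I_j!(I_j-1)!)$ is a product over the multiset of entries, so it is invariant under any permutation of $I$. The last product $\prod_{j=1}^{r-1} 1/(I_j+I_{j+1})$ runs over consecutive pairs; under reversal, the pair $(I_j,I_{j+1})$ becomes $(I_{r-j},I_{r-j+1})$, which simply relabels the same unordered consecutive pairs (in reversed order). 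Since each factor $1/(I_j+I_{j+1})$ is itself symmetric in the two entries, the product is unchanged. This gives $m_I=m_{I^{-1}}$.

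For self-adjointness of $\M_{2N}$, I would invoke that every GJMS-operator $P_{2k}$ is formally self-adjoint (recalled in the text as the Graham--Zworski result). Hence for any composition $I$,
\[
P_{2I}^* \;=\; (P_{2I_1}\circ\cdots\circ P_{2I_r})^* \;=\; P_{2I_r}\circ\cdots\circ P_{2I_1} \;=\; P_{2I^{-1}}.
\]
Combining this with the first step and reindexing the sum by $J=I^{-1}$ (which is a bijection on compositions of $N$) yields
\[
\M_{2N}^* \;=\; \sum_{|I|=N} m_I\,P_{2I^{-1}} \;=\; \sum_{|J|=N} m_{J^{-1}}\,P_{2J} \;=\; \sum_{|J|=N} m_J\,P_{2J} \;=\; \M_{2N}.
\]

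I do not anticipate a real obstacle here: both assertions are essentially formal once one has the explicit formula for $m_I$ and the self-adjointness of the $P_{2k}$. The only thing to be careful about is to check that the consecutive-pair product really is invariant under reversal (as opposed to merely under cyclic rotation); this follows because reversing the sequence reverses the list of consecutive pairs, and each factor is individually symmetric in its two arguments.
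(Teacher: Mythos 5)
Your argument is correct and is exactly the paper's proof, merely written out in more detail: the paper notes that the symmetry $m_I=m_{I^{-1}}$ is obvious from \eqref{m-form} and then invokes the formal self-adjointness of the GJMS-operators to transfer self-adjointness to $\M_{2N}$, which is precisely your reindexing argument. (Your only slip is a harmless index bookkeeping point: under reversal the $j$-th consecutive pair becomes $(I_{r-j+1},I_{r-j})$, but since each factor depends only on the sum of the pair, the conclusion stands.)
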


\begin{proof} The claimed symmetry of the coefficients is obvious
from \eqref{m-form}. Therefore, the self-adjointness of all
GJMS-operators implies the self-adjointness of all $\M_{2N}$.
\end{proof}

We display the first few operators $\M_{2N}$. The first two cases
are very simple.

\begin{ex}\label{m24} $\M_2 = P_2$ and $\M_4 = P_4 - P_2^2$. The
corresponding primary parts are
\begin{equation}\label{prim-2}
\P_2 = 0 \quad \mbox{and} \quad \P_4 = P_2^2.
\end{equation}
\end{ex}

The following two cases will play a substantial role in what
follows.

\begin{ex}\label{m6}
$$
\M_6 = P_6 - \P_6
$$
with the primary part
\begin{equation}\label{prim-4}
\P_6 = 2(P_2 P_4 + P_4 P_2) - 3 P_2^3.
\end{equation}
\end{ex}

\begin{ex}\label{m8}
$$
\M_8 = P_8 - \P_8
$$
with the primary part
\begin{equation}\label{prim-8}
\P_8 = (3 P_2 P_6 + 3 P_6 P_2 + 9 P_4^2) - (12 P_2^2 P_4 + 12 P_4
P_2^2 + 8 P_2 P_4 P_2) + 18 P_2^4.
\end{equation}
The sum contains $7$ terms.
\end{ex}

The following formula illustrates the exponentially increasing
complexity of the situation.

\begin{ex}\label{m10}
$$
\M_{10} = P_{10}- \P_{10}
$$
with the primary part
\begin{multline}\label{prim-10}
\P_{10} = (4 P_2 P_8 + 4 P_8 P_2 + 24 P_4 P_6 + 24 P_6 P_4) \\
- (60 P_2 P_4^2 + 60 P_4^2 P_2 + 30 P_2^2 P_6 + 30 P_6 P_2^2 + 15
P_2 P_6 P_2 + 80 P_4 P_2 P_4) \\ + (120 P_2^3 P_4 + 120 P_4 P_2^3 +
80 P_2^2 P_4 P_2 + 80 P_2 P_4 P_2^2) - 180 P_2^5.
\end{multline}
The sum contains $15$ terms.
\end{ex}

The operator $\P_{2N}$ will play the role of a primary part of the
GJMS-operator $P_{2N}$ in the sense that it differs from $P_{2N}$
only by a second-order operator. The following result is the minimal
requirement in order to qualify $\P_{2N}$ for that role.

\begin{lemm}\label{LPP} For all $N \ge 2$, the operator $\P_{2N}$ is
of the form $\Delta^N + LOT$.
\end{lemm}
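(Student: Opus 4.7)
The plan is to reduce the statement to a purely numerical identity about the coefficients $m_I$, and then establish that identity. Since every GJMS-operator satisfies $P_{2k} = \Delta^k + LOT$, any composition $P_{2I}$ with $|I|=N$ is of the form $\Delta^N + LOT$. Hence the top-order part of $\M_{2N} = \sum_{|I|=N} m_I P_{2I}$ equals $\bigl(\sum_{|I|=N} m_I\bigr)\Delta^N$, and that of $\P_{2N} = P_{2N} - \M_{2N}$ equals $\bigl(1 - \sum_{|I|=N} m_I\bigr)\Delta^N$. The lemma is therefore equivalent to the scalar identity
$$
S_N \;:=\; \sum_{|I|=N} m_I \;=\; 0 \qquad (N \ge 2),
$$
while $S_1 = m_{(1)} = 1$.

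To prove $S_N = 0$ for $N \ge 2$, I would set up a generating-function argument. Let $\phi(z) := \sum_{k \ge 1} z^k/(k!(k-1)!)$ and convert each coupling weight via $1/(I_j + I_{j+1}) = \int_0^1 t^{I_j+I_{j+1}-1}\,dt$. The sum over compositions of $N$ with $r$ parts then becomes the coefficient of $z^N$ in an iterated integral over $[0,1]^{r-1}$ whose integrand is the product $\phi(zt_1)\phi(zt_1t_2)\cdots\phi(zt_{r-2}t_{r-1})\phi(zt_{r-1})$ divided by $t_1\cdots t_{r-1}$. Summing over $r$ with the alternating sign $(-1)^{r+1}$, the claim $S_N = \delta_{N,1}$ amounts to saying that this generating series collapses to $z$. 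A cleaner variant I would prefer in practice is to test the identity on the round sphere $\S^n$ (or any Einstein background): by \eqref{product}, every $P_{2k}$ is a polynomial of degree $k$ in $\Delta$, so all GJMS-operators commute and $\M_{2N}$ is itself a polynomial in $\Delta$ whose leading coefficient is exactly $S_N$. The symmetry $m_I = m_{I^{-1}}$ from Corollary \ref{self} simplifies the book-keeping, and the explicit formulas for $\M_{2N}$, $N \le 5$, in Examples \ref{m24}--\ref{m10} already confirm the vanishing by direct computation.

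The main obstacle is the combinatorial identity $S_N = 0$ itself: the coupling weights $1/(I_j + I_{j+1})$ between adjacent parts of $I$ prevent a naive factorization of the sum along its parts, and the alternating sign across compositions of different lengths forces a genuine cancellation between many terms. The most structural route probably exploits the origin of the coefficients $m_I$ in the residue families and $Q$-polynomials of \cite{J-book} (cf.\ Section \ref{polynomial}), where $S_N = 0$ should appear as the vanishing of the top-degree coefficient of a specific $Q$-polynomial; this would give a conceptual, rather than computational, justification of the identity.
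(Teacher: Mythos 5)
Your reduction of the lemma to the scalar identity $\sum_{|I|=N} m_I = 0$ for $N \ge 2$ is correct and is exactly the paper's first step (equation \eqref{sum-0}). But that identity is the entire content of the proof, and you never establish it. The generating-function set-up with $1/(I_j+I_{j+1}) = \int_0^1 t^{I_j+I_{j+1}-1}\,dt$ is only a reformulation: you state that the resulting alternating sum of iterated integrals "collapses to $z$," which is precisely the claim to be proved, and no evaluation is carried out. The "cleaner variant" of testing on the round sphere does not work as a proof either: since $S_N=\sum_{|I|=N}m_I$ is a purely numerical quantity, specializing to $\S^n$ gains nothing unless you independently know that $\M_{2N}$ on the sphere has degree $<N$ in $\Delta$ — but that is (a weak form of) Theorem \ref{sphere}, a substantially harder result proved later by hypergeometric summation, so this route is circular at this stage. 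The checks for $N\le 5$ from the displayed examples and the appeal to the residue-family origin of the $m_I$ ("should appear as the vanishing of the top-degree coefficient of a $Q$-polynomial") are, respectively, finite verification and speculation, not a proof.

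For comparison, the paper proves the \emph{stronger} partial-sum identity \eqref{strong}, namely $\sum_{J,\,a+|J|=N} m_{(a,J)} = (-1)^{N-a}\binom{N-1}{a-1}$, from which \eqref{sum-0} follows by the alternating binomial sum. The induction on $N$ runs via the recursion \eqref{bino} expressing $m_{(a,b,K)}$ through $m_{(b,K)}$, and the resulting inner sum over $b$ is evaluated with the Beta-function identity \eqref{beta} — an idea close in spirit to your integral representation of the coupling factors, but organized around the first entry of the composition so that the induction closes. If you want to complete your argument, the missing ingredient is exactly such a refinement: a statement about the partial sums over compositions with fixed first (or last) entry that is stable under the recursion \eqref{bino}; the unrefined identity $S_N=0$ by itself does not self-propagate.
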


\begin{proof} The assertion is equivalent to
\begin{equation}\label{sum-0}
\sum_{|I|=N} m_I = 0.
\end{equation}
\eqref{sum-0} follows from the stronger relations
\begin{equation}\label{strong}
\sum_{J,\; a +|J|=N} m_{(a,J)} = (-1)^{N-a} \binom{N\!-\!1}{a\!-\!1}
\end{equation}
for  $1 \le a \le N$. In fact, \eqref{strong} implies
\begin{multline*}
(-1)^{N-1} \sum_{|I|=N} m_I = (-1)^{N-1} \sum_{a=1}^N
\left( \sum_{J, \; a+|J|=N} m_{(a,J)} \right) \\
= \binom{N\!-\!1}{0} - \binom{N\!-\!1}{1} \pm \cdots + (-1)^{N-1}
\binom{N\!-\!1}{N\!-\!1} = 0.
\end{multline*}
In order to prove \eqref{strong}, we write
$$
\sum_{J,\; a+|J|=N} m_{(a,J)} = \sum_{b,K, \; a+b+|K|=N} m_{(a,b,K)},
$$
and note that \eqref{m-form} implies
\begin{equation}\label{bino}
m_{(a,b,K)} = - \frac{1}{a+b} \binom{N}{a}^2 \, \frac{a
(N\!-\!a)}{N} m_{(b,K)}.
\end{equation}
Hence
\begin{equation}\label{Ind}
\sum_{J, \; a+|J|=N} m_{(a,J)} = - \binom{N}{a}^2 \, \frac{a
(N\!-\!a)}{N} \sum_{b=1}^{N-a} \frac{1}{a+b} \sum_{K, \; a+b+|K|=N}
m_{(b,K)}.
\end{equation}
We use \eqref{Ind} to prove \eqref{strong} by induction on $N$.
Suppose we have already proved \eqref{strong} up to $N-1$. Then the
right-hand side of \eqref{Ind} equals
$$
-\binom{N}{a}^2 \, \frac{a (N\!-\!a)}{N} \sum_{b=1}^{N-a} \frac{1}{a+b}
(-1)^{N-a-b} \binom{N\!-\!a\!-\!1}{b\!-\!1}.
$$
Thus, it suffices to verify that
\begin{equation}\label{kernel}
- \binom{N\!-\!1}{a\!-\!1} = \binom{N}{a}^2 \frac{a(N\!-\!a)}{N}
\sum_{b=1}^{N-a} \frac{1}{a\!+\!b} (-1)^b
\binom{N\!-\!a\!-\!1}{b\!-\!1}.
\end{equation}
To this end, we apply the identities
\begin{equation}\label{beta}
\sum_{j=0}^N \frac{1}{j+M} (-1)^j \binom{N}{j} = B(M,N+1), \; M \ge 1
\end{equation}
which follow from the formula
$$
B(M,N\!+\!1) = \int_0^1 x^{M-1} (1-x)^N dx
$$
for the Beta function by expanding $(1-x)^N$ as a polynomial in $x$
and integrating term by term. \eqref{beta} implies
$$
\sum_{b=1}^{N-a} \frac{1}{a+b} (-1)^b \binom{N\!-\!a\!-\!1}{b\!-\!1}
= - B(a\!+\!1,N\!-\!a) = - \frac{a!(N\!-\!a\!-\!1)!}{(N\!-\!1)!}.
$$
Now a calculation shows that
$$
\binom{N\!-\!1}{a\!-\!1} \Big/ \binom{N}{a}^2 \frac{a(N\!-\!a)}{N} =
\frac{a!(N\!-\!a\!-\!1)!}{N!}.
$$
This proves \eqref{kernel}.
\end{proof}

The relation \eqref{strong} will be substantially refined in Section \ref{spheres}.

%%%%%%%%%%%%%%%%%%%%%%%%%%%%%%%%%%%%%%%%%%%%%%%%%%%%%%%%%%%%%%%%%%%%%%%%%%%%%
\section{Conformal variational formulas}\label{inf-var}

In the present section, we prove conformal variational formulas for
the operators
$$
\M_{2N} = P_{2N} - \P_{2N} \quad \mbox{and} \quad \T_{n/2-1},
$$
where
\begin{equation}\label{second}
\T_{N} \st \delta (\Rho^N \# d).
\end{equation}
In \eqref{second}, the notation does not distinguish between the
symmetric bilinear form $\Rho$ and the induced linear operator on
$TM$. We use this convention throughout.

The first result concerns the primary parts $\P_{2N}$.

\begin{thm}\label{c-cv} On $M^n$,
\begin{multline}\label{c-cv-gen}
(d/dt)|_0 \big(e^{(\f+N)t\varphi} \P_{2N}(e^{2t\varphi}g)
e^{-(\f-N)t\varphi} \big) \\= \sum_{j=1}^{N-1}
\binom{N\!-\!1}{j\!-\!1}^2 (N\!-\!j)
\left[\M_{2j}(g),[\M_{2N-2j}(g),\varphi]\right].
\end{multline}
Here $\varphi$ is regarded as a multiplication operator.
\end{thm}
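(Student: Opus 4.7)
The plan is to reduce \eqref{c-cv-gen} to explicit combinatorial identities on the coefficients $m_I$ by computing both sides on each of the monomial types $\varphi P_{2I}$, $P_{2I}\varphi$, and $P_{2J}\varphi P_{2K}$ in which they expand.

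First I would apply the intertwining \eqref{conf-cov}, rewritten as $P_{2J}(e^{2t\varphi}g) = e^{-(\f+J)t\varphi}P_{2J}(g)e^{(\f-J)t\varphi}$, factor by factor in the composition $P_{2I}(e^{2t\varphi}g)$. The exponentials between neighbouring factors coalesce to $e^{-(I_k+I_{k+1})t\varphi}$, and together with the outer conjugation this yields
\begin{equation*}
e^{(\f+N)t\varphi}P_{2I}(e^{2t\varphi}g)e^{-(\f-N)t\varphi} = e^{(N-I_1)t\varphi}P_{2I_1}e^{-(I_1+I_2)t\varphi}P_{2I_2}\cdots P_{2I_r}e^{(N-I_r)t\varphi}.
\end{equation*}
Differentiating at $t=0$ yields
\begin{equation*}
V(I) \st (N{-}I_1)\varphi P_{2I} + (N{-}I_r)P_{2I}\varphi - \sum_{k=1}^{r-1}(I_k{+}I_{k+1})\,P_{2(I_1,\ldots,I_k)}\varphi P_{2(I_{k+1},\ldots,I_r)},
\end{equation*}
and since \eqref{deco} with $m_{(N)}=1$ gives $\P_{2N} = -\sum_{r(I)\ge 2}m_I P_{2I}$, the left-hand side of \eqref{c-cv-gen} equals $-\sum_{r(I)\ge 2} m_I V(I)$.

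Expanding $[\M_{2j},[\M_{2N-2j},\varphi]] = \sum_{|J|=j,\,|K|=N-j}m_J m_K\bigl(P_{2JK}\varphi - P_{2J}\varphi P_{2K} - P_{2K}\varphi P_{2J} + \varphi P_{2KJ}\bigr)$ and matching coefficients reduces the theorem, with $s_k = I_1+\cdots+I_k$, to three identities:
\begin{align*}
m_I(N-I_1) &= -\sum_{k=1}^{r-1} m_{(I_1,\ldots,I_k)}m_{(I_{k+1},\ldots,I_r)}\binom{N{-}1}{s_k}^{\!2}s_k,\\
m_I(N-I_r) &= -\sum_{k=1}^{r-1} m_{(I_1,\ldots,I_k)}m_{(I_{k+1},\ldots,I_r)}\binom{N{-}1}{s_k{-}1}^{\!2}(N{-}s_k),\\
m_I(I_k{+}I_{k+1}) &= -m_{(I_1,\ldots,I_k)}m_{(I_{k+1},\ldots,I_r)}\left[\binom{N{-}1}{s_k{-}1}^{\!2}(N{-}s_k)+\binom{N{-}1}{s_k}^{\!2}s_k\right].
\end{align*}
The third identity combines the contributions of the two middle monomials $P_{2J}\varphi P_{2K}$ and $P_{2K}\varphi P_{2J}$ under the swap $J\leftrightarrow K$.

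The crucial tool for all three is the splitting formula that follows directly from \eqref{m-form}: setting $p=s_k$ and $q=N-s_k$,
\begin{equation*}
\frac{m_{(I_1,\ldots,I_r)}}{m_{(I_1,\ldots,I_k)}\,m_{(I_{k+1},\ldots,I_r)}} = -\frac{N!(N-1)!}{p!(p-1)!q!(q-1)!\,(I_k+I_{k+1})} = -\frac{q\binom{N}{p}\binom{N-1}{p-1}}{I_k+I_{k+1}}.
\end{equation*}
Using this together with the symmetries $\binom{N-1}{q-1}=\binom{N-1}{p}$ and $p\binom{N-1}{p}=q\binom{N-1}{p-1}$, Pascal's rule $\binom{N}{p}=\binom{N-1}{p-1}+\binom{N-1}{p}$ collapses the third identity in one line. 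The first two reduce, after dividing by $m_I$, to the purely additive identities
\begin{equation*}
N(N-I_1) = \sum_{k=1}^{r-1}(I_k{+}I_{k+1})(N{-}s_k), \qquad N(N-I_r) = \sum_{k=1}^{r-1}(I_k{+}I_{k+1})s_k,
\end{equation*}
which follow by telescoping: writing $e_j = N-s_j$ (so $e_0=N$, $e_r=0$) and $I_k{+}I_{k+1} = e_{k-1}-e_{k+1}$ one has $\sum_{k=1}^{r-1}(e_{k-1}-e_{k+1})e_k = e_0e_1 - e_{r-1}e_r = N(N-I_1)$, and the dual telescoping (with $s_{k+1}-s_{k-1}$) yields $s_rs_{r-1}=N(N-I_r)$.

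The main obstacle is the bookkeeping step in which one correctly pairs the four monomial types from the double-commutator expansion with the three types produced by $V(I)$ --- in particular recognising that $-P_{2J}\varphi P_{2K}$ and $-P_{2K}\varphi P_{2J}$ both feed the same middle monomial, which is precisely what generates the two-term bracket in the third identity. Once this correspondence is set up and the splitting formula extracted from \eqref{m-form}, the remaining algebra is a short exercise.
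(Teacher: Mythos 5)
Your proposal is correct and takes essentially the same route as the paper: conjugating each factor of $P_{2I}$ via \eqref{conf-cov} to obtain your $V(I)$ (the paper's \eqref{lhs}), expanding the double commutators, and matching the coefficients of the three monomial types, which reduces to the same identities for the $m_I$ verified from \eqref{m-form}. The only cosmetic differences are that the paper handles the $\varphi P_{2I}$-terms via the reversal symmetry $m_{I^{-1}}=m_I$ and proves the additive identity $\sum_k s_k(I_k+I_{k+1})=N(N-I_r)$ by induction, where you use direct (dual) telescoping, and it reduces the middle-term identity to $\tfrac1N(s_k+(N-s_k))$ rather than invoking Pascal's rule.
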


\eqref{c-cv-gen} holds true whenever both sides are defined. Thus,
for even $n$ and general metrics, we assume that $2N \le n$.

In the critical case, Theorem \ref{c-cv} states that
\begin{equation*}
(d/dt)|_0 \left(e^{n t\varphi} \P_n(e^{2t\varphi}g) \right) =
\sum_{j=1}^{\f-1} \binom{\f\!-\!1}{j\!-\!1}^2 (\f\!-\!j)
\left[\M_{2j}(g),\left[\M_{n-2j}(g),\varphi\right]\right].
\end{equation*}

\begin{proof} The proof rests on the transformation laws
\eqref{conf-cov}. The right-hand side of \eqref{c-cv-gen} is a
weighted sum of terms of the form
\begin{equation*}
\M_{2j} \circ \M_{2N-2j} \circ \varphi - \M_{2j} \circ \varphi \circ
\M_{2N-2j} - \M_{2N-2j} \circ \varphi \circ \M_{2j} + \varphi \circ
\M_{2N-2j} \circ \M_{2j}.
\end{equation*}
On the left-hand side of \eqref{c-cv-gen}, the term $P_{2I}$ with
$I=(I_1,I_2,\dots,I_r)$ in the sum $\P_{2N}$ induces a constant
multiple of the contribution
\begin{multline}\label{lhs}
(N\!-\!I_1) \varphi \circ P_{2I} - (I_1\!+\!I_2) P_{2I_1} \circ
\varphi \circ P_{2I_2} \cdots P_{2I_r} \\
- \cdots - (I_{r-1}\!+\!I_r) P_{2I_1} \cdots P_{2I_{r-1}} \circ
\varphi \circ P_{2I_r} + (N\!-\!I_r) P_{2I} \circ \varphi.
\end{multline}
For the term $P_{2I} \circ \varphi$, the claim is
\begin{multline}\label{c-last}
-(N\!-\!I_r) m_I = \binom{N\!-\!1}{I_1\!-\!1}^2
(N\!-\!I_1) \; m_{(I_1)} m_{(I_2,\dots,I_r)} \\
+ \binom{N\!-\!1}{I_1\!+\!I_2\!-\!1}^2
(N\!-\!I_1\!-\!I_2) \; m_{(I_1,I_2)} m_{(I_3,\dots,I_r)} \\
+ \cdots + \binom{N\!-\!1}{I_1\!+\!I_2\!+ \cdots +\!I_{r-1}\!-\!1}^2
(N\!-\!I_1\!-\!I_2-\cdots-I_{r-1}) \; m_{(I_1,I_2,\dots,I_{r-1})}
m_{(I_r)}.
\end{multline}
In order to prove this identity, we use the explicit formula for the
coefficients $m_I$ (see \eqref{m-form}) to write the terms in the
sum as multiples of $m_I$. We find
\begin{multline}\label{comb}
-\frac{1}{N} \Big[ I_1(I_1+I_2) + (I_1+I_2)(I_2+I_3) +
(I_1+I_2+I_3)(I_3+I_4) \\ + \cdots +
(I_1+I_2+\cdots+I_{r-1})(I_{r-1}+I_r)\Big] m_I.
\end{multline}
Now the relation
\begin{equation*}
I_1(I_1+I_2) + \cdots + (I_1+I_2+\cdots+I_{r-1})(I_{r-1}+I_r)  =
(I_1+\cdots+I_r)(I_1+\cdots+I_{r-1})
\end{equation*}
(which follows by induction) implies that in \eqref{comb} the sum in
brackets equals $N(N-I_r)$ if $|I|=N$. Thus, \eqref{comb} equals
$-(N-I_r)m_I$. This proves the assertion.

Next, for the term $\varphi \circ P_{2I}$ with $|I|=N$, the claim is
\begin{multline*}
-(N\!-\!I_1) m_I = \binom{N\!-\!1}{I_2\!+\cdots+\!I_r\!-\!1}^2
(N\!-\!I_2\!-\cdots-I_r) \; m_{(I_1)} m_{(I_2,I_2,\dots,I_{r})} \\
+ \cdots + \binom{N\!-\!1}{I_r\!-\!1}^2 (N\!-\!I_r) \;
m_{(I_1,\dots,I_{r-1})} m_{(I_r)}.
\end{multline*}
This identity follows by applying \eqref{c-last} to the inverse
composition $I^{-1}$ of $I$ and using the relations $m_{I^{-1}} =
m_I$ for all compositions $I$ (see Corollary \ref{self}).

It remains to prove the corresponding identities for the
coefficients of the terms
$$
P_{2I_1} \cdots P_{2I_a} \circ \varphi \circ P_{2I_{a+1}} \dots
P_{2I_r}.
$$
In that case, the claim is
\begin{multline*}
-(I_a+I_{a+1}) m_I = \Big[
\binom{N\!-\!1}{I_1\!+\cdots+\!I_a\!-\!1}^2
(N\!-\!I_1\!-\cdots-I_a) \\
+ \binom{N\!-\!1}{I_{a+1}\!+\cdots+\!I_r\!-\!1}^2
(N\!-\!I_{a+1}\!-\cdots-I_r) \Big] m_{(I_1,\dots,I_a)}
m_{(I_{a+1},\dots,I_r)}.
\end{multline*}
By \eqref{m-form}, the right-hand side reduces to
$$
-\frac{1}{N} (I_a+I_{a+1}) [(I_1+\cdots+I_a) + (N-I_1-\cdots-I_a)]
m_I,
$$
i.e., to $-(I_a + I_{a+1)}) m_I$. This completes the proof.
\end{proof}

\begin{corr}\label{nice} In the situation of Theorem \ref{c-cv},
\begin{equation*}
(d/dt)|_0 \left(e^{(\f+N)t\varphi} \V_{2N}(e^{2t\varphi}g)
e^{-(\f-N)t\varphi} \right) = \sum_{j=1}^{N-1} \frac{1}{N-j}
\left[\V_{2j}(g),[\V_{2N-2j}(g),\varphi]\right],
\end{equation*}
where
\begin{equation}\label{V-def}
\V_{2k} \st - \frac{\M_{2k}}{(k-1)! (k-1)!}.
\end{equation}
\end{corr}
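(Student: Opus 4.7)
The plan is to reduce Corollary~\ref{nice} directly to Theorem~\ref{c-cv} by exploiting the definition $\M_{2N} = P_{2N} - \P_{2N}$ together with the conformal covariance of $P_{2N}$, and then to carry out a purely combinatorial rescaling to match the $\V_{2k}$ normalization.

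First, I would note that the transformation law \eqref{conf-cov} is equivalent to the statement that the conjugated family $e^{(\f+N)t\varphi} P_{2N}(e^{2t\varphi}g) e^{-(\f-N)t\varphi}$ equals $P_{2N}(g)$ for every $t$, hence has vanishing derivative at $t=0$. Subtracting this null contribution from Theorem~\ref{c-cv} gives, via $\M_{2N} = P_{2N} - \P_{2N}$,
\begin{equation*}
(d/dt)|_0 \bigl(e^{(\f+N)t\varphi} \M_{2N}(e^{2t\varphi}g) e^{-(\f-N)t\varphi}\bigr) = - \sum_{j=1}^{N-1} \binom{N\!-\!1}{j\!-\!1}^2 (N\!-\!j) \bigl[\M_{2j}(g),[\M_{2N-2j}(g),\varphi]\bigr].
\end{equation*}

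Next I would insert the rescaling \eqref{V-def}. Since $\varphi$ is not rescaled and each commutator is bilinear in its two operator slots, two sign factors from $\V_{2k} = -\M_{2k}/((k-1)!)^2$ cancel and one obtains
\begin{equation*}
[\M_{2j},[\M_{2N-2j},\varphi]] = ((j\!-\!1)!)^2 ((N\!-\!j\!-\!1)!)^2 \, [\V_{2j},[\V_{2N-2j},\varphi]],
\end{equation*}
while the left-hand side of the displayed formula acquires an overall factor $-1/((N-1)!)^2$ (the sign from $\V_{2N}=-\M_{2N}/((N-1)!)^2$ combines with the minus sign in the display to yield a plus).

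Finally, I would perform the bookkeeping. Writing $\binom{N-1}{j-1}^2 = ((N-1)!)^2 / \bigl(((j-1)!)^2 ((N-j)!)^2\bigr)$, the total numerical prefactor multiplying $[\V_{2j},[\V_{2N-2j},\varphi]]$ collapses to
\begin{equation*}
\frac{(N-j)\, ((j-1)!)^2 ((N-j-1)!)^2}{((j-1)!)^2 ((N-j)!)^2} = \frac{(N-j)}{(N-j)^2} = \frac{1}{N-j},
\end{equation*}
which is exactly the asserted coefficient. The only real ``step'' is this combinatorial identity; no fresh analytic input beyond Theorem~\ref{c-cv} is required, and the main thing to be careful about is tracking the interplay between the two minus signs in $\V_{2j}\V_{2N-2j}$ (which cancel) and the single minus sign in $\V_{2N}$ (which does not), together with the squared factorials produced by the bilinearity of the iterated commutator.
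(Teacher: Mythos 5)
Your argument is correct and is exactly the paper's: the paper proves the corollary by noting that, via $\M_{2N}=P_{2N}-\P_{2N}$ and the conformal covariance \eqref{conf-cov} of $P_{2N}$, the statement is equivalent to Theorem \ref{c-cv}, with the rescaling by $((k-1)!)^2$ left implicit. Your sign tracking and the collapse of the coefficient $\binom{N-1}{j-1}^2(N-j)\,((j-1)!)^2((N-j-1)!)^2/((N-1)!)^2$ to $1/(N-j)$ are accurate, so the proposal just spells out the bookkeeping the paper omits.
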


\begin{proof} By $\M_{2N} = P_{2N} - \P_{2N}$ and the conformal
covariance \eqref{conf-cov} of $P_{2N}$, the assertion is equivalent
to Theorem \ref{c-cv}. \end{proof}

The self-adjointness of $\V_{2N}$ (Corollary \ref{self}) implies the
self-adjointness of the conformal variation
$$
(d/dt)|_0 \big(e^{(\f+N)t\varphi} \V_{2N}(e^{2t\varphi}g)
e^{-(\f-N)t\varphi} \big).
$$
In fact, for $u,v \in C^\infty(M)$ with compact support,
\begin{align*}
& \int_M (d/dt)|_0 \left( e^{(\f+N)t\varphi}
\V_{2N}(e^{2t\varphi}g)(e^{-(\f-N)t\varphi} u) \right) \bar{v} \, vol(g) \\
& = (d/dt)|_0 \int_M e^{(-\f+N)t\varphi} \V_{2N}(e^{2t\varphi}g)
(e^{-(\f-N)t\varphi} u) \bar{v} \, vol(e^{2t\varphi} g) \\
& = (d/dt)|_0 \int_M u \overline{e^{-(\f-N)t\varphi}
\V_{2N}(e^{2t\varphi}g)(e^{-(\f-N) t\varphi} v)} \, vol(e^{2t\varphi} g) \\
& = \int_M u \overline{ (d/dt)|_0 \left( e^{(\f+N)t\varphi}
\V_{2N}(e^{2t\varphi}g) (e^{-(\f-N)t\varphi})v \right)} \, vol(g).
\end{align*}
Corollary \ref{nice} confirms this observation for $\V_{2N}$ by
using the self-adjointness of all lower order $\V_{2M}$, $M < N$.

The second conformal variational formula concerns the operator
$\T_{n/2-1}$ on manifolds $M^n$ of even dimension. Note that $\T_0 =
-\Delta$.

\begin{thm}\label{comm-2} For a locally conformally flat metric $g$,
\begin{equation}\label{var-t}
n (d/dt)|_0 \left(e^{nt\varphi} \T_{\f-1}(e^{2t\varphi}g)\right) =
\sum_{j=1}^{\f-1} j [\T_{j-1}(g),[\T_{\f-1-j}(g),\varphi]]^0.
\end{equation}
\end{thm}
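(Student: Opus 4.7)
The plan is a direct computation of both sides, matching term by term as a combinatorial identity in the manner of Theorem \ref{c-cv}. Under $\hat g = e^{2t\varphi}g$ the Schouten tensor satisfies $\Rho(\hat g) = \Rho(g) - t\nabla^2\varphi + O(t^2)$, raising an index via $g^{-1}$ contributes a factor $e^{-2t\varphi}$, and on $1$-forms $\delta_{\hat g}\omega = e^{-nt\varphi}\delta_g(e^{(n-2)t\varphi}\omega)$. With $\Rho^\sharp \st g^{-1}\Rho$, $C \st (\nabla^2\varphi)^\sharp$, and $N \st \f - 1$, these combine to give
\[
e^{nt\varphi}\T_N(\hat g)\,u = \delta_g\!\bigl(e^{(n-2-2N)t\varphi}A(t)^N\,du\bigr),
\]
with $A(t) = \Rho^\sharp + t(-2\varphi\Rho^\sharp - C) + O(t^2)$. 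In the critical case $n-2-2N=0$; the $\varphi\Rho^\sharp$-contributions from $A(t)^N$ telescope against the exponential factor, and differentiation at $t=0$ yields
\[
(d/dt)|_0\bigl(e^{nt\varphi}\T_{\f-1}(e^{2t\varphi}g)\bigr) = -\sum_{k=0}^{N-1} E_k, \qquad E_k \st \delta_g\!\bigl((\Rho^\sharp)^k C(\Rho^\sharp)^{N-1-k}\# d\bigr).
\]
The LHS of \eqref{var-t} is thus $-n\sum_{k=0}^{N-1} E_k$.

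For the RHS, the key input is the pointwise identity
\[
[\T_a,\varphi]\,u = -2\,\Rho^a(d\varphi,du) + u\,\T_a(\varphi),
\]
obtained from the Leibniz rule together with the symmetry of $\Rho^a$. Expanding $[\T_{j-1},[\T_{N-j},\varphi]]$ via this identity and using $\Rho^{j-1}\Rho^{N-j} = \Rho^{N-1}$ (powers of $\Rho^\sharp$ commute), its second-order part assembles into a linear combination of the $E_k$, with $k$ controlled by the placement of $C$ within the product $\Rho^{N-1}$; the operation $[\cdot]^0$ discards the zero-order residue $\T_{j-1}\T_{N-j}(\varphi)\cdot\mathrm{Id}$. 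The LCF hypothesis enters essentially here: the vanishing Cotton tensor, $\nabla_i\Rho_{jk}=\nabla_j\Rho_{ik}$, permits commuting $\nabla$ past factors of $\Rho^\sharp$, so that the subprincipal $(\nabla\Rho^a)(d\varphi)$ terms arising in the double commutator reorganize into $E_k$-contributions rather than producing additional curvature obstructions.

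The proof then reduces to a combinatorial identity: for each $k \in \{0,\ldots,N-1\}$, the coefficient of $E_k$ in $\sum_{j=1}^{N} j\,[\T_{j-1},[\T_{N-j},\varphi]]^0$ must equal $-n$. The main obstacle is this bookkeeping, which is analogous to (but simpler than) the combinatorial identity \eqref{strong} used in Lemma \ref{LPP}: one must identify, for each fixed position $k$, which $(j-1,N-j)$-splits contribute to $E_k$ and with what weighted sign, then sum against the weight $j$ to recover the universal constant $n = 2N+2$. The resulting telescoping mirrors the structure established in the proof of Theorem \ref{c-cv}, so although the explicit verification is intricate, the strategy is parallel and elementary.
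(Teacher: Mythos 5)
Your computation of the left-hand side is correct and in fact exact: in the critical weight the conformal factors cancel and one gets $n\,(d/dt)|_0\bigl(e^{nt\varphi}\T_{\f-1}(e^{2t\varphi}g)\bigr)=-n\sum_{k=0}^{N-1}E_k$ with $E_k=\delta\bigl(\Rho^{k}\Hess(\varphi)\Rho^{N-1-k}\#d\bigr)$, $N=\f-1$; and your identity $[\T_a,\varphi]u=-2\Rho^a(d\varphi,du)+u\,\T_a(\varphi)$ is right. The gap is on the right-hand side. The principal part of $-n\sum_kE_k$ is built purely from $\Rho^{k}\Hess(\varphi)\Rho^{N-1-k}$ contracted against $\Hess(u)$; it contains no $\nabla\Rho\otimes d\varphi$ terms. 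The double commutators, however, do produce such terms in the coefficient of $\Hess(u)$ (the derivative falling on a power of $\Rho$), so these terms must \emph{cancel} outright — they cannot ``reorganize into $E_k$-contributions'' as you assert. That cancellation is the actual content of the theorem and the only place the hypothesis $\C=0$ enters: one rewrites the offending terms via the Cotton tensor $\Y$ (which vanishes in the locally conformally flat case) and annihilates the remainder by pairing the indices $k$ and $\f-k-l$ and using the symmetry of $\Hess^{ab}(u)$. This is a tensorial cancellation, not a binomial identity of the type \eqref{strong}; your proposal declares it routine bookkeeping ``parallel to Theorem \ref{c-cv}'' without carrying it out and with the mechanism misdescribed, so the essential step is missing. (The genuinely combinatorial part is trivial: the $\Hess(\varphi)$-terms match by the pairing $j\leftrightarrow N+1-j$, since $\Rho^{j-1}\Hess(\varphi)\Rho^{N-j}$ and its reverse have the same contraction against $\Hess(u)$, giving $4j+4(N+1-j)=2n$; the literal coefficient of a fixed ordered $E_k$ is $-4(k+1)$, not $-n$.)

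A second omission: matching only the second-order parts proves the identity only because both sides are formally self-adjoint second-order operators annihilating constants, hence determined by their principal parts; this reduction, which the paper invokes explicitly, never appears in your argument. Without it, your claim that the right-hand side ``assembles into a linear combination of the $E_k$'' would also have to be verified for the first-order-in-$u$ terms (which involve $d\varphi$ against two derivatives of $\Rho$, $\Hess(\varphi)$ against $\nabla\Rho$, and $\nabla\Hess(\varphi)$ against $du$), a substantially more delicate verification that your commutator identity for $[\T_a,\varphi]$ alone does not settle. Adding the self-adjointness reduction and then actually performing the Cotton-tensor cancellation would essentially reproduce the paper's proof.
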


The proof of Theorem \ref{comm-2} will also show that for general
metrics both sides of \eqref{var-t} differ by a second-order
operator the main part of which is given by the sum of the terms
\begin{equation}\label{non-flat}
4k (\Rho^{k-1})_a^t (\Rho^{l-1})^r_c (\Rho^{\f-1-k-l})^s_b \Y_{trs}
\varphi^c \Hess^{ab}(u)
\end{equation}
for all integers $l,k \ge 1$ such that $l+k \le \f-1$. Here $\Y$
denotes the Cotton tensor
\begin{equation}\label{cotton}
\Y(X,Y,Z) = \nabla_X(\Rho)(Y,Z) - \nabla_Y(\Rho)(X,Z).
\end{equation}
We recall that $\Y$ vanishes if the Weyl tensor $\C$ vanishes. This
result will be used in Section \ref{n-flat}.

\begin{proof} The assertion relates two self-adjoint second-order
differential operators which annihilate constants. Hence it suffices
to prove that the main parts of both operators coincide. Now a
calculation (using $\delta(T \# d) = - (T,\Hess) + (\delta(T),d)$
for any symmetric bilinear form $T$) shows that the main part of the
operator
$$
\left[\T_p,[\T_q,\varphi]\right], \; p, q \ge 0
$$
is of the form
\begin{multline}\label{d-comm}
4 (\Rho^p)_{ij} (\Rho^q)^{rs} \Hess^i_r(\varphi) \Hess^j_s(\varphi)
\\ + 4 (\Rho^p)_{ij} \nabla^i (\Rho^q)^{rs} \varphi_r \Hess^j_s(u) -
2 (\Rho^q)_{ij} \nabla^j(\Rho^p)_{rs} \varphi^i \Hess^{rs}(u).
\end{multline}
Hence the right-hand side of \eqref{var-t} equals
$$
4 \sum_{k=1}^{\f-1} k (\Rho^{k-1})_{ij} (\Rho^{\f-1-k})^{rs}
\Hess_r^j(\varphi) \Hess_s^j(u),
$$
up to terms with first order derivatives of $\Rho$. The latter sum
simplifies to
\begin{equation}\label{no-deriv}
n \sum_{k=1}^{\f-1} (\Rho^{k-1})_{ij} (\Rho^{\f-1-k})^{rs}
\Hess_r^i(\varphi) \Hess_s^j(u).
\end{equation}
We compare \eqref{no-deriv} with the main part of the left-hand side
of \eqref{var-t}. It is given by
\begin{multline*}
-\sum_{k=1}^{\f-1} (d/dt)|_0 \left((\Rho^{k-1})_{ir} (\Rho^r_s \!-\!
t \Hess^r_s(\varphi)) (\Rho^{\f-1-k})_j^s \right) \Hess^{ij}(u) \\
= \sum_{k=1}^{\f-1} (\Rho^{k-1})_{ij} (\Rho^{\f-1-k})^{rs}
\Hess^j_s(\varphi) \Hess^i_r(u).
\end{multline*}
Therefore, it only remains to prove that, in the locally conformally
flat case, i.e., if $\C=0$, the terms with derivatives of $\Rho$
cancel. By \eqref{d-comm}, these contribute
\begin{multline*}
4 \sum_{k=1}^{\f-1} k (\Rho^{k-1})_{ia} \nabla^i
(\Rho^{\f-1-k})^{cb} \varphi^c \Hess^{ab}(u) \\[-4mm] - 2
\sum_{k=1}^{\f-1} k (\Rho^{\f-1-k})_{cj} \nabla^j(\Rho^{k-1})_{ab}
\varphi^c \Hess^{ab}(u).
\end{multline*}
Reordering the second sum yields
\begin{multline*}
\left( 4 \sum_{k=1}^{\f-2} k \Rho^{k-1}_{ia}
\nabla^i(\Rho^{\f-1-k})^{cb} - 2 \sum_{k=1}^{\f-2}
\left(\f\!-\!k\right) (\Rho^{k-1})_{ic} \nabla^i
(\Rho^{\f-1-k})^{ab} \right) \varphi^c \Hess^{ab}(u).
\end{multline*}
Now we group the terms in this sum as follows. The product rule
turns the derivatives of powers of $\Rho$ into sums of products
which contain one derivative of $\Rho$. We match the resulting sum
in the $k^{\text{th}}$ term in the first sum with the sum of the
respective $k^{\text{th}}$ terms in the individual contributions in
the second sum. This gives
\begin{multline*}
\sum_{k=1}^{\f-2} \Big\{ \sum_{l=1}^{\f-1-k} 4k (\Rho^{k-1})_{ia}
(\Rho^{l-1})_{cr} \nabla^i(\Rho)^r_s (\Rho^{\f-1-k-l})^s_b \\
- 2 \sum_{l=1}^{\f-1-k} (\f-l) (\Rho^{l-1})_{ic} (\Rho^{k-1})_{ar}
\nabla^i (\Rho)_s^r (\Rho^{\f-1-k-l})^s_b \Big\}.
\end{multline*}
Interchanging the roles of $i$ and $r$ in the second sum, yields
\begin{multline*}
\sum_{k=1}^{\f-2} \Big\{ \sum_{l=1}^{\f-1-k} 4k (\Rho^{k-1})_{ia}
(\Rho^{l-1})_{cr} \nabla^i(\Rho)^r_s (\Rho^{\f-1-k-l})^s_b \\
- 2 \sum_{l=1}^{\f-1-k} (\f-l) (\Rho^{k-1})_{ai} (\Rho^{l-1})_{rc}
\nabla^r (\Rho)_s^i (\Rho^{\f-1-k-l})^s_b \Big\}.
\end{multline*}
We rewrite this sum as
\begin{multline*}
\sum_{k=1}^{\f-2} \Big\{ \sum_{l=1}^{\f-1-k} 4k (\Rho^{k-1})_{ia}
(\Rho^{l-1})_{cr} \left[ \nabla^i(\Rho)_s^r - \nabla^r(\Rho)_s^i
\right] (\Rho^{\f-1-k-l})^s_b \\
- 2 \sum_{l=1}^{\f-1-k} \left(\f-l-2k\right) (\Rho^{k-1})_{ia}
(\Rho^{l-1})_{cr} \nabla^r (\Rho)_s^i (\Rho^{\f-1-k-l})^s_b \Big\},
\end{multline*}
i.e., as
\begin{multline*}
4 \sum_{k=1}^{\f-2} \sum_{l=1}^{\f-1-k} k (\Rho^{k-1})_a^i
(\Rho^{l-1})_c^r \Y_{irs} (\Rho^{\f-1-k-l})^s_b \\
- 2 \sum_{k=1}^{\f-2} \sum_{l=1}^{\f-1-k} \left(\f-l-2k\right)
(\Rho^{k-1})_a^i (\Rho^{l-1})_c^r \nabla^r (\Rho)_{is}
(\Rho^{\f-1-k-l})^s_b.
\end{multline*}
In the latter double sum we match, for given $l$, the terms for $k$
and $k'=\f-k-l$ (note that both numbers coincide iff the coefficient
$\f-l-2k$ vanishes). These terms are given by
\begin{multline*}
-2 \left(\f-l-2k\right) (\Rho^{k-1})_a^i (\Rho^{l-1})_c^r
(\Rho^{\f-1-k-l})^s_b \nabla^r (\Rho)_{is} \\
+ 2\left(\f-l-2k\right) (\Rho^{\f-1-k-l})^i_a (\Rho^{l-1})_c^r
(\Rho^{k-1})_b^i \nabla^r (\Rho)_{is}.
\end{multline*}
By summation against $\Hess^{ab}(u)$, these sums vanish. This yields
the explicit formula \eqref{non-flat} for those contributions to the
main part of the right-hand side of \eqref{var-t} which contain
derivatives of $\Rho$.
\end{proof}

The same argument as for $\M_{2N}$ shows that the conformal
variation of $\T_{n/2-1}$ is self-adjoint. Using the
self-adjointness of all $\T_{2M}$, $M\le \f-1$, Theorem \ref{comm-2}
confirms this observation for locally conformally flat metrics.

%%%%%%%%%%%%%%%%%%%%%%%%%%%%%%%%%%%%%%%%%%%%%%%%%%%%%%%%%%%%%%%%%%%%%%%%%
\section{Universal recursive formulas for GJMS-operators}\label{mc}

In the present section, we work in the locally conformally flat
category, i.e., we assume that the Weyl tensor $\C$ vanishes. In
this case, the Fefferman-Graham ambient metric terminates at the
third term \cite{FG2}, and the existence of GJMS-operators is not
obstructed. For comments on the general case we refer to Section
\ref{n-flat}.

The following conjecture states a recursive formula for
GJMS-operators $P_{2N}$.

\begin{conj}[{\bf Universal recursive formulas for GJMS-operators}]
\label{formula} Let $(M,g)$ be a locally conformally flat Riemannian
manifold of dimension $n \ge 3$. Then for $N \ge 1$,
\begin{equation}\label{universal}
\M_{2N}^0 = - c_N \delta (\Rho^{N-1}\# d),
\end{equation}
where $c_N = 2^{N-1} N!(N-1)!$. Here $[\cdot]^0$ denotes the
non-constant part of the respective operator, and $\delta$ is the
negative divergence.
\end{conj}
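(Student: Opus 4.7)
The plan is to proceed by induction on $N$. The base case $N=1$ is immediate: $\M_2=P_2=\Delta-(\f-1)\J$, so $\M_2^0=\Delta=-\delta d=-c_1\,\delta(\Rho^0\#d)$. For the inductive step, assume $\M_{2j}^0=-c_j\,\T_{j-1}$ for all $1\le j<N$. The conformal covariance law \eqref{conf-cov} for $P_{2N}$, differentiated in $t$ at $0$, shows that the conjugated variation of $P_{2N}$ vanishes identically; combined with Theorem \ref{c-cv} applied to $\P_{2N}=P_{2N}-\M_{2N}$, this yields
\begin{align*}
(d/dt)|_0\bigl(e^{(\f+N)t\varphi}\M_{2N}(e^{2t\varphi}g)e^{-(\f-N)t\varphi}\bigr)=-\sum_{j=1}^{N-1}\binom{N\!-\!1}{j\!-\!1}^{\!2}(N\!-\!j)\bigl[\M_{2j},[\M_{2N-2j},\varphi]\bigr].
\end{align*}
Since $\varphi$ commutes with the scalar constant term of each $\M_{2k}$, the nested commutators only depend on the non-constant parts; the inductive hypothesis then rewrites the right-hand side as a universal weighted sum of $\bigl[\T_{j-1},[\T_{N-1-j},\varphi]\bigr]$ with coefficients $\binom{N-1}{j-1}^{\!2}(N-j)\,c_jc_{N-j}$.

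Separately, one computes the conjugated variation of $\T_{N-1}$. In the critical case $N=\f$ the right weight $\f-N=0$ trivializes the right conjugation, so Theorem \ref{comm-2} supplies exactly the required formula. For the subcritical range $2N<n$, the proof of Theorem \ref{comm-2} is local and uses the vanishing of the Cotton tensor \eqref{cotton} only at the step where derivative-of-$\Rho$ terms are shown to drop out; it thus extends verbatim, with weight $n$ replaced by $\f+N$ on the left and $\f-N$ on the right, to a formula for the variation of $\T_{N-1}$ as a weighted sum of $\bigl[\T_{j-1},[\T_{N-1-j},\varphi]\bigr]^0$. Matching with the variation of $-c_N\T_{N-1}$ then reduces to the combinatorial identity
$$\binom{N\!-\!1}{j\!-\!1}^{\!2}(N\!-\!j)\,c_jc_{N-j}=\frac{c_N}{2N}\,j,\qquad 1\le j\le N-1,$$
which, upon substitution of $c_k=2^{k-1}k!(k-1)!$, collapses on both sides to $2^{N-2}((N-1)!)^2 j$ and is thus verified directly. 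In particular, in the critical case $N=\f$ the matching is unconditional, since it uses only Theorem \ref{comm-2} as stated.

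Once the conformal variations agree, the difference $\mathcal E_{2N}\st \M_{2N}^0+c_N\T_{N-1}$ is a natural differential operator whose conjugated variation vanishes identically in $\varphi$, so $\mathcal E_{2N}$ is itself conformally covariant of bi-weight $(\f-N,\f+N)$. Since $P_{2N}$ and $\P_{2N}$ share the principal symbol $|\xi|^{2N}$ by Lemma \ref{LPP}, $\mathcal E_{2N}$ has trivial leading symbol and hence order $\le 2N-1$. The hardest part --- and the reason the statement is stated only as a conjecture --- is to pass from conformal covariance of $\mathcal E_{2N}$ to $\mathcal E_{2N}=0$: this requires a uniqueness theorem excluding nontrivial conformally covariant natural operators of bi-weight $(\f-N,\f+N)$ built from $\Rho$ and its covariant derivatives in the LCF category. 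A pragmatic substitute is to anchor the induction on test metrics such as $\S^n$ (Section \ref{spheres}), the pseudo-spheres $\S^{q,p}$ (Section \ref{ps-sphere}), or the class of Section \ref{mix}, each of which handles a single conformal class but does not by itself imply the universal statement. A uniform proof appears to require the residue-family framework of Section \ref{polynomial}, which is the structural origin both of the recursion and of the precise coefficient $c_N=2^{N-1}N!(N-1)!$.
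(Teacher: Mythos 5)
The statement you set out to prove is stated in the paper only as a conjecture; the paper contains no proof of it, only supporting evidence ($N\le 3$ via Theorems \ref{M4} and \ref{M6}, the sphere, pseudo-sphere, Einstein and $\S^q\times\h^p$ cases, and the covariance of a candidate $\PP_8$ in Theorems \ref{M8} and \ref{P8-non-flat}, whose identification with $P_8$ is explicitly left open). Your argument is, in substance, a faithful reconstruction of exactly the machinery the paper uses for these partial results: the variational formula of Theorem \ref{c-cv}, the formula of Theorem \ref{comm-2} for $\T_{\f-1}$, and the coefficient identity $\binom{N-1}{j-1}^2(N-j)\,c_j c_{N-j}=\tfrac{1}{2N}c_N\, j$, which is precisely the computation carried out in the proof of Theorem \ref{inv}. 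You also correctly locate the genuine gap: this route shows at best that $\M_{2N}^0+c_N\T_{N-1}$ is a conformally covariant operator of lower order, not that it vanishes, and a fortiori it cannot identify the covariant operator so produced with the GJMS operator $P_{2N}$ — the paper itself cannot even identify $\PP_8$ with $P_8$. So as a proof of \eqref{universal} the proposal is incomplete, as you acknowledge; as a reconstruction of the paper's evidence and of the reason the statement remains conjectural, it matches the paper.

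Two technical cautions. First, your claim that the proof of Theorem \ref{comm-2} extends \emph{verbatim} to the bi-weight $(\f-N,\f+N)$ conjugation in the subcritical range is overstated: the right conjugation by $e^{-(\f-N)t\varphi}$ produces additional terms, and the subcritical matching must also account for the nonzero constant term of $\M_{2N}$, i.e. for the conformal variation of $Q$-curvature contributions. The paper's appendix treatment of $P_4$ for $n\ne 4$ shows the actual shape of the statement: the subcritical variation of $\T_1$ acquires the extra term $-\left(\f-2\right)4\,\T_1(g)(\varphi)$, and the argument must invoke the variation \eqref{var-q4} of $Q_4$, which in turn rests on recognizing the constant term via \eqref{present-q4}. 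This is exactly why the paper routes the subcritical cases through the $Q$-curvature recursions (Conjecture \ref{Q-quadrat} and the residue-family structure of Section \ref{polynomial}) rather than through a pure operator-variation induction. Second, even granting the corrected subcritical variational formula, your induction transports only the \emph{covariance} of the error term from lower orders to order $N$, not its vanishing, so the inductive scheme cannot close without the missing uniqueness or identification input — which is the same obstruction you name, appearing already at every inductive step and not only at the end.
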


Some remarks are in order. First of all, since  $\M_{2N}$ is of the
form $P_{2N} - \P_{2N}$, the relation \eqref{universal} is
equivalent to
$$
P_{2N}^0 = \P_{2N}^0 - c_N \delta (\Rho^{N-1}\# d) = \P_{2N}^0 - c_N
\T_{N-1}.
$$
This formula presents the non-constant part of $P_{2N}$ as a linear
combination of {\em all} products of order $2N$ which can be formed
by using lower order GJMS-operators (up to the contribution
$\T_{N-1}$).

For $\r^n$ with the Euclidean metric, $P_{2N}$ coincides with $\Delta^N$
and $\M_{2N} = 0$ follows from the summation formula \eqref{sum-0}.

\eqref{universal} combined with
$$
P_{2N}(1) = (-1)^N \left(\f-N\right) Q_{2N}
$$
yields a recursive formula for $P_{2N}$. Resolving the recursion
leads to a formula for $P_{2N}$ in terms of the $Q$-curvatures
$Q_{2N}, \dots, Q_2$ and the powers of the Schouten tensor $\Rho$.
The critical $P_n$ has the special property that it only depends on
the lower order $Q$-curvatures $Q_{n-2},\dots,Q_2$. In turn, using
recursive relations for $Q$-curvatures in terms of lower order
GJMS-operators and lower order $Q$-curvatures (see Conjecture
\ref{Q-quadrat}) further reduces $P_{2N}$ step by step to lower
order constructions. This method generates formulas for
GJMS-operators in terms of the Schouten tensor $\Rho$ and its
derivatives (under the assumption $\C=0$).

\eqref{universal} is called {\em universal} since the coefficients
of $\M_{2N}$ do not depend on the dimension. In other words, this
way of writing the non-constant part of the critical GJMS-operator
literally extends to the non-critical cases.

The recursive formula \eqref{universal} is expected to extend also
to the pseudo-Riemannian case (see Section \ref{ps-sphere} for the
discussion of a special case).

It is natural to summarize the relations \eqref{universal} in terms
of generating functions as
\begin{equation}\label{gf-cf-m}
\sum_{N \ge 1} \V_{2N}^0 \left(\frac{r^2}{4}\right)^{N-1} = \delta
((1 - r^2/2 \,\Rho)^{-2} \#d),
\end{equation}
where $\V_{2N}$ is defined by \eqref{V-def}. A natural
generalization of \eqref{gf-cf-m} for general metrics $g$ will be
discussed in Section \ref{n-flat}.

Conjecture \ref{formula} is supported by the special cases $N=1$,
$N=2$ and $N=3$. For $N=1$, \eqref{universal} just states the
obvious relation $P_2^0 = \Delta$. The following result follows from
\eqref{pan} by a direct calculation.

\begin{thm}\label{M4} On manifolds of dimension $n \ge 3$, the
Paneitz operator $P_4$ is given by
\begin{equation}\label{p4}
P_4^0 = (P_2^2)^0 - 4 \delta (\Rho \# d)
\end{equation}
and
$$
P_4(1) = \left(\f\!-\!2\right) Q_4, \quad Q_4 = \f \J^2 - 4 |\Rho|^2
- \Delta \J.
$$
\end{thm}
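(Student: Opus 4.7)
The proof is a direct computation starting from the explicit formulas \eqref{yama} for $P_2$ and \eqref{pan} for $P_4$. I would organize it in three steps.

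First, I would read off $P_4(1)$ directly from \eqref{pan}. Since $\Delta^2(1) = 0$ and the second-order term $\delta((n-2)\J g - 4\Rho)\#d$ annihilates constants, only the zeroth-order part survives, yielding
$$P_4(1) = \left(\f-2\right)\left(\f \J^2 - 2|\Rho|^2 - \Delta \J\right),$$
which gives the claimed formula for $Q_4$ (via \eqref{q-curv}).

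Second, I would compute $P_2^2$. Writing $P_2 = \Delta - (\f-1)\J$ and expanding $P_2(P_2 u)$ using the product rule $\Delta(\J u) = (\Delta \J) u + 2(d\J, du) + \J \Delta u$, one obtains
$$P_2^2 u = \Delta^2 u - 2(\f-1)\J \Delta u - 2(\f-1)(d\J, du) - (\f-1)(\Delta \J) u + (\f-1)^2 \J^2 u.$$
Evaluating at $u = 1$ gives $P_2^2(1) = -(\f-1)\Delta \J + (\f-1)^2 \J^2$, so subtracting yields
$$(P_2^2)^0 u = \Delta^2 u - (n-2)\J \Delta u - (n-2)(d\J, du).$$

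Third, I would expand the second-order part of $P_4$ using the general identity $\delta(T\#du) = -(T,\Hess u) + (\delta T, du)$ for symmetric bilinear $T$, already quoted in the proof of Theorem \ref{comm-2}. Taking $T = (n-2)\J g - 4\Rho$, and using $\delta(\J g) = -d\J$ and $(\J g, \Hess u) = \J \Delta u$, the non-constant part becomes
$$P_4^0 u = \Delta^2 u - (n-2)\J \Delta u - (n-2)(d\J, du) + 4(\Rho,\Hess u) - 4(\delta \Rho, du).$$
Subtracting $(P_2^2)^0$ leaves exactly $4(\Rho,\Hess u) - 4(\delta \Rho, du) = -4\delta(\Rho\#du)$, which is \eqref{p4}.

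There is no substantial obstacle; the argument is essentially bookkeeping of first-order terms. The one point needing care is the sign convention ($-\Delta$ non-negative, $\delta$ the negative divergence), so that $\delta(f\omega) = f\delta\omega - (df,\omega)$ is applied correctly when identifying the $(d\J, du)$ and $\J \Delta u$ contributions on the two sides. Note also that no curvature identity (such as the contracted Bianchi identity for $\delta \Rho$) is needed: the terms involving $\delta \Rho$ and $(d\J, du)$ match on both sides separately, without conversion between them.
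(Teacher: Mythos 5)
Your computation is correct and is exactly the ``direct calculation'' from \eqref{yama} and \eqref{pan} that the paper invokes for Theorem \ref{M4} without giving details: the whole content of \eqref{p4} is that $(P_2^2)^0 = \Delta^2 + (n-2)\,\delta(\J g\,\# d)$, which your expansion of $P_2^2$ and the identity $\delta(\J\,du) = -\J\Delta u - (d\J,du)$ verify, while the $\Rho$-term rides along untouched (so, as you say, no Bianchi-type identity is needed). The only caveat is that the coefficient of $|\Rho|^2$ your first step produces is $-2$, consistent with \eqref{q4-gen} and \eqref{pan}; the $-4$ appearing in the statement of the theorem is evidently a misprint in the paper, not a defect of your argument.
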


In Section \ref{app-pan}, we shall derive the conformal covariance
of $P_4$ in the critical case $n=4$ directly from \eqref{p4}.

\begin{thm}[\cite{J-book}, Corollary 6.12.2]\label{M6} On manifolds of
dimension $n \ge 6$, the GJMS-operator $P_6$ is given by
\begin{equation}\label{p6-general}
P_6^0 = \big[2(P_2 P_4 + P_4 P_2) - 3 P_2^3\big]^0 - 48 \delta
(\Rho^2 \# d) - \frac{16}{n\!-\!4} \delta (\B \# d)
\end{equation}
and
$$
P_6(1) = -\left(\f\!-\!3\right) Q_6.
$$
\end{thm}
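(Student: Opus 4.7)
The proof splits into two parts. The constant-term identity $P_6(1) = -(\f-3)Q_6$ is tautological, being precisely \eqref{q-curv} with $N=3$ (note $(-1)^3=-1$); no further argument is needed. All effort goes into the operator identity for the non-constant part.

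The plan is to introduce the candidate
$$
\tilde P_6 \st \bigl[2(P_2 P_4 + P_4 P_2) - 3 P_2^3\bigr] - 48\,\delta(\Rho^2 \# d) - \frac{16}{n-4}\,\delta(\B \# d),
$$
to show that $\tilde P_6$ (with the appropriate scalar term added back) is conformally covariant with the weights of $P_6$, and then to invoke uniqueness of natural conformally covariant operators of the form $\Delta^3 + LOT$ to conclude $\tilde P_6 = P_6$. Lemma \ref{LPP} applied to $\P_6$, together with the fact that both secondary terms are of order two, supplies leading part $\Delta^3$.

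The heart of the matter is the infinitesimal covariance
$$
(d/dt)|_0 \bigl(e^{(\f+3)t\varphi}\,\tilde P_6(e^{2t\varphi}g)\,e^{-(\f-3)t\varphi}\bigr) = 0.
$$
The variation of the primary part $2P_2P_4 + 2P_4P_2 - 3P_2^3$ is given by Theorem \ref{c-cv} with $N=3$ as $4[\M_4,[P_2,\varphi]] + 2[P_2,[\M_4,\varphi]]$; substituting Theorem \ref{M4}, namely $\M_4^0 = -4\,\T_1$, converts this defect into an explicit second-order expression quadratic in curvature and in derivatives of $\varphi$. The variation of $-48\,\T_2$ is supplied by Theorem \ref{comm-2}. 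In the locally conformally flat case, the coefficients $48 = c_3$ are precisely such that these two defects cancel, which is Conjecture \ref{formula} for $N=3$. Away from the conformally flat setting, the proof of Theorem \ref{comm-2} leaves a residual of the form \eqref{non-flat} involving the Cotton tensor $\Y$, and the role of the Bach term $-\frac{16}{n-4}\delta(\B \# d)$ is to absorb precisely this residual.

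The main obstacle is executing this final cancellation. Concretely, one needs the infinitesimal conformal transformation law of $\B$ (which, away from the critical dimension, produces a combination of a divergence of $\Y$ and first-order derivatives of $\Rho$ paired against $\Y$), and must verify that with the specific coefficient $-\tfrac{16}{n-4}$ all Cotton-type terms from \eqref{non-flat} are absorbed. Once the infinitesimal covariance is established for every $\varphi$ and every $g$, integration along the path $g_t = e^{2t\varphi}g$ delivers the full covariance of $\tilde P_6$; the uniqueness step — implementable by testing against Einstein backgrounds via \eqref{einstein} and the round sphere \eqref{product}, where $\B = 0$ and the explicit factorizations are available to pin down any residual lower-order natural conformally covariant correction — then identifies $\tilde P_6$ with $P_6$ and completes the proof.
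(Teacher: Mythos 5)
Your reduction of the constant-term identity to \eqref{q-curv} is fine, and your covariance computation for the candidate operator (Theorem \ref{c-cv} with $N=3$, $\M_4^0=-4\T_1$ from Theorem \ref{M4}, Theorem \ref{comm-2} for the $\delta(\Rho^2\#d)$ term, the Bach term absorbing the Cotton residual \eqref{non-flat}) is essentially the argument the paper gives in Section \ref{inv-4-6} -- but note that the paper presents that argument only as a proof of conformal \emph{covariance} of $\PP_6$ in the critical dimension $n=6$, not as a proof of Theorem \ref{M6}. The paper's actual proof of Theorem \ref{M6} (cited from \cite{J-book}, Corollary 6.12.2, and summarized right after the theorem) runs in the opposite direction: it starts from the explicit formula \eqref{q6} for $Q_6$ and obtains \eqref{p6-general} by conformal variation via \eqref{sub-det}; the identification with the genuine GJMS-operator is then automatic because $Q_6$ is the genuine $Q$-curvature.

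The genuine gap in your proposal is the final identification step. There is no uniqueness theorem for natural conformally covariant operators of the form $\Delta^3+LOT$, and the paper itself exhibits the failure: at the end of Section \ref{inv-4-6} it constructs, from $P_4'=P_4+\alpha|\C|^2$, a conformally covariant cube $P_6'$ in dimension $6$ which differs from $P_6$ by a multiple of the conformally covariant operator $\delta(|\C|^2\, d\,\cdot)$; this is exactly why the introduction stresses that the coincidence of $\PP_6$ with $P_6$ ``does not follow'' from the covariance argument. Your proposed repair -- testing against the round sphere and Einstein backgrounds via \eqref{product} and \eqref{einstein} -- cannot close this gap: any residual correction built from the Bach tensor or the Cotton tensor (for instance $\delta(\B\#d)$-type or $\delta(\Y\!\ast\!\nabla)$-type terms) vanishes identically on Einstein metrics, since $\B=0$ and $\Y=0$ there, so such backgrounds are blind to it, and without a classification of all natural conformally covariant corrections of lower order you cannot rule these out. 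A secondary issue: for $n>6$ Theorem \ref{comm-2} does not apply as stated (it governs the critical operator $\T_{\f-1}$ with weight $n$, whereas you need the variation of $\T_2$ with weights $\f\pm3$), and, as the paper's discussion of $P_4$ in Section \ref{app-pan} makes explicit, the subcritical covariance argument additionally requires the conformal variation of the constant term -- i.e.\ essentially the formula \eqref{q6} for $Q_6$, which is precisely the ingredient your proposal tries to bypass and which the paper's proof is built on.
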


Here the tensor
$$
\B_{ij} = \Delta(\Rho)_{ij} - \nabla^k \nabla_j(\Rho)_{ik} +
\Rho^{kl} \C_{kijl}
$$
generalizes the Bach tensor in dimension $4$. The Bach tensor term
in \eqref{p6-general} obstructs the existence of $P_6$ in dimension
$n=4$.

In the locally conformally flat case, \eqref{p6-general} simplifies
to
\begin{equation}\label{p6-flat}
P_6^0 = \big[2(P_2 P_4 + P_4 P_2) - 3 P_2^3\big]^0 - 48 \delta
(\Rho^2 \# d).
\end{equation}
Obviously, this formula is a special case of Conjecture
\ref{formula}.

In \cite{J-book}, we used the relation
$$
-P_6^0(g)(u) = (d/dt)|_0 \left(e^{6tu} Q_6(e^{2tu}g)\right)
$$
to derive \eqref{p6-general} from the explicit formula
\begin{equation}\label{q6}
Q_6 = \left[-2P_2(Q_4) + 2 P_4(Q_2) - 3 P_2^2(Q_2)\right] - 6(Q_4 +
P_2(Q_2)) Q_2 - 3! 2! 2^5 v_6
\end{equation}
for $Q_6$. Here
\begin{equation}\label{v6}
v_6 = - \frac{1}{8} \tr (\wedge^3 \Rho) - \frac{1}{24(n\!-\!4)}
(\B,\Rho).
\end{equation}

The discussion in Section \ref{polynomial} will show that \eqref{q6}
should be regarded as a special case of the recursive formula
\eqref{Q-gen-rec} for $Q$-curvatures in terms of the leading
coefficients of the $Q$-polynomials $Q_{2N}^{res}(\lambda)$.

In Section \ref{inv-4-6}, we present an alternative proof of the
conformal covariance of $P_6$ in the critical dimension $n=6$. It
illustrates the argument provided by Theorem \ref{inv}. In the
special case of $P_6$, it derives Theorem \ref{M6} in dimension
$n=6$ from Theorem \ref{M4} and $P_2^0 = \Delta$ by using Theorem
\ref{c-cv} and Theorem \ref{comm-2}.

\begin{thm}\label{inv} For a locally conformally flat metric $g$, the
relations
\begin{equation}\label{sub-rep}
\M_{2N}^0(g) = -c_N \T_{N-1}(g) \quad \mbox{for all} \quad
N=1,\dots,\f-1
\end{equation}
imply
\begin{equation}\label{inf-pn}
(d/dt)|_0 \left(e^{nt\varphi} (\P_n^0 - c_\f
\T_{\f-1})(e^{2t\varphi}g)\right) = 0.
\end{equation}
\end{thm}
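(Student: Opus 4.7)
The plan is to apply Theorem \ref{c-cv} at the critical index $N=\f$ together with Theorem \ref{comm-2}, and use the subcritical hypothesis $\M_{2k}^0(g) = -c_k \T_{k-1}(g)$ for $k \le \f-1$ to reduce both variations to the same linear combination of iterated $\T$--$\T$--$\varphi$ commutators. First I would apply Theorem \ref{c-cv} with $N=\f$ to write
$$
(d/dt)|_0 (e^{nt\varphi} \P_n(e^{2t\varphi}g)) = \sum_{j=1}^{\f-1}\binom{\f\!-\!1}{j\!-\!1}^2 (\f\!-\!j)\,[\M_{2j}(g),[\M_{n-2j}(g),\varphi]].
$$
Since the non-constant part of an operator annihilates its multiplication-by-a-function piece, the variation of $\P_n^0$ equals the non-constant part of the right-hand side, so zero-order contributions may be discarded freely.

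I would then simplify the commutators modulo zero-order terms. The inner commutator satisfies $[\M_{n-2j},\varphi] = [\M_{n-2j}^0,\varphi]$ on the nose, because the multiplication operator $\M_{n-2j}(1)$ commutes with $\varphi$; the hypothesis converts this to $-c_{\f-j}[\T_{n/2-j-1},\varphi]$, a first-order operator. Splitting the outer commutator as $[\M_{2j}^0,\,\cdot\,]+[\M_{2j}(1),\,\cdot\,]$, the second summand is the commutator of a multiplication operator with a first-order operator, hence itself zero order and killed by $[\,\cdot\,]^0$. Applying the hypothesis once more yields
$$
[\M_{2j},[\M_{n-2j},\varphi]]^0 = c_j c_{\f-j}\, [\T_{j-1},[\T_{n/2-j-1},\varphi]]^0.
$$

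In parallel, Theorem \ref{comm-2} gives
$$
c_\f\,(d/dt)|_0 (e^{nt\varphi} \T_{\f-1}(e^{2t\varphi}g)) = \frac{c_\f}{n} \sum_{j=1}^{\f-1} j\,[\T_{j-1}(g),[\T_{n/2-j-1}(g),\varphi]]^0,
$$
so the desired identity \eqref{inf-pn} reduces to the term-by-term combinatorial identity
$$
\binom{\f\!-\!1}{j\!-\!1}^2 (\f\!-\!j)\, c_j\, c_{\f-j} = \frac{c_\f}{n}\, j, \qquad 1 \le j \le \f-1.
$$
With $c_k = 2^{k-1}k!(k-1)!$ a short factorial manipulation shows both sides equal $2^{\f-2}\,((\f-1)!)^2\, j$, and the proof is complete.

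The conceptual care point is that the subcritical hypothesis controls only $\M_{2k}^0$, not the constant terms $\M_{2k}(1)$; but these constants produce only zero-order debris in the commutators of Theorem \ref{c-cv} and are washed out upon passing to $[\,\cdot\,]^0$. Once that bookkeeping is clear, the matching of coefficients is a one-line binomial identity.
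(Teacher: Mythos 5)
Your proposal is correct and follows the paper's own argument essentially verbatim: apply Theorem \ref{c-cv} at $N=\f$, discard constant-term debris in the iterated commutators after passing to $[\,\cdot\,]^0$, substitute the subcritical hypothesis, invoke Theorem \ref{comm-2} for the variation of $\T_{\f-1}$, and match coefficients via $\binom{\f-1}{j-1}^2(\f-j)\,c_j\,c_{\f-j}=\tfrac{1}{n}c_\f\, j$. The coefficient check (both sides equal $2^{\f-2}((\f-1)!)^2 j$) is also right.
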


\begin{proof} Theorem \ref{c-cv} (in the critical case) implies
\begin{equation*}
(d/dt)|_0 \left(e^{n t\varphi} \P_n(e^{2t\varphi}g) \right) =
\sum_{j=1}^{\f-1} \binom{\f\!-\!1}{j\!-\!1}^2 \left(\f\!-\!j\right)
\left[\M_{2j}(g),\left[\M_{n-2j}(g),\varphi\right]\right].
\end{equation*}
The commutators with $\varphi$ do not depend on the constant terms
of the respective operators. Moreover, since these commutators are
of first order, their commutators with any operator contain the
constant term of the latter only in their constant terms. It follows
that
\begin{equation*}
(d/dt)|_0 \left(e^{n t\varphi} \P_n^0(e^{2t\varphi}g) \right) =
\sum_{j=1}^{\f-1} \binom{\f\!-\!1}{j\!-\!1}^2 \left(\f\!-\!j\right)
\left[\M^0_{2j}(g),\left[\M^0_{n-2j}(g),\varphi\right]\right]^0.
\end{equation*}
Using \eqref{sub-rep}, the right-hand side simplifies to
$$
\sum_{j=1}^{\f-1} \binom{\f\!-\!1}{j\!-\!1}^2 \left(\f\!-\!j\right)
c_j \, c_{\f-j}
\left[\T_{j-1}(g),\left[\T_{\f-j}(g),\varphi\right]\right]^0.
$$
In order to determine the variation of the term $\T_{\f-1}$, we
apply \eqref{var-t}. Thus we add
$$
-\frac{1}{n} c_\f \sum_{j=1}^{\f-1} j
\left[\T_{j-1}(g),\left[\T_{\f-j}(g),\varphi\right]\right]^0.
$$
Now a calculation shows that
$$
\binom{\f\!-\!1}{j\!-\!1}^2 \left(\f\!-\!j\right) c_j \, c_{\f-j} =
\frac{1}{n} c_\f j.
$$
The proof is complete.
\end{proof}

The infinitesimal conformal covariance \eqref{inf-pn} for all
metrics in the conformal class of $g$ implies the conformal
covariance
\begin{equation*}
e^{n\varphi} \PP_n (e^{2\varphi}g) =  \PP_n(g)
\end{equation*}
of
\begin{equation}\label{pp}
\PP_n \st \P_n^0 - c_\f \T_{\f-1}.
\end{equation}
In fact, we find
\begin{align*}
e^{n\varphi} \PP_n(e^{2\varphi}g) - \PP_n(g) & = \int_0^1 (d/dt)
\left( e^{n t\varphi} \PP_n(e^{2t\varphi} g) \right) dt \\
& = \int_0^1 e^{ns\varphi} (d/dt)|_0 \left(e^{nt\varphi}
\PP_n (e^{2t\varphi} (e^{2s\varphi} g))\right) ds \\
& = 0
\end{align*}
by the infinitesimal conformal covariance of $\PP_n$ for the metrics
$e^{2s\varphi}g$.

Thus, Theorem \ref{inv} enables us to derive (in the locally
conformally flat case) the conformal covariance of $\PP_n$ from the
presentations \eqref{sub-rep}, i.e.,
$$
P_{2N}^0 = \P_{2N}^0 - c_N \T_{N-1}
$$
for {\em all} subcritical GJMS-operators.

In the following section, we shall apply this argument to prove the
conformal covariance of $\PP_8$ (for locally conformally flat
metrics). The proof of Theorem \ref{P8-non-flat} extends the
argument to general metrics.

%%%%%%%%%%%%%%%%%%%%%%%%%%%%%%%%%%%%%%%%%%%%%%%%%%%%%%%%%%%%%%%%%%%%%%%%%%
\section{A conformally covariant fourth power of the Laplacian}\label{fourth}

As an application of Theorem \ref{inv} we have the following
construction of a conformally covariant fourth power of the
Laplacian (in the locally conformally flat category).

\begin{thm}\label{M8} In dimension $n=8$ and for locally
conformally flat metrics, the operator
\begin{equation}\label{pp8-flat}
\PP_8 = \P_8^0 - c_4 \delta( \Rho^3 \# d)
\end{equation}
with
$$
\P_8 = (3P_2P_6 + 3P_6P_2 + 9P_4^2) - (8P_2P_4P_2 + 12 P_2^2P_4 + 12
P_4P_2^2) + 18P_2^4
$$
is conformally covariant, i.e.,
$$
e^{8\varphi} \PP_8 (e^{2\varphi} g) = \PP_8(g)
$$
for all $\varphi \in C^\infty(M)$.
\end{thm}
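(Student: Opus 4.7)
The plan is to recognize $\PP_8$ as precisely the operator $\PP_n$ from \eqref{pp} specialized to the critical dimension $n=8$, so that conformal covariance follows immediately from Theorem \ref{inv} together with the integration argument displayed in the paragraph after \eqref{pp}. Indeed, $\PP_8 = \P_8^0 - c_4 \T_3$ matches $\PP_n = \P_n^0 - c_{n/2}\T_{n/2-1}$, with $c_4 = 2^3\,4!\,3!$, and the primary part $\P_8$ of Example \ref{m8} is exactly the $\P_n$ appearing there.

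To invoke Theorem \ref{inv} with $n=8$, I would first verify its hypothesis, namely the three subcritical representation formulas $\M_{2N}^0(g) = -c_N\T_{N-1}(g)$ for $N = 1, 2, 3$. The case $N=1$ reduces to $P_2^0 = \Delta$ via $\M_2 = P_2$ (Example \ref{m24}), $c_1 = 1$, and $\T_0 = -\Delta$. The case $N=2$ with $c_2 = 4$ is exactly Theorem \ref{M4}. The case $N=3$ with $c_3 = 48$ is the locally conformally flat specialization \eqref{p6-flat} of Theorem \ref{M6}; the Bach-type correction in \eqref{p6-general} drops because $\C = 0$. All three identities are thus already on record and require no new work.

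Once the subcritical identities are in place, Theorem \ref{inv} yields without further input the infinitesimal conformal covariance
\begin{equation*}
(d/dt)|_0\bigl(e^{8t\varphi}\,\PP_8(e^{2t\varphi}g)\bigr) = 0
\end{equation*}
for every $\varphi \in C^\infty(M)$ and every locally conformally flat $g$. Applying this identity to each metric $e^{2s\varphi}g$, $s \in [0,1]$, in the conformal class of $g$ and integrating over $s$, i.e., repeating verbatim the three-line calculation displayed in the excerpt just after \eqref{pp}, upgrades the infinitesimal statement to the finite covariance $e^{8\varphi}\PP_8(e^{2\varphi}g) = \PP_8(g)$.

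The main obstacle, if there is one, is purely bookkeeping: one must be confident that the combinatorial identity $\binom{n/2-1}{j-1}^2(n/2 - j)\,c_j c_{n/2-j} = \tfrac{1}{n}\,c_{n/2}\,j$ used at the end of the proof of Theorem \ref{inv} holds for $n = 8$ and $j \in \{1,2,3\}$; but this is established there for general $n$, so no extra verification is required for the particular case at hand. All the genuinely analytic content---the variational formula of Theorem \ref{c-cv} for $\P_{2N}$ and that of Theorem \ref{comm-2} for $\T_{n/2-1}$ in the locally conformally flat category---has already been carried out in Section \ref{inf-var}, so that the proof of Theorem \ref{M8} is in the end just an assembly of prior results.
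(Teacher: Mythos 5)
Your proposal is correct and follows the paper's own proof: the paper likewise observes that the hypotheses of Theorem \ref{inv} for $N=1,2,3$ are supplied by $P_2^0=\Delta$, Theorem \ref{M4} and the locally conformally flat case of Theorem \ref{M6}, and then invokes Theorem \ref{inv} (with the subsequent integration argument) to conclude. The only difference is that you spell out the integration step and the combinatorial identity explicitly, which the paper leaves implicit.
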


\begin{proof} It is obvious that $\PP_8$ is of the form $\Delta^4 +
LOT$. By Theorem \ref{M4} and Theorem \ref{M6}, the operators
$\M_2^0$, $\M_4^0$ and $\M_6^0$ satisfy the relations
\eqref{sub-rep}. Theorem \ref{inv} implies the infinitesimal
conformal invariance of $\PP_8$.
\end{proof}

Conjecture \ref{formula} for $N=4$ extends Theorem \ref{M8} to
non-critical dimensions. It not only claims the conformal covariance
of the operator $\PP_8$, but also asserts that it coincides with
$P_8$. For the convenience of the reader, we restate that special
case in a form which also includes a description of the recursive
structure of the constant term $Q_8$ (for more details see Section
\ref{Q-rec}).

\begin{conj}\label{M8-gen} On locally conformally flat manifolds
of dimension $n \ge 3$, the GJMS-operator $P_8$ is given by
\begin{equation}\label{p8}
P_8 = \P_8^0 - 3!4!2^3 \delta( \Rho^3 \# d) + \left(\f\!-\!4\right)
Q_8,
\end{equation}
where
\begin{align*}
\P_8 & = (3 P_2 P_6 + 3 P_6 P_2 + 9 P_4^2) - (8 P_2 P_4 P_2 + 12
P^2_2 P_4 + 12 P_4 P_2^2) + 18 P_2^4 \\
& = \Delta^4 + LOT
\end{align*}
and
\begin{equation}\label{q8}
Q_8 = \Q_8 - 12 (Q_6 - \Q_6) Q_2 - 18 (Q_4 - \Q_4)^2 + 4!3!2^7 v_8,
\; v_8 = 2^{-4} \tr(\wedge^4 \Rho).
\end{equation}
The quantities $\Q_4$, $\Q_6$ and $\Q_8$ are displayed in Examples
\ref{qres-4}, \ref{qres-6} and \ref{qres-8}.
\end{conj}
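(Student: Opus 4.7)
My plan is to separate Conjecture \ref{M8-gen} into its two essentially independent assertions: the explicit expression \eqref{q8} for the $Q$-curvature $Q_8$, and the operator formula \eqref{p8} for $P_8$. These are linked by the variational identity \eqref{sub-det}, so that either one determines the other. I would prove \eqref{q8} first and then derive \eqref{p8} by conformal variation, following the template by which \eqref{p6-general} is obtained from \eqref{q6} in \cite{J-book}.

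Step 1 (the formula \eqref{q8} for $Q_8$). This is the $N=4$ instance of Conjecture \ref{Q-G}, namely $\G(r^2/4) = \sqrt{v(r)}$. In the locally conformally flat setting the Poincar\'e--Einstein volume form is polynomial and has the closed form $v(r) = \det\!\bigl(1 - (r^2/2)\Rho\bigr)$, so $v_{2j} = (-2^{-1})^j \tr(\wedge^j \Rho)$ and in particular $v_8 = 2^{-4}\tr(\wedge^4 \Rho)$. I would expand both sides of $\G(r^2/4) = \sqrt{v(r)}$ to order $r^8$, extract the coefficient of $r^4/(4!3!)$ on the left using \eqref{gf}, and re-express the $w_j$-cross terms on the right via the already-established identities \eqref{defect-4-6} for $Q_4-\Q_4$ and $Q_6-\Q_6$. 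The linear $w_8$-contribution produces the term $3!4!2^7 v_8$, while the $w_4^2$ and $w_2 w_6$ cross terms convert to $-18(Q_4-\Q_4)^2$ and $-12(Q_6-\Q_6)Q_2$ respectively, reproducing \eqref{q8}.

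Step 2 (the formula \eqref{p8} for $P_8$). Once \eqref{q8} is in hand, the non-constant part $P_8^0$ is computed by differentiating $e^{2Nt\varphi}Q_8(e^{2t\varphi}g)$ at $t=0$ per \eqref{sub-det}: the variations of $\Q_8$ and of the lower-order $Q$-curvatures appearing in the secondary part are computed using the conformal covariance of $P_2, P_4, P_6$ and the variational formulas for $Q_2, Q_4, Q_6$, while the variation of $v_8$ reduces, via the standard conformal transformation law of $\Rho$, to a divergence expression in powers of $\Rho$. After combining, the terms linear in $\varphi$ and its derivatives should reassemble into $\P_8^0(\varphi) - c_4 \delta(\Rho^3 \# d)(\varphi)$, proving \eqref{p8}. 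A structurally cleaner route to the same conclusion is to replay the argument of Theorem \ref{inv} at $N=4$: Theorem \ref{c-cv} combined with the substitutions $\M_{2j}^0 = -c_j\,\T_{j-1}$ for $j=1,2,3$ (supplied by $P_2^0 = \Delta$, Theorem \ref{M4}, and Theorem \ref{M6} in the lcf category) converts $(d/dt)|_0\,\P_8^0$ into a sum of commutators $[\T_{j-1},[\T_{3-j},\varphi]]^0$, and a subcritical analog of Theorem \ref{comm-2} for $\T_3$ matches these commutators with the variation of $c_4\T_3$. Since $\P_8^0 - c_4\T_3$ then becomes infinitesimally, hence by integration genuinely, conformally covariant of weight $(\f-4, \f+4)$, and has leading symbol $\Delta^4$ by Lemma \ref{LPP}, uniqueness of the GJMS completion in the lcf case identifies it with $P_8^0$.

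The main obstacle. I expect the principal technical difficulty to lie in the conformal variation of $v_8$ in Step 2 and its clean identification with $-c_4\,\delta(\Rho^3\#d)$: the parallel computation for $v_6$ behind \eqref{v6} is already nontrivial, and the extra power of $\Rho$ in $v_8 = 2^{-4}\tr(\wedge^4\Rho)$ creates substantially more bookkeeping. In the alternative route via Theorem \ref{inv}, the bottleneck becomes the subcritical extension of Theorem \ref{comm-2} for $\T_3$ in dimensions $n > 8$, which requires verifying that the dimension-independent combinatorial identity $\binom{\f-1}{j-1}^2(\f-j)\,c_j c_{\f-j} = \tfrac{1}{n}\,c_\f\, j$ used at the end of the proof of Theorem \ref{inv} admits a subcritical counterpart, and that the remaining Cotton-tensor contributions in the proof of Theorem \ref{comm-2} (displayed in \eqref{non-flat}) indeed vanish for general $n$ once $\C=0$. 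Either difficulty is of the same combinatorial flavor as identities already handled in the paper, which gives some confidence that the strategy will go through.
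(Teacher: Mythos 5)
The statement you set out to prove is a \emph{conjecture} in the paper: the paper offers no proof of it, proves only the conformal covariance of $\PP_8$ in the critical dimension (Theorem \ref{M8} for $n=8$ and locally conformally flat metrics, Theorem \ref{P8-non-flat} for general metrics), and explicitly records that the identification of $\PP_8$ with $P_8$ remains open. Your proposal does not close these gaps. In Step 1 you obtain \eqref{q8} by ``expanding both sides'' of $\G(r^2/4)=\sqrt{v(r)}$ --- but that identity is Conjecture \ref{Q-G}, which the paper establishes only for round spheres, pseudo-spheres and the $\S^q\times\h^p$ family; for a general locally conformally flat metric its $N=4$ coefficient identity \emph{is} \eqref{q8} (equivalently \eqref{rec-q-8}, which \cite{J-book} derives only in dimension $n=8$ and under a technical assumption). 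The manipulation with the coefficients $w_{2j}$ and the identities \eqref{defect-4-6} is exactly the equivalence between Conjecture \ref{Q-quadrat} and Conjecture \ref{Q-G} already noted in the paper; it reformulates the claim but proves neither, so Step 1 is circular.

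In Step 2, the route by conformal variation of \eqref{q8} is precisely the strategy the paper itself suggests and defers (``we will return to this problem elsewhere''), but you do not carry out the computation, and it is the hard part: via \eqref{sub-det} the variation of $\Q_8$ and of the lower-order secondary terms must reassemble, after large cancellations, into $\P_8^0-c_4\,\delta(\Rho^3\#d)$ applied to $\varphi$, and Section \ref{co} stresses that understanding this cancellation mechanism is the missing ingredient. Your alternative route through Theorem \ref{inv} only reproves Theorem \ref{M8}, i.e.\ covariance of $\PP_8$ in dimension $8$; for subcritical dimensions one needs a subcritical analogue of Theorem \ref{comm-2} for $\T_3$ \emph{and} control of the constant term along the lines of \eqref{mod-op}, neither of which is available, and you only flag the first of these. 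Most seriously, your final step --- ``uniqueness of the GJMS completion in the lcf case identifies it with $P_8^0$'' --- is unsupported: conformal covariance together with leading term $\Delta^4$ does not single out the GJMS-operator (no such uniqueness statement appears in the paper, and the closing remark of Section \ref{app} on $P_4+\alpha|\C|^2$ illustrates the general failure of uniqueness), and this identification is exactly the point the paper declares open. So the proposal is a reasonable programme, essentially the one the author sketches, but it is not a proof.
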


\eqref{p6-general} can be derived from \eqref{q6} by conformal
variation. Therefore, it seems natural to prove that $\PP_8$
coincides with $P_8$ by conformal variation of \eqref{q8}. We will
return to this problem elsewhere. For an extension of Conjecture
\ref{M8-gen} to general metrics see Section \ref{n-flat}.

%%%%%%%%%%%%%%%%%%%%%%%%%%%%%%%%%%%%%%%%%%%%%%%%%%%%%%%%%%%%%%%%%%%%%%%%%%%%%%%%%
\section{Round spheres}\label{spheres}

\renewcommand{\thefootnote}{\arabic{footnote}}

On the round spheres $\S^n$, the GJMS-operators factor into
second-order operators (shifted Laplacians) according to the product
formula \eqref{product}. In particular, all GJMS-operators can be
written as universal polynomials in $P_2$:
\begin{equation}\label{univ}
P_{2N} = \prod_{j=0}^{N-1} (P_2 + j(j+1)), \; N \ge 1.
\end{equation}
Thus, \eqref{q-curv} yields

\begin{corr}\label{Q-S} On the round sphere $\S^n$,
\begin{equation}\label{Q-S-form}
Q_{2N} = \f \prod_{j=1}^{N-1} \left(\f-j\right)\left(\f+j\right), \;
N \ge 1.
\end{equation}
\end{corr}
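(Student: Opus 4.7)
The plan is to deduce the formula directly from the universal polynomial presentation \eqref{univ} by evaluating $P_{2N}$ at the constant function $1$ and comparing with the definition \eqref{q-curv} of the $Q$-curvature. The key observation is that on the round sphere the operator $P_2$ acts on constants as a scalar, so every polynomial in $P_2$ does too, which reduces the problem to a finite product of scalars.

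First I would compute $P_2(1)$ on $\S^n$. Since $\Delta(1)=0$ and the round metric has scalar curvature $\tau = n(n-1)$, one gets $\J = n/2 = \f$, hence
\[
P_2(1) = -(\f-1)\J = -(\f-1)\f.
\]
Consequently each factor $(P_2 + j(j+1))$ in \eqref{univ} sends the constant $1$ to the scalar
\[
-(\f-1)\f + j(j+1),
\]
and a one-line factorization rewrites this as $-(\f-j-1)(\f+j)$.

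Multiplying the factors for $j=0,\dots,N-1$ gives
\[
P_{2N}(1) = (-1)^N \prod_{j=0}^{N-1}(\f-j-1)(\f+j) = (-1)^N \Bigl[\prod_{j=1}^{N}(\f-j)\Bigr]\Bigl[\prod_{j=0}^{N-1}(\f+j)\Bigr].
\]
On the other hand, \eqref{q-curv} says $P_{2N}(1) = (-1)^N(\f-N)Q_{2N}$. Equating the two expressions, the factor $(\f-N)$ and the sign $(-1)^N$ cancel, and after pulling the $j=0$ term $\f$ out of the second product the remaining factors regroup into $\prod_{j=1}^{N-1}(\f-j)(\f+j)$, which is exactly \eqref{Q-S-form}.

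There is no real obstacle here; the only point requiring a little care is the factorization $-(\f-1)\f + j(j+1) = -(\f-j-1)(\f+j)$ and the bookkeeping of the indices when the product is split into the $(\f-j)$ and $(\f+j)$ pieces. The argument is valid whenever $\f-N \neq 0$; in the critical dimension $n=2N$ the identity is understood as the value of the subcritical $Q_{2N}$ extended by continuity in the dimension, consistent with the convention explained after \eqref{q-curv}.
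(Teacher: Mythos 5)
Your proof is correct and is exactly the argument the paper intends: the Corollary is stated as an immediate consequence of the product formula \eqref{univ} (equivalently \eqref{product}) applied to the constant function $1$, compared with the definition \eqref{q-curv}, which is precisely your computation. The factorization $-(\f-1)\f + j(j+1) = -(\f-j-1)(\f+j)$ and the index bookkeeping check out, as does your remark on the critical case $2N=n$ being understood by continuation in the dimension.
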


The following refinement of Conjecture \ref{formula} also describes
the constant terms of the operators $\M_{2N}$.\footnote{The proofs
of Lemma \ref{develop}, Lemma \ref{ps} and Lemma \ref{finale} are
due to C. Krattenthaler.}

\begin{thm}\label{sphere} On the round spheres $\S^n$,
\begin{equation}\label{red}
\M_{2N} = N! (N\!-\!1)! P_2, \; N \ge 1.
\end{equation}
\end{thm}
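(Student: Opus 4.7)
The plan is to reduce Theorem~\ref{sphere} to a polynomial identity in a single variable and then establish that identity by evaluating both sides at a judicious collection of $N+1$ points.

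By the product formula \eqref{product}, on $\S^n$ every GJMS-operator is a universal polynomial in $P_2$, namely $P_{2k} = p_k(P_2)$ with $p_k(x) \st \prod_{j=0}^{k-1}(x+j(j+1))$. In particular all $P_{2k}$ commute, so $P_{2I} = \prod_s p_{I_s}(P_2)$ for every composition $I$, and
$$
\M_{2N} = f_N(P_2), \qquad f_N(x) \st \sum_{|I|=N} m_I \prod_{s=1}^r p_{I_s}(x).
$$
The assertion \eqref{red} is equivalent to the polynomial identity $f_N(x) = N!(N-1)!\,x$ in $\mathbb{R}[x]$. Since each $p_{I_s}$ has degree $I_s$, $\deg f_N \le N$, and $f_N(0) = 0$ because $p_k(0)=0$ for all $k\ge 1$. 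Only the composition $I=(N)$ contributes to the coefficient of $x^1$ in $f_N$ (any longer composition yields a polynomial of valuation $\ge 2$ at $x=0$); since $m_{(N)}=1$ this coefficient equals $\prod_{j=1}^{N-1} j(j+1) = N!(N-1)!$. Hence $f_N(x) - N!(N-1)!\,x$ is a polynomial of degree $\le N$ with vanishing constant and linear coefficients, and it suffices to exhibit $N-1$ further roots.

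The key observation is that at $x_k \st -k(k+1)$ the factor $x+i(i+1)$ vanishes when $i=k$, so $p_j(x_k)=0$ for every $j>k$. Consequently the sum defining $f_N(x_k)$ collapses to compositions of $N$ with all parts $\le k$, and the identity at $x=x_k$ reduces to the truncated combinatorial identity
$$
\sum_{|I|=N,\; I_s \le k}\, m_I \prod_{s=1}^r p_{I_s}(-k(k+1)) = -N!(N-1)!\,k(k+1).
$$
Under the substitution $x = -\mu(\mu+1)$ one obtains the clean factorization $p_j(x) = (-\mu)_j(\mu+1)_j$ in terms of Pochhammer symbols, and at $\mu=k$ this gives $p_j(x_k) = (-1)^j(k+j)!/(k-j)!$ for $j\le k$. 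For $k=1$ only $I=(1,\dots,1)$ survives, and a direct calculation using $p_1(-2)=-2$ and $m_{(1,\dots,1)} = -(-1)^N N!(N-1)!/2^{N-1}$ verifies the identity. For general $k$ I would prove the truncated identity by induction on $k$, splitting compositions with parts $\le k+1$ into those with parts $\le k$ and those containing at least one part equal to $k+1$, and arguing that the new contribution telescopes. An alternative route would use the integral representation $1/(I_j+I_{j+1}) = \int_0^1 t^{I_j+I_{j+1}-1}\,dt$ to convert the sum into a multiple integral whose integrand factors through the generating function $A(y) = \sum_{k \ge 1} p_k(x)y^k/(k!(k-1)!) = y\,\partial_y\,{}_2F_1(-\mu,\mu+1;1;y)$, i.e., through a Legendre function.

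Once the truncated identity holds for $k=1,\dots,N$, the polynomial $f_N(x)-N!(N-1)!\,x$ of degree $\le N$ has the $N+1$ distinct roots $0,-2,-6,\dots,-N(N+1)$, and therefore vanishes identically, completing the proof. The principal obstacle is exactly the truncated combinatorial identity: the factor $\prod_j 1/(I_j+I_{j+1})$ couples adjacent parts of the composition and blocks any naive factorization of the sum over compositions, so extracting its closed-form value requires either an induction that carefully tracks the boundary of compositions with parts $\le k$, or a transformation into a multiple integral that can be evaluated using the contiguous or differentiation identities for ${}_2F_1(-\mu,\mu+1;1;y)$.
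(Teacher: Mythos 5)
Your reduction is sound as far as it goes: on $\S^n$ every $P_{2k}$ is the polynomial $p_k(P_2)$ with $p_k(x)=\prod_{j=0}^{k-1}(x+j(j+1))$, so \eqref{red} is equivalent to the identity $f_N(x)=N!(N-1)!\,x$ for $f_N(x)=\sum_{|I|=N}m_I\prod_s p_{I_s}(x)$, and your bookkeeping of the low-order coefficients (double root at $0$, linear coefficient $N!(N-1)!$ coming only from $I=(N)$) and the evaluation $p_j(-k(k+1))=(-1)^j(k+j)!/(k-j)!$ are all correct, as is the $k=1$ check. But the proof stops exactly where the theorem actually lives: the truncated identity
$$
\sum_{|I|=N,\;I_s\le k} m_I \prod_{s=1}^r p_{I_s}(-k(k+1)) \;=\; -\,N!\,(N-1)!\,k(k+1)
$$
for general $k$ is asserted, not proved. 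Your two suggested routes are only hopes: the induction on $k$ needs an actual argument for why the contribution of compositions containing a part equal to $k+1$ ``telescopes'' (no mechanism is offered, and the coupling factor $\prod_j 1/(I_j+I_{j+1})$, which you yourself identify as the obstacle, ties each new part $k+1$ to its neighbours and prevents the sum from splitting along the occurrences of $k+1$), and the integral-representation route is likewise left at the level of a suggestion. Since the theorem is exactly the statement that all these weighted sums over compositions collapse, omitting their evaluation is a genuine gap rather than a routine verification.

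For comparison, the paper's proof spends all of its effort on precisely this kind of collapse: it first linearizes products via Lemma \ref{develop} (a Pfaff--Saalsch\"utz evaluation of a ${}_3F_2$), then evaluates the partial sums $\sum_{J,\,a+|J|=N} m_{(a,J)}P_{2J}$ by an induction on $N$ using \eqref{bino} and Chu--Vandermonde (Lemma \ref{ps}), and finally resums over $a$ (Lemma \ref{finale}). That these steps require two nontrivial hypergeometric summations is a good indication that your truncated identities will not follow from a soft telescoping argument; if you want to pursue your interpolation strategy, you should expect to prove a closed form for the sums over compositions with bounded parts by a comparably careful induction, at which point you would essentially be redoing the content of Lemmas \ref{develop}--\ref{finale} in a different coordinate.
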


For the proof of Theorem \ref{sphere} we split
$$
\M_{2N} = \sum_{|I|=N} m_I P_{2I}
$$
into the sum $\sum_{a=1}^N S_{(a,N)}$ of the partial sums
\begin{equation}\label{pa-su}
S_{(a,N)} \st \sum_{J, \, a + |J| = N} m_{(a,J)} P_{2a} P_{2J}.
\end{equation}

\begin{lemm}\label{develop} For all non-negative integers $A$ and $B$,
\begin{equation}\label{prep-op}
P_{2A} P_{2B} = \sum_{j=0}^A (-1)^j \frac{A ! B! (A\!+\!B)!}
{j!(A\!-\!j)!(B\!-\!j)!(A\!+\!B\!-\!j)!} P_{2(A+B-j)}.
\end{equation}
\end{lemm}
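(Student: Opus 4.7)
The plan is to reduce \eqref{prep-op} to a polynomial identity in $P_2$. Since \eqref{univ} presents each $P_{2N}$ as $\prod_{k=0}^{N-1}(P_2+k(k+1))$, both sides of \eqref{prep-op} lie in the commutative subalgebra $\mathbb{R}[P_2]$; it therefore suffices to verify
\begin{equation*}
\prod_{k=0}^{A-1}(T+k(k+1))\cdot\prod_{k=0}^{B-1}(T+k(k+1)) = \sum_{j=0}^{A}(-1)^{j}\,c_{A,B,j}\prod_{k=0}^{A+B-j-1}(T+k(k+1))
\end{equation*}
in $\mathbb{R}[T]$, where $c_{A,B,j} = \frac{A!\,B!\,(A+B)!}{j!\,(A-j)!\,(B-j)!\,(A+B-j)!}$.

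I would argue by induction on $A$, keeping $B$ fixed. The base case $A=0$ is immediate since $c_{0,B,0}=1$ and all higher coefficients vanish by the convention $1/(-m)!=0$ for $m\ge 1$. For the inductive step I multiply the identity at level $A$ by $T+A(A+1)$, using $P_{2(A+1)}(T) = P_{2A}(T)(T+A(A+1))$ on the left. On the right I apply the elementary rearrangement
\begin{equation*}
(T+A(A+1))\,P_{2m}(T) = P_{2(m+1)}(T) + \bigl(A(A+1)-m(m+1)\bigr)P_{2m}(T),
\end{equation*}
which follows at once from $P_{2(m+1)}(T) = P_{2m}(T)(T+m(m+1))$. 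Substituting $m=A+B-j$ and re-indexing the second contribution by $k = j+1$, the right-hand side becomes a linear combination of $P_{2(A+1+B-k)}$ for $k = 0,\dots,A+1$.

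Comparing coefficients of $P_{2(A+1+B-k)}$ produces the recursion
\begin{equation*}
c_{A+1,B,k} = c_{A,B,k} + c_{A,B,k-1}\,(B+1-k)(2A+B+2-k),
\end{equation*}
with the factor $(B+1-k)(2A+B+2-k)$ coming from the factorization $m(m+1)-A(A+1)=(m-A)(m+A+1)$ at $m=A+B-k+1$. The main obstacle is now purely arithmetic: after substituting the closed form of $c_{A,B,j}$, dividing through by $c_{A,B,k}$, and clearing denominators, the recursion collapses to the elementary polynomial identity
\begin{equation*}
(A+1-k)(A+B+1-k) + k(2A+B+2-k) = (A+1)(A+B+1),
\end{equation*}
which is a routine expansion. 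The boundary cases $k=0$ (where only the first summand contributes, and $c_{A+1,B,0}=c_{A,B,0}=1$) and $k=A+1$ (where only the second summand contributes, and the vanishing of $1/(B-A-1)!$ when $B<A+1$ handles the degenerate range) are checked separately in the same spirit. Once these arithmetic verifications are done, the induction closes without further subtlety.
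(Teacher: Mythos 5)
Your proposal is correct, and it reaches the identity by a genuinely different route than the paper. You share the first step — since on $\S^n$ every $P_{2N}$ is the polynomial $\prod_{k=0}^{N-1}(P_2+k(k+1))$ in $P_2$, the claim reduces to an identity in $\mathbb{R}[T]$ (the paper's \eqref{prep}) — but from there the mechanisms diverge. The paper substitutes $x=-y(y+1)$, writes $p_{2N}(-y(y+1))=(x_1)_N(x_2)_N$ as a product of Pochhammer symbols with $x_1+x_2=1$, recognizes the sum as a terminating ${}_3F_2$, and evaluates it in one stroke by the Pfaff--Saalsch\"utz summation formula. You instead induct on $A$: multiplying the level-$A$ identity by $T+A(A+1)$ and using $(T+A(A+1))p_{2m}=p_{2(m+1)}+(A(A+1)-m(m+1))p_{2m}$ converts the claim into the Pascal-type recursion $c_{A+1,B,k}=c_{A,B,k}+(B+1-k)(2A+B+2-k)\,c_{A,B,k-1}$, and after dividing by $c_{A,B,k}$ this is exactly the identity $(A+1-k)(A+B+1-k)+k(2A+B+2-k)=(A+1)(A+B+1)$, which checks out; your sign bookkeeping via the shift $k=j+1$ and the factorization $m(m+1)-A(A+1)=(m-A)(m+A+1)$ at $m=A+B+1-k$ is right. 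One point worth making explicit: besides $k=0$ and $k=A+1$, when $A>B$ the interior range $B<k\le A$ is also degenerate (division by $c_{A,B,k}=0$ is not available there), but the recursion still holds trivially because $c_{A+1,B,k}$, $c_{A,B,k}$ vanish and either $c_{A,B,k-1}=0$ or the factor $B+1-k=0$ — the same $1/(-m)!=0$ convention you already invoke. As for trade-offs: the paper's hypergeometric evaluation is essentially instantaneous given the classical formula, and the same machinery (Chu--Vandermonde) recurs in Lemma \ref{ps} and Lemma \ref{finale}, so it fits the paper's toolkit; your induction is completely elementary and self-contained, at the cost of the index-shift bookkeeping and the boundary-case checks.
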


\begin{proof} \eqref{prep-op} is equivalent to the polynomial identity
\begin{equation}\label{prep}
p_{2A} p_{2B} = \sum_{j=0}^A (-1)^j \frac{A ! B! (A\!+\!B)!}
{j!(A\!-\!j)!(B\!-\!j)!(A\!+\!B\!-\!j)!} p_{2(A+B-j)},
\end{equation}
where
$$
p_{2N}(x) \st \prod_{j=0}^{N-1} (x+j(j+1)).
$$
We write
\begin{equation}\label{poch}
p_{2N}(-y(y+1)) = \prod_{j=0}^{N-1} (-y+j)(y+1+j) = (x_1)_N (x_2)_N
\end{equation}
with $x_1 = -y$ and $x_2 = y+1$. Now we substitute the (far)
right-hand side of \eqref{poch} on the right-hand side of
\eqref{prep} and write the result in hypergeometric notation:
\begin{multline*}
\sum_{j=0}^A (-1)^j \frac{A ! B!
(A\!+\!B)!}{j!(A\!-\!j)!(B\!-\!j)!(A\!+\!B\!-\!j)!}
(x_1)_{A+B-j} (x_2)_{A+B-j} \\
= (x_1)_{A+B} (x_2)_{A+B} \; {}_3 F_2 \left[-A\!-\!B,-A,-B ; \atop
1\!-\!A\!-\!B\!-\!x_1, 1\!-\!A\!-\!B\!-\!x_2 \right].
\end{multline*}
The ${}_3 F_2$-series can be evaluated by means of
Pfaff-Saalsch\"utz summation formula (\cite{AAR}, Theorem 2.2.6)
$$
{}_3 F_2 \left[a,b,-n ; \atop c, 1+a+b-c-n\right] =
\frac{(c-a)_n(c-b)_n}{(c)_n (c-a-b)_n},
$$
where $n$ is a non-negative integer. If we apply the formula (here
we use $x_1+x_2 = 1$), then after little simplification we obtain
$$
(x_1)_A (x_1)_B (x_2)_A (x_2)_B = (p_{2A} p_{2B})(-y(y\!+\!1)),
$$
i.e., the left-hand side of \eqref{prep}. \end{proof}

The following result provides a complete description of the partial
sums $S_{(a,N)}$. Its proof extends the proof of Lemma \ref{LPP}.

\begin{lemm}\label{ps} On $\S^n$,
\begin{multline}\label{ps-form}
\sum_{J, \; a+ |J|=N} m_{(a,J)} P_{2J} \\ = \binom{N\!-\!1}{a\!-\!1}
\sum_{k=0}^{N-a-1} (-1)^{N-a-k} \binom{N}{k}
\frac{(N\!-\!a)!(N\!-\!a\!-\!1)!}{(N\!-\!a\!-\!k)!
(N\!-\!a\!-\!k\!-\!1)!} P_{2(N-a-k)}
\end{multline}
for fixed $N \ge 2$ and $1 \le a \le N\!-\!1$.
\end{lemm}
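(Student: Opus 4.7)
I would prove the lemma by induction on $N$. For $N = 2$ we must have $a = 1$, the only composition is $J = (1)$, and both sides of \eqref{ps-form} reduce at once to $-P_2$.

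For the inductive step, fix $N \geq 3$ and assume \eqref{ps-form} holds at all smaller values of the size. Given $1 \leq a \leq N-1$, I would split each composition $J$ with $a + |J| = N$ as $J = (b, K)$, with $b \geq 1$ and $K$ possibly empty. The case $K = \emptyset$ contributes the single term $m_{(a, N-a)} P_{2(N-a)}$. For $K \neq \emptyset$ (equivalently $1 \leq b \leq N - a - 1$), the factorization \eqref{bino},
\[
m_{(a, b, K)} = -\frac{1}{a+b}\binom{N}{a}^2 \frac{a(N-a)}{N}\, m_{(b, K)},
\]
yields the recursion
\begin{multline*}
\sum_{\substack{J\\ a+|J|=N}} m_{(a,J)} P_{2J} = m_{(a,N-a)} P_{2(N-a)} \\
- \binom{N}{a}^2 \frac{a(N-a)}{N} \sum_{b=1}^{N-a-1} \frac{1}{a+b}\, P_{2b}\! \sum_{\substack{K\\ b+|K|=N-a\\ |K|\geq 1}}\! m_{(b,K)} P_{2K}.
\end{multline*}
The inner sum in $K$ is precisely the left-hand side of \eqref{ps-form} with $(a, N)$ replaced by $(b, N-a)$, and the inductive hypothesis supplies it in closed form.

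After this substitution, applying Lemma \ref{develop} to each product $P_{2b}\cdot P_{2l}$ expresses the right-hand side as a linear combination of single operators $P_{2m}$. Extracting the coefficient of $P_{2m}$ (by setting $b + l - j = m$, where $j$ is the summation index from Lemma \ref{develop}) reduces the claim to a terminating combinatorial identity whose right-hand side is the target coefficient $\binom{N-1}{a-1}(-1)^m \binom{N}{N-a-m}\frac{(N-a)!(N-a-1)!}{m!(m-1)!}$. The main obstacle is evaluating this hypergeometric-type sum: following the template of the proof of Lemma \ref{develop}, I expect a Pfaff-Saalsch\"utz (balanced ${}_3F_2$) summation to collapse the inner $l$-sum, after which the remaining $b$-sum weighted by $1/(a+b)$ should be summed via a Beta-integral identity in the spirit of \eqref{beta}, which was already used in the proof of Lemma \ref{LPP}. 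The boundary case $a = N-1$ must be checked separately; there the $b$-sum is empty and both sides evaluate directly to $-(N-1) P_2$ via \eqref{m-double}.
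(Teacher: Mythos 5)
Your proposal follows essentially the same route as the paper's proof: induction on $N$, splitting $J=(b,K)$ and invoking \eqref{bino}, inserting the induction hypothesis for the inner sum, expanding the products via Lemma \ref{develop}, and then collapsing the resulting double sum coefficientwise by classical binomial summations. The only (harmless) deviation is in the anticipated tools at the end: the inner sum turns out to need only a plain Vandermonde convolution rather than Pfaff--Saalsch\"utz, and the $1/(a+b)$-weighted sum over $b$ is indeed of the Beta type \eqref{beta}, which is equivalent to the Chu--Vandermonde evaluation the paper uses.
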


Note that \eqref{strong} follows from \eqref{ps-form} by comparing
the coefficients of $\Delta^{|J|}$.

\begin{proof} We prove the claim by induction on $N$. Suppose that we have already
established \eqref{ps-form} up to $N-1$. By \eqref{bino}, the left-hand side of
\eqref{ps-form} equals
\begin{multline*}
- \frac{N!(N\!-\!1)!}{N \cdot a!(a\!-\!1)!(N\!-\!a)!(N\!-\!a\!-\!1)!} P_{2(N-a)} \\
- \frac{N!(N\!-\!1)!}{a!(a\!-\!1)!(N\!-\!a)!(N\!-\!a\!-\!1)!}
\sum_{b=1}^{N-a-1} \frac{1}{a+b} \sum_{K, \; b+|K|=N-a} m_{(b,K)} P_{2b} P_{2K}.
\end{multline*}
If we now use the induction hypothesis, then this sum simplifies to
\begin{multline*}
- \frac{N!(N\!-\!1)!}{N \cdot a!(a\!-\!1)!(N\!-\!a)!(N\!-\!a\!-\!1)!} P_{2(N-a)} \\
-\frac{N! (N\!-\!1)!}{a!(a\!-\!1)!(N\!-\!a)!(N\!-\!a\!-\!1)!} \sum_{b=1}^{N-a-1} \frac{1}{a+b}
\binom{N\!-\!a\!-\!1}{b\!-\!1} \\
\times \sum_{k=0}^{N-a-b-1} (-1)^{N-a-b-k} \binom{N\!-\!a}{k}
\frac{(N\!-\!a\!-\!b)! (N\!-\!a\!-\!b\!-\!1)!}
{(N\!-\!a\!-\!b\!-\!k)! (N\!-\!a\!-\!b\!-\!k\!-\!1)!}
P_{2(N-a-b-k)}P_{2b}.
\end{multline*}
The next step is to apply Lemma \ref{develop} to
$P_{2(N-a-b-k)}P_{2b}$. Thus, we arrive at the expression
\begin{multline*}
- \frac{N!(N\!-\!1)!}{N \cdot a!(a\!-\!1)!(N\!-\!a)!(N\!-\!a\!-\!1)!} P_{2(N-a)} \\
-\frac{N! (N\!-\!1)!}{a!(a\!-\!1)!} \sum_{b=1}^{N-a-1} \frac{1}{a+b}
\sum_{k=0}^{N-a-b-1} (-1)^{N-a-b-k}
\frac{(N\!-\!a\!-\!b\!-\!1)!}{k!(N\!-\!a\!-\!b\!-\!k\!-\!1)!} \\
\times \sum_{j=0}^{N-a-b-k} (-1)^j \frac{b}{j! (N\!-\!a\!-\!b\!-\!k\!-\!j)!(b\!-\!j)!
(N\!-\!a\!-\!k\!-\!j)!} P_{2(N-a-k-j)}.
\end{multline*}
At this point, we make an index transformation $s=j+k$. Then the
above expression can be written in the form
\begin{multline*}
- \frac{N!(N\!-\!1)!}{N \cdot a!(a\!-\!1)!(N\!-\!a)!(N\!-\!a\!-\!1)!} P_{2(N-a)} \\
- \frac{N! (N\!-\!1)!}{a!(a\!-\!1)!} \sum_{s=0}^{N-a-1} P_{2(N-a-s)} \sum_{b=1}^{N-a-1}
(-1)^{N-a-b-s} \frac{1}{a+b} \\ \times \frac{1}{(b-1)! (N\!-\!a\!-\!b\!-\!s)!(N\!-\!a\!-\!s)!}
\sum_{k=0}^s \binom{N\!-\!a\!-\!b\!-\!1}{k} \binom{b}{s\!-\!k}.
\end{multline*}
The sum over $k$ can be evaluated by means of the identity
$$
\sum_{k=0}^n \binom{r}{k} \binom{s}{n-k} = \binom{r+s}{n}
$$
(Vandermonde's convolution). Consequently, the above expression
simplifies to
\begin{multline*}
- \frac{N!(N\!-\!1)!}{N \cdot a!(a\!-\!1)!(N\!-\!a)!(N\!-\!a\!-\!1)!} P_{2(N-a)} \\
- \frac{N! (N\!-\!1)!}{a!(a\!-\!1)!} \sum_{s=0}^{N-a-1} P_{2(N-a-s)} \sum_{b=1}^{N-a-1}
(-1)^{N-a-b-s} \frac{1}{a\!+\!b} \\
\times \frac{1}{(b-1)! (N\!-\!a\!-\!b\!-\!s)!(N\!-\!a\!-\!s)!}
\binom{N\!-\!a\!-\!1}{s} \\
= - \frac{N! (N\!-\!1)!}{a!(a\!-\!1)!} \sum_{s=0}^{N-a-1} P_{2(N-a-s)} \sum_{b=1}^{N-a}
(-1)^{N-a-b-s} \frac{1}{a\!+\!b} \\
\times \frac{1}{(b-1)! (N\!-\!a\!-\!b\!-\!s)!(N\!-\!a\!-\!s)!}
\binom{N\!-\!a\!-\!1}{s}.
\end{multline*}
(The reader should observe the tiny difference in the summation
range for $b$ in the last line.) If we write the sum over $b$ in
hypergeometric notation, then we obtain the expression
\begin{multline*}
- \frac{N! (N\!-\!1)!}{a!(a\!-\!1)!} \sum_{s=0}^{N-a-1} P_{2(N-a-s)} \frac{1}{a\!+\!1} \\
\times \frac{1}{(N\!-\!a\!-\!s\!-\!1)!(N\!-\!a\!-\!s)!}
\binom{N\!-\!a\!-\!1}{s} {}_2 F_1 \left[a\!+\!1, a\!+\!s\!-\!N\!+\!1
; \atop a\!+\!2 \right].
\end{multline*}
The ${}_2 F_1$-series can be summed by means of the Chu--Vandermonde
summation formula (\cite{AAR}, Corollary 2.2.3)
$$
{}_2 F_1 \left[a,-n;\atop c\right] = \frac{(c-a)_n}{(c)_n},
$$
where $n$ is a non-negative integer. After some simplification, this
leads exactly to the right-hand side of \eqref{ps-form}.
\end{proof}

Lemma \ref{ps} shows that
\begin{equation}\label{s-partial}
\M_{2N} = (-1)^N \binom{N}{0} V_0^N + (-1)^{N-1} \binom{N}{1} V_1^N \pm
\cdots + \binom{N}{N\!-\!2} V_{N-2}^N,
\end{equation}
where
\begin{equation*}
V_0^N \st P_{2N} + \sum_{a=1}^{N-1} (-1)^a
\binom{N\!-\!1}{a\!-\!1} P_{2a} P_{2N-2a}
\end{equation*}
and
$$
V_k^N \st \sum_{a=1}^{N-1-k} (-1)^a \binom{N\!-\!1}{a\!-\!1}
\frac{(N\!-\!a)! (N\!-\!a\!-\!1)!}{(N\!-\!a\!-\!k)!
(N\!-\!a\!-\!k\!-\!1)!} P_{2a} P_{2N-2k-2a}
$$
for $k =1,\dots,N\!-\!2$. The following result proves Theorem
\ref{sphere}.

\begin{lemm}\label{finale} For all positive integers $N$, we have
\begin{equation}\label{final-sum}
\sum_{k=0}^{N-2} (-1)^{N-k} \binom{N}{k} V_k^N = N!(N\!-\!1)! P_2.
\end{equation}
\end{lemm}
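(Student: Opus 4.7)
The plan is to reduce Lemma \ref{finale} to a single-variable polynomial identity and then attack it by the same hypergeometric toolbox used in the proofs of Lemma \ref{develop} and Lemma \ref{ps}. By \eqref{univ} every GJMS-operator on $\S^n$ is a universal polynomial $p_{2m}(P_2) = \prod_{j=0}^{m-1}(P_2+j(j+1))$ in the Yamabe operator, so substituting $x$ for $P_2$ turns \eqref{final-sum} into the scalar identity
\[
\sum_{k=0}^{N-2}(-1)^{N-k}\binom{N}{k}V_k^N(x) = N!(N-1)!\,x,
\]
where $V_k^N(x)$ is obtained by replacing each $P_{2m}$ by $p_{2m}(x)$ in $V_k^N$. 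Since the polynomials $p_{2m}(x)$, $m=0,1,2,\ldots$, form a basis of the polynomial ring, it suffices to show that the expansion of the left-hand side in this basis has coefficient $N!(N-1)!$ at $p_2(x) = x$ and zero at every other $p_{2m}(x)$.

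The next step is to expand each product $p_{2a}(x)\,p_{2(N-k-a)}(x)$ appearing in $V_k^N(x)$ via Lemma \ref{develop}, so that the left-hand side becomes a triple sum — over $k$, over the inner summation index $a$, and over the product-expansion index $j$ — of multiples of single basis elements $p_{2m}(x)$. Interchanging the summations to collect, for each $m \ge 1$, the coefficient of $p_{2m}(x)$, the strategy is to collapse one inner binomial sum by the Vandermonde convolution $\sum_{k}\binom{r}{k}\binom{s}{n-k} = \binom{r+s}{n}$, exactly as in the last step of the proof of Lemma \ref{ps}, and then to evaluate the remaining ${}_3F_2$ or ${}_2F_1$ sum by Pfaff-Saalsch\"utz's theorem or the Chu-Vandermonde summation formula. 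The expected outcome is that the coefficient of $p_{2m}$ collapses to $0$ whenever $m \ge 2$ and to $N!(N-1)!$ for $m = 1$. As a consistency check, Lemma \ref{LPP} already forces the coefficient of $p_{2N}(x)$ (the top power of $x$) to vanish, and the target identity sharpens this to the vanishing of every coefficient except at $x = p_2(x)$.

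The main obstacle is bookkeeping. The summand $V_0^N$ is anomalous because it contains the bare term $P_{2N}$ which is not of product form; the range of $a$ in $V_k^N$ depends on $k$; and after applying Lemma \ref{develop} the product-expansion index is constrained by $j \le \min(a, N-k-a)$. Threading the hypergeometric parameters correctly through three successive index shifts — and matching numerator and denominator Pochhammers at each stage — is the delicate part of the argument, and mirrors the telescoping carried out in the inductive step of Lemma \ref{ps}. A more structural alternative, should the direct attack prove too intricate, is to introduce the variable $y$ with $x = -y(y+1)$ so that $p_{2m}(x) = (-y)_m(y+1)_m$, exploit the symmetry $y \mapsto -1-y$ to halve the effective degree, and pin the identity down by evaluation at the integer points $y = 0, 1, \ldots, N-1$, where the Pochhammer $(-y)_m$ forces most summands to vanish and the remainder collapses to small computable expressions.
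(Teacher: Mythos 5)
Your plan is essentially the paper's own proof: expand the products $P_{2a}P_{2(N-k-a)}$ via Lemma \ref{develop}, interchange the summations (the paper uses the shift $s=k+j$), collapse the inner binomial sum by the Vandermonde convolution, and then evaluate what remains — except that in the paper the closing step is even simpler than the Pfaff--Saalsch\"utz or ${}_2F_1$ evaluation you anticipate, since the surviving sum over $a$ is the alternating sum $\sum_{a}(-1)^a\binom{N\!-\!s\!-\!1}{a\!-\!1}$, which vanishes by the binomial theorem unless $s=N\!-\!1$, leaving exactly the $N!(N\!-\!1)!\,P_2$ term. Your preliminary substitution $P_{2m}\mapsto p_{2m}(x)$ is harmless but not needed, as the paper carries out the same bookkeeping directly on the operators $P_{2m}$.
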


\begin{proof} The assertion is equivalent to
\begin{multline}\label{fs}
P_{2N} + \sum_{k=0}^{N} (-1)^{N-k} \binom{N}{k} \\
\times \sum_{a=1}^{N-1-k}(-1)^a \binom{N\!-\!1}{a\!-\!1} \frac{(N\!-\!a)!(N\!-\!a\!-\!1)!}
{(N\!-\!a\!-\!k)! (N\!-\!a\!-\!k\!-\!1)!} P_{2a} P_{2N-2a-2k} = N! (N\!-\!1)! P_2
\end{multline}
(we apply the convention that empty sums vanish). We use Lemma
\ref{develop} to expand $P_{2a}P_{2N-2a-2k}$. Thus, the left-hand
side in \eqref{fs} becomes
\begin{multline*}
P_{2N} + \sum_{k=0}^{N} (-1)^{N-k} \binom{N}{k} \sum_{a=1}^{N-1-k} (-1)^a
\binom{N\!-\!1}{a\!-\!1} \frac{(N\!-\!a)!(N\!-\!a\!-\!1)!}{(N\!-\!a\!-\!k\!-\!1)!} \\
\times \sum_{j=0}^{a} (-1)^j \frac {a! (N\!-\!k)!} {j!(a\!-\!j)! (N\!-\!a\!-\!k\!-\!j)!
(N\!-\!k\!-\!j)!} P_{2(N-k-j)}.
\end{multline*}
We do again an index transformation: we let $s=k+j$ and hence
rewrite the above expression in the form
\begin{multline*}
P_{2N} + \sum_{s=0}^{N-1} \sum_{a=1}^{N-1} (-1)^{N+s+a} P_{2(N-s)}
\binom{N\!-\!1}{a\!-\!1} \frac {N!(N\!-\!a)!}{(N\!-\!a\!-\!s)!(N\!-\!s)!} \\
\times \sum_{k=0}^{N} \binom {N\!-\!a\!-\!1}k \binom{a}{s\!-\!k}.
\end{multline*}
The sum over $k$ can be evaluated by means of the Chu--Vandermonde
summation formula. Thus, we arrive at
\begin{multline*}
P_{2N} + \sum_{s=0}^{N-1} (-1)^{N+s}P_{2(N-s)} \frac
{N!(N\!-\!1)!^2}{(N\!-\!s)!s!(N\!-\!s\!-\!1)!^2}
\sum_{a=1}^{N-1} (-1)^a \binom{N\!-\!s\!-\!1}{a\!-\!1} \\
= \sum _{s=0}^{N-1} (-1)^{N+s} P_{2(N-s)} \frac {N!(N\!-\!1)!^2}{(N\!-\!s)!s!(N\!-\!s\!-\!1)!^2}
\sum_{a=1}^{N} (-1)^a \binom {N\!-\!s\!-\!1}{a\!-\!1}.
\end{multline*}
(The reader should observe the tiny difference in the summation
range for $a$ in the last line.) Finally, the binomial theorem
yields that the sum over $a$ always vanishes except if $s=N-1$. This
leads directly to the right-hand side of \eqref{fs}.
\end{proof}

Finally, we observe that for Einstein metrics with non-vanishing
scalar curvature, the formula in Conjecture \ref{formula} follows
from Theorem \ref{sphere} (and its analog on the real hyperbolic
space). In fact, for such metrics the GJMS-operators are given by
the product formula \eqref{einstein}, and a rescaling argument gives
\begin{equation}\label{rescal}
\M_{2N} = N!(N\!-\!1)! c^{N-1} P_2, \quad c =
\frac{\tau}{n(n\!-\!1)}.
\end{equation}
But this relation implies
$$
\M_{2N}^0 = - 2^{N-1} N!(N\!-\!1)! \delta (\Rho^{N-1}\#d)
$$
by using
$$
\Rho = \frac{\tau}{2n(n\!-\!1)}g.
$$

It is natural to summarize these results in terms of generating
functions. One should compare the following result with the version
\eqref{gf-cf-m} of Conjecture \ref{formula}.

\begin{corr}\label{gf-M-E} For Einstein metrics,
$$
\sum_{N \ge 1} \V_{2N} (r^2/4)^{N-1} = \delta \left((1-r^2/2 \,
\Rho)^{-2} \# d \right) + \left(\f-1\right) \tr (\Rho
(1-r^2/2\,\Rho)^{-2}),
$$
where $\V_{2N}$ is defined in \eqref{V-def}.
\end{corr}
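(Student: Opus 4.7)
The approach is to show that both sides of the asserted identity collapse to $-(1-cr^2/4)^{-2} P_2$, where $c = \tau/(n(n-1))$, after which equality is immediate.

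First I would exploit \eqref{rescal}, which states that $\M_{2N} = N!(N-1)! c^{N-1} P_2$ for any Einstein metric with non-vanishing scalar curvature. Inserting this into the definition \eqref{V-def} of $\V_{2N}$ gives $\V_{2N} = -N c^{N-1} P_2$, so summing against $(r^2/4)^{N-1}$ and using the standard identity $\sum_{N\ge 1} N x^{N-1} = (1-x)^{-2}$ yields
$$\sum_{N\ge 1} \V_{2N} (r^2/4)^{N-1} = -\frac{P_2}{(1-cr^2/4)^2}.$$

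Next I would analyze the right-hand side. For an Einstein metric, $\Ric = (\tau/n)g$ combined with $(n-2)\Rho = \Ric - \J g$ forces $\Rho = (c/2)g$; viewed as an endomorphism of $TM$ by index-raising, $\Rho$ equals $(c/2)\,\mathrm{Id}$. Therefore $(1-r^2/2\,\Rho)^{-2}$ is the scalar $(1-cr^2/4)^{-2}$ times the identity, its action $\#$ on a one-form reduces to scalar multiplication, and using the paper's convention $\delta d = -\Delta$ one finds
$$\delta\bigl((1-r^2/2\,\Rho)^{-2} \# d\bigr) = -(1-cr^2/4)^{-2}\Delta.$$
Simultaneously $\tr\bigl(\Rho(1-r^2/2\,\Rho)^{-2}\bigr) = (nc/2)(1-cr^2/4)^{-2}$, since $\tr(\mathrm{Id}) = n$.

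Finally I would collect terms: the right-hand side becomes
$$-(1-cr^2/4)^{-2}\Delta + \left(\f-1\right)\frac{nc}{2}(1-cr^2/4)^{-2} = -(1-cr^2/4)^{-2}\left[\Delta - \left(\f-1\right)\J\right],$$
where I have used $\J = \tau/(2(n-1)) = nc/2$ to identify the coefficient. The bracket is precisely the Yamabe operator $P_2$, matching the first computation. There is no substantive obstacle once \eqref{rescal} is invoked; the argument reduces to elementary power-series summation plus the observation that $\Rho$ acts as a scalar on $TM$ in the Einstein case. The only care required is consistent bookkeeping of the sign conventions ($\delta d = -\Delta$) and the algebraic identity $(\f-1)\cdot nc/2 = (\f-1)\J$.
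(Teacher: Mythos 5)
Your proof is correct and rests on the same ingredients as the paper's: the rescaling identity \eqref{rescal}, the Einstein relation $\Rho=\tfrac{\tau}{2n(n-1)}g$, and the series $\sum_{N\ge 1}Nx^{N-1}=(1-x)^{-2}$. The only cosmetic difference is that you collapse both sides to the closed form $-(1-cr^2/4)^{-2}P_2$, whereas the paper treats the non-constant parts $\V_{2N}^0$ and the constant terms $\V_{2N}(1)$ separately and matches them to the two summands on the right-hand side; the substance is the same.
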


\begin{proof} The identity $x (1-tx)^{-2} = \sum_{N \ge 1} N t^{N-1} x^N$ shows
that
\begin{equation*}
\sum_{N \ge 1} \V_{2N}^0 (r^2/4)^{N-1} = \sum_{N \ge 1} N \delta
(\Rho^{N-1} \#d) (r^2/2)^{N-1} = \delta ( (1-r^2/2 \, \Rho)^{-2} \#
d).
\end{equation*}
Moreover, \eqref{rescal} gives
\begin{align*}
\sum_{N \ge 1} \V_{2N}(1) (r^2/4)^{N-1} & = \left(\f\!-\!1\right)
\sum_{N \ge 1} N
\left( \frac{\tau}{n(n\!-\!1)}\right)^{N-1} \J (r^2/4)^{N-1} \\
& = \left(\f\!-\!1\right) \sum_{N \ge 1} N \left(
\frac{\tau}{2n(n\!-\!1)}\right)^N n (r^2/2)^{N-1} \\
& = \left(\f\!-\!1\right) \sum_{N \ge 1} N \tr (\Rho^N)
(r^2/2)^{N-1} \\
& = \left(\f\!-\!1\right) \tr (\Rho (1-r^2/2\,\Rho)^{-2}).
\end{align*}
The proof is complete.
\end{proof}

%%%%%%%%%%%%%%%%%%%%%%%%%%%%%%%%%%%%%%%%%%%%%%%%%%%%%%%%%%%%%%%%%%%%%%%%%%%%%
\section{Pseudo-spheres}\label{ps-sphere}

Here we discuss a special case of the literal extension of
Conjecture \ref{formula} to pseudo-Riemannian metrics. We consider
the conformally flat pseudo-spheres
$$
\S^{(q,p)} = \S^q \times \S^p, \; p \ge 1, \; q \ge 1
$$
with the metrics $g_{\S^q} - g_{\S^p}$ given by the round metrics on
the factors. Through this case, the theory is connected with
representation theory as follows. The Yamabe operators on the round
spheres have trivial kernels. But the kernel of the Yamabe operator
on $\S^{(q,p)}$ realizes an interesting infinite-dimensional
representation of $O(q+1,p+1)$. It was analyzed in detail in
\cite{ko-1} -- \cite{ko-3} and \cite{km}. These works illustrate the
interplay between conformal geometry, representation theory and
classical analysis. In particular, Kobayashi and {\O}rsted proved

\begin{thm}[\cite{ko-1}, Theorem 3.6.1] $\ker (P_2) \ne 0$ iff
$p+q \in 2 \n$. If $\ker(P_2) \ne 0$ and $(p,q) \ne (1,1)$, then the
kernel is an irreducible representation of $O(q\!+\!1,p\!+\!1)$ with
an unitarizable underlying Harish-Chandra module.
\end{thm}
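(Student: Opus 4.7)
The plan is to exploit the product structure of $\S^{(q,p)}$ to reduce the kernel problem to an elementary spectral calculation on $K = O(q+1)\times O(p+1)$-isotypic components, and then to invoke the representation theory of $\mathfrak{o}(q+1,p+1)$ to upgrade the resulting module to an irreducible unitarizable $(\mathfrak{g},K)$-module. The metric $g = g_{\S^q} \oplus (-g_{\S^p})$ yields $\Delta_g = \Delta_{\S^q} - \Delta_{\S^p}$ and $\tau_g = q(q-1) - p(p-1) = (q-p)(n-1)$, so $\J_g = (q-p)/2$. Decomposing $C^\infty(\S^{(q,p)})$ under $K$ as
\[
\bigoplus_{k,l\ge 0}\mathcal{H}^k(\S^q)\otimes\mathcal{H}^l(\S^p)
\]
into spherical harmonics diagonalises $\Delta_g$, and $P_2$ acts on $\mathcal{H}^k\otimes\mathcal{H}^l$ by the scalar $-k(k+q-1)+l(l+p-1)-(\f-1)\J_g$.

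Writing $\alpha_k = k+(q-1)/2$ and $\beta_l = l+(p-1)/2$, the identity $(p-1)^2-(q-1)^2 = -(q-p)(n-2)$ shows that the scalar curvature correction cancels the constant parts of the Laplace eigenvalues exactly, so that $P_2$ acts by the clean scalar
\[
\lambda_{k,l} = \beta_l^2 - \alpha_k^2 = (\beta_l-\alpha_k)(\beta_l+\alpha_k).
\]
Since $\alpha_k,\beta_l\ge 0$, the factor $\beta_l+\alpha_k$ vanishes only when $\alpha_k = \beta_l = 0$, which forces $p=q=1$ and $k=l=0$. The factor $\beta_l-\alpha_k$ vanishes iff $k-l = (p-q)/2$, which admits solutions $k,l\in\Z_{\ge 0}$ iff $p\equiv q\pmod 2$, i.e., iff $p+q\in 2\n$. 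This proves the ``iff'' assertion, and in the non-degenerate case $(p,q)\neq(1,1)$ with $p+q$ even identifies the kernel as the single multiplicity-free ladder
\[
\ker P_2 = \bigoplus_{l-k = (q-p)/2,\; k,l\ge 0}\mathcal{H}^k(\S^q)\otimes\mathcal{H}^l(\S^p).
\]

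The remaining and genuinely hard step is the representation-theoretic identification. The conformal covariance of $P_2$ under the standard conformal action of $G = O(q+1,p+1)$ on $\S^{(q,p)}$ forces $\ker P_2$ to be a $(\mathfrak{g},K)$-submodule of $C^\infty(\S^{(q,p)})$. The $K$-type pattern above is precisely that of a ladder representation attached to the minimal nilpotent orbit of $\mathfrak{o}(q+1,p+1)_\c$, so the expectation is that $\ker P_2$ realises exactly this representation. For irreducibility one writes explicitly the non-compact generators of $\mathfrak{g}$, namely the infinitesimal conformal vector fields on $\S^{(q,p)}$ transverse to $\mathfrak{k}$, and verifies that their matrix coefficients between neighbouring ladder rungs $(k,l)\mapsto(k\pm 1,l\pm 1)$ are explicit non-zero rational functions of $(k,l)$, so that no proper submodule is stable. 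Unitarizability then follows because the $L^2$ inner product is positive on each $K$-type and the shift generators are mutually formally adjoint up to an explicit sign, so the invariant Hermitian form induced on $\ker P_2$ is positive definite. The exclusion of $(p,q) = (1,1)$ is essential: in that case the branch $\alpha_k+\beta_l = 0$ contributes the constant functions and breaks the single-ladder structure, so the kernel becomes reducible. This representation-theoretic analysis constitutes the main obstacle and is the substance of Kobayashi--{\O}rsted's work.
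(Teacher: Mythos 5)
The paper does not prove this statement at all: it is imported verbatim as [KO1, Theorem 3.6.1] of Kobayashi--{\O}rsted, so there is no in-paper argument to compare yours with; your proposal has to stand on its own. Its first half does: using $P_2=-B^2+C^2$ (consistent with the factorization in Theorem \ref{b-mol}), the action of $P_2$ on $\mathcal{H}^k(\S^q)\otimes\mathcal{H}^l(\S^p)$ by the scalar $\beta_l^2-\alpha_k^2$ with $\alpha_k=k+\tfrac{q-1}{2}$, $\beta_l=l+\tfrac{p-1}{2}$ is correct, and the parity analysis correctly yields $\ker(P_2)\neq 0$ iff $p+q\in 2\n$, together with the multiplicity-free ladder of $K$-types $l-k=(q-p)/2$ supporting the ($K$-finite) kernel.

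The genuine gap is the second sentence of the theorem, which is its actual content: irreducibility and unitarizability are only announced, not proved, and the sketch of unitarizability is flawed as stated. The action of $O(q\!+\!1,p\!+\!1)$ on $C^\infty(\S^{q,p})$ relevant here is the twisted (conformal weight $\f-1$) degenerate principal series, and at this real parameter the $L^2$ pairing is \emph{not} $\mathfrak{g}$-invariant; so ``the $L^2$ inner product is positive on each $K$-type'' does not by itself give a positive definite invariant form. Because the ladder is multiplicity free, any invariant Hermitian form is diagonal with constants $c_k$ on the rungs, and one must compute the recursion $c_{k+1}/c_k$ from the matrix coefficients of the non-compact generators and check positivity --- exactly the computation (together with the nonvanishing of those coefficients, which gives irreducibility) that you defer to Kobayashi--{\O}rsted. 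As long as that is deferred, your argument proves only the first assertion. A minor further inaccuracy: your explanation of why $(p,q)=(1,1)$ is excluded is off --- the branch $\alpha_k+\beta_l=0$ contributes only the point $k=l=0$, which already lies on the ladder; the real issue is that for $p=q=1$ the kernel is the solution space of the wave equation on the torus and splits into ``chiral'' submodules, so irreducibility genuinely fails there for a different reason.
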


More generally, all GJMS-operators are intertwining operators for
principal series representations of $O(q+1,p+1)$ which are induced
from a maximal parabolic subgroup. This fact leads to the following
reformulation of results of Mol{\v{c}}anov.

\begin{thm}[\cite{bran-2}, Theorem 6.2]\label{b-mol} On $\S^{(q,p)}$,
the GJMS-operators factorize as
\begin{align}\label{op-even}
P_{4N} & = \prod_{j=1}^N (B\!+\!C\!+\!(2j\!-\!1))(B\!-\!C\!-\!(2j\!-\!1))
(B\!+\!C\!-\!(2j\!-\!1))(B\!-\!C\!+\!(2j\!-\!1)) \nonumber \\
& = \prod_{j=1}^N \left[(B^2\!-\!C^2)^2\!-\!2(2j\!-\!1)^2
(B^2\!+\!C^2)\!+\!(2j\!-\!1)^4\right]
\end{align}
and
\begin{align}\label{op-odd}
P_{4N+2} & = (-B^2\!+\!C^2)
\prod_{j=1}^N (B\!+\!C\!+\!2j)(B\!-\!C\!-\!2j)(B\!+\!C\!-\!2j)(B\!-\!C\!+\!2j)
\nonumber \\
& = (-B^2\!+\!C^2) \prod_{j=1}^N \left[(B^2\!-\!C^2)^2\!-\!2(2j)^2
(B^2\!+\!C^2)\!+\!(2j)^4\right],
\end{align}
where
\begin{equation}\label{BC-pseudo}
B^2 = -\Delta_{\S^q} + \left(\frac{q-1}{2}\right)^2 \quad \mbox{and}
\quad C^2 = -\Delta_{\S^p} + \left(\frac{p-1}{2}\right)^2.
\end{equation}
\end{thm}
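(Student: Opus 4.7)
My plan is to reduce the theorem to computing the eigenvalues of $P_{2N}$ on K-types of $\S^{q,p}$, using the conformal flatness of the pseudo-sphere together with the Fefferman--Graham ambient construction.

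\textbf{Step 1 (K-type reduction).} The GJMS-operators $P_{2N}$ together with $B^2, C^2$ from \eqref{BC-pseudo} are all invariant under the isometry group $K = O(q+1) \times O(p+1)$ of $\S^{q,p}$, and are therefore diagonalized by the decomposition $L^2(\S^{q,p}) = \bigoplus_{k,\ell \ge 0} V_k \otimes V_\ell$ into bidegree spherical harmonics. On $V_k \otimes V_\ell$, the operator $B^2$ acts as the scalar $(k+(q-1)/2)^2$ and $C^2$ as $(\ell+(p-1)/2)^2$. The products on the right-hand sides of \eqref{op-even}--\eqref{op-odd} expand to polynomials in $B^2, C^2$, so it suffices to match the eigenvalue of $P_{2N}$ on each K-type with the corresponding scalar value.

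\textbf{Step 2 (Ambient realization).} Since $\S^{q,p}$ is conformally flat, the Fefferman--Graham ambient metric is the flat pseudo-Euclidean metric on $\r^{q+1,p+1}$, and the null cone $\mathcal{N} = \{|x|^2 = |y|^2 > 0\}$ fibres over $\S^{q,p}$. For a conformal density $f$ of weight $\lambda = -n/2 + N$ one has $P_{2N} f = \square^N \tilde f|_\mathcal{N}$ with $\square = \sum_i \partial_{x_i}^2 - \sum_j \partial_{y_j}^2$ and $\tilde f$ any homogeneous extension of weight $\lambda$. In bipolar coordinates $x = t\omega, y = s\omega'$ I take the symmetric Ansatz $\tilde f = t^a s^b Y_k(\omega) Y_\ell(\omega')$ with $a = (N-q)/2, b = (N-p)/2$, so that $a+(q-1)/2 = b+(p-1)/2 = (N-1)/2$. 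Using $\Delta_x = \partial_t^2 + (q/t)\partial_t + t^{-2}\Delta_{S^q}$ (analogously for $s$) and $\alpha^2 + (q-1)\alpha - k(k+q-1) = (\alpha+(q-1)/2)^2 - B^2$, one finds
\begin{multline*}
\square(t^{a-2i} s^{b-2j} Y_k Y_\ell) = (z_i - B^2)\, t^{a-2i-2} s^{b-2j} Y_k Y_\ell \\
- (z_j - C^2)\, t^{a-2i} s^{b-2j-2} Y_k Y_\ell,
\end{multline*}
where $z_r := ((N-1)/2 - 2r)^2$. Iterating $N$ times and restricting to $t=s$ yields, by a lattice-path count,
\[
E_{2N}(B^2, C^2) = \sum_{i+j=N} \binom{N}{i} (-1)^j \prod_{r=0}^{i-1}(z_r - B^2) \prod_{r=0}^{j-1}(z_r - C^2).
\]
The binomial factor arises because every lattice path from $(0,0)$ to $(i,j)$ contributes the same product: at each step the factor picked up depends only on the running coordinates, not on the order of preceding steps.

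\textbf{Step 3 and main obstacle.} It remains to prove that $E_{2N}(B^2, C^2)$ coincides with the Mol{\v{c}}anov product in \eqref{op-even} or \eqref{op-odd}. Both are polynomials in $(B^2, C^2)$ of degree $N$ in each variable with matching leading term $(C^2-B^2)^N$. A direct approach would verify that $E_{2N}$ vanishes on each of the hyperplanes $B \pm C = \pm(2j-1)$ (even case) or $B \pm C = \pm 2j$ together with $B = \pm C$ (odd case), via the identity $(B+C+c)(B-C-c)(B+C-c)(B-C+c) = (B^2-C^2)^2 - 2c^2(B^2+C^2) + c^4$; substituting these loci into the sum collapses the inner products by Chu--Vandermonde-type identities. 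The combinatorial identity at the heart of this step is the technical crux. A cleaner route, essentially Mol{\v{c}}anov's original argument, is to identify $P_{2N}$ up to normalization with the Knapp--Stein intertwining operator of $G = O(q+1,p+1)$ at spectral parameter $\lambda = N$: its K-type scalar is a Gindikin--Karpelevich ratio of four Gamma functions in $B \pm \lambda, C \pm \lambda$, which at integer $\lambda$ telescopes directly to the product of $4N$ (respectively $4N+2$) linear factors displayed in \eqref{op-even}--\eqref{op-odd}. This representation-theoretic route sidesteps the bare-hands combinatorics via the general theory of Knapp--Stein intertwiners at integral parameters.
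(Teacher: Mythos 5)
Your Steps 1 and 2 are sound: the $K$-type reduction is legitimate ($P_{2N}$ commutes with the isometry group $O(q+1)\times O(p+1)$ and the decomposition into bidegree spherical harmonics is multiplicity-free), and the flat-ambient computation on the null cone in $\r^{q+1,p+1}$ is correct as far as it goes — your lattice-path formula for $E_{2N}(B^2,C^2)$ does reproduce $-B^2+C^2$ for $P_2$ and $(B^2-C^2)^2-2(B^2+C^2)+1$ for $P_4$, in agreement with \eqref{op-even}--\eqref{op-odd}. The genuine gap is Step 3, which you yourself call ``the technical crux'' and then do not carry out: nothing in the proposal proves that $E_{2N}$ vanishes on the loci $B\pm C=\pm(2j-1)$ (resp.\ $B\pm C=\pm 2j$ together with $B=\pm C$) for general $N$; the Chu--Vandermonde-type collapse is only named, not performed. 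Until that identity is established, the factorization is verified only in low order, so the argument as written is an outline, not a proof.

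The fallback you offer — identifying $P_{2N}$ up to normalization with the Knapp--Stein intertwiner at integral parameter and invoking the Gindikin--Karpelevich/Mol\v{c}anov Gamma-factor spectrum — is not a proof within your argument but a deferral to precisely the result being quoted: the paper itself gives no proof of Theorem \ref{b-mol}, citing it as Branson \cite{bran-2}, Theorem 6.2, a reformulation of Mol\v{c}anov's computation of the $K$-spectra of these intertwinors. Moreover, that route has its own unaddressed points: at these integral parameters the induced representations are reducible, so uniqueness (up to scalar) of the intertwiner is not automatic, the Knapp--Stein operator must be suitably normalized before its $K$-spectrum telescopes from Gamma ratios to the displayed polynomial, and the overall constant still has to be fixed (your leading-symbol remark would do this, but it is not spelled out). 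So either complete the combinatorial identity in Step 3 — which would turn your ambient-cone derivation into a genuinely self-contained alternative to the citation — or concede that the proposal reduces to the reference the paper already relies on.
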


\begin{corr}\label{Q-PS} On $\S^{(q,p)}$,
\begin{equation}\label{Q-PS-form}
Q_{2N} = \prod_{j=1}^{N-1} \left( \frac{p+q}{2}\!+\!N\!-\!2j \right)
\prod_{j=0}^{N-1} \left( \frac{q-p}{2}\!-\!N\!+\!1\!+\!2j \right),
\; N \ge 1.
\end{equation}
\end{corr}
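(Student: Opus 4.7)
The strategy is to evaluate $P_{2N}(1)$ directly from the factorizations in Theorem~\ref{b-mol}, and then to divide by the prefactor $(-1)^N(\frac{n}{2}-N)$ coming from the defining relation $P_{2N}(1) = (-1)^N(\frac{n}{2}-N)Q_{2N}$. Since $\Delta_{\S^q}$ and $\Delta_{\S^p}$ annihilate constants, the operators $B^2$ and $C^2$ act on constants as the scalars $a^2 := \bigl(\tfrac{q-1}{2}\bigr)^2$ and $b^2 := \bigl(\tfrac{p-1}{2}\bigr)^2$ respectively. Consequently, each linear factor $B\pm C\pm k$ in the products \eqref{op-even} and \eqref{op-odd} can, on constants, be replaced by the numerical value $a\pm b\pm k$, where
$$
a+b = \tfrac{q+p-2}{2} = \tfrac{n}{2}-1, \qquad a-b = \tfrac{q-p}{2}.
$$

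First, for $2N=4M$ I pair the four factors $(B+C\pm(2j-1))(B-C\mp(2j-1))$ as
$$
\bigl[(a+b)^2-(2j-1)^2\bigr]\bigl[(a-b)^2-(2j-1)^2\bigr]
= \bigl(\tfrac{n}{2}-2j\bigr)\bigl(\tfrac{n}{2}+2j-2\bigr)\bigl(\tfrac{q-p}{2}-(2j-1)\bigr)\bigl(\tfrac{q-p}{2}+(2j-1)\bigr).
$$
The factor $\tfrac{n}{2}-2j$ at $j=M$ equals $\tfrac{n}{2}-N$ and cancels the prefactor in the definition of $Q_{2N}$. What remains on the $n$-side is the product of $\tfrac{n}{2}\pm 2k$ for suitable $k$, which I will show reorganizes into $\prod_{j=1}^{N-1}\bigl(\tfrac{n}{2}+N-2j\bigr)$; on the $(q-p)$-side one obtains the $2M$ values $\tfrac{q-p}{2}\pm(2j-1)$ for $j=1,\dots,M$, which fills out the range $\tfrac{q-p}{2}-N+1+2j$ for $j=0,\dots,N-1$.

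Second, for $2N=4M+2$ the extra prefactor $-B^2+C^2=(b-a)(b+a)=-\tfrac{(q-p)(n-2)}{4}$ supplies the sign $(-1)^{2M+1}$ needed to match $(-1)^N$, as well as the factor $\tfrac{n}{2}-1$ that completes the $n$-side product. The four-factor bracket $[(a+b)^2-(2j)^2][(a-b)^2-(2j)^2]$ yields $\bigl(\tfrac{n}{2}-1-2j\bigr)\bigl(\tfrac{n}{2}-1+2j\bigr)\bigl(\tfrac{q-p}{2}-2j\bigr)\bigl(\tfrac{q-p}{2}+2j\bigr)$. The critical factor $\tfrac{n}{2}-1-2M=\tfrac{n}{2}-N$ arises at $j=M$ and is cancelled by the definition. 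On the $(q-p)$-side, the prefactor $\tfrac{q-p}{2}$ combined with the values $\tfrac{q-p}{2}\pm 2j$ for $j=1,\dots,M$ produces the $2M+1$ values $\tfrac{q-p}{2}-N+1+2j$ for $j=0,\dots,N-1$.

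The only real work is the bookkeeping step of verifying that, in each parity, the symmetric pair of products indexed by $j=1,\dots,M$ reassembles (after including the surviving $\tfrac{n-2}{2}$ factor in the odd case) into the single products $\prod_{j=1}^{N-1}(\tfrac{n}{2}+N-2j)$ and $\prod_{j=0}^{N-1}(\tfrac{q-p}{2}-N+1+2j)$ appearing in \eqref{Q-PS-form}. This is a direct reindexing, but it is where sign conventions and off-by-one errors would be easiest to make; I would check the two parities separately and then also verify the small cases $N=1,2,3$ by hand against \eqref{Q-PS-form} and Corollary \ref{Q-S} (the limit $p\to 0$).
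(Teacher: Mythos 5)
Your proposal is correct and follows essentially the same route as the paper, which states \eqref{Q-PS-form} as an immediate corollary of the factorizations \eqref{op-even}--\eqref{op-odd}: evaluate on constants (so $B^2,C^2$ become $(\tfrac{q-1}{2})^2,(\tfrac{p-1}{2})^2$), pair the factors into $[(a+b)^2-k^2][(a-b)^2-k^2]$, cancel the critical factor $\tfrac{n}{2}-N$ against the prefactor in $P_{2N}(1)=(-1)^N(\tfrac{n}{2}-N)Q_{2N}$, and reindex; your sign and parity bookkeeping checks out, including the role of $-B^2+C^2$ in the $4M+2$ case and the consistency check with the $p\to 0$ specialization to \eqref{Q-S-form}.
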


For $p=0$, we have $C^2 = 1/4$ and a calculation shows that the
product formulas \eqref{op-even} and \eqref{op-odd} specialize to
\eqref{product}. Moreover, \eqref{Q-PS-form} is easily seen to
specialize to \eqref{Q-S-form}.

The following result extends Theorem \ref{sphere}.

\begin{thm}[\cite{jk}]\label{pseudo} On $\S^{(q,p)}$,
\begin{equation}\label{pseudo-even}
\M_{4N} = (2N)!(2N\!-\!1)! \left(\frac{1}{2}\!-\!B^2\!-\!C^2\right),
\; N \ge 1
\end{equation}
and
\begin{equation}\label{pseudo-odd}
\M_{4N+2} = (2N\!+\!1)!(2N)! (-B^2\!+\!C^2), \; N \ge 0.
\end{equation}
\end{thm}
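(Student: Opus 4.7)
The plan is to reduce Theorem \ref{pseudo} to a polynomial identity in two commuting variables, and then to establish that identity by adapting the combinatorial machinery of Section \ref{spheres}. Throughout, I work in the commutative subalgebra of operators on $\S^{(q,p)}$ generated by $B^2$ and $C^2$; by Theorem \ref{b-mol}, every GJMS-operator lies in this subalgebra, so in particular they pairwise commute.

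\medskip

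\emph{Separable factorization.} Introduce the auxiliary variables $X = B+C$ and $Y = B-C$, so that $B^2-C^2 = XY$ and $B^2+C^2 = (X^2+Y^2)/2$. The products \eqref{op-even} and \eqref{op-odd} then separate as
\begin{equation*}
P_{4N} = \phi_N(X)\phi_N(Y), \qquad P_{4N+2} = -XY\,\psi_N(X)\psi_N(Y),
\end{equation*}
with $\phi_N(t) = \prod_{j=1}^N (t^2-(2j-1)^2)$ and $\psi_N(t) = \prod_{j=1}^N (t^2-(2j)^2)$. Consequently, every $P_{2M}$ factors as
\begin{equation*}
P_{2M}(X,Y) = (-1)^M F_M(X) F_M(Y),
\end{equation*}
where $F_{2N}(t) = \phi_N(t)$ and $F_{2N+1}(t) = t\psi_N(t)$; note that $\deg F_M = M$ and $F_M$ is even (resp.\ odd) according as $M$ is even (resp.\ odd). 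For any composition $I$ of $M$, multiplicativity gives $P_{2I}(X,Y) = (-1)^M F_I(X) F_I(Y)$ with $F_I := \prod_j F_{I_j}$. Thus Theorem \ref{pseudo} is equivalent to the polynomial identity
\begin{equation*}
\sum_{|I|=M} m_I F_I(X) F_I(Y) = M!(M\!-\!1)! \cdot \Theta_M(X,Y),
\end{equation*}
where $\Theta_M(X,Y) = (1-X^2-Y^2)/2$ if $M$ is even and $\Theta_M(X,Y) = XY$ if $M$ is odd. (I have verified this by hand for $M \le 3$.)

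\medskip

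\emph{Combinatorial collapse.} The proof would follow the template of Lemmas \ref{ps} and \ref{finale}. First, establish a bivariate analog of Lemma \ref{develop} by expanding the product $F_a(t) F_b(t)$ as an explicit linear combination of $F_k(t)$ via Pfaff-Saalsch\"utz summation, with the parity constraint $k \equiv a+b \pmod{2}$ dictated by the parity convention above. With this expansion in hand, define the partial sums
\begin{equation*}
S_{(a,M)}(X,Y) := F_a(X) F_a(Y) \sum_{|J|=M-a} m_{(a,J)} F_J(X) F_J(Y),
\end{equation*}
and evaluate each $S_{(a,M)}$ in closed form by iterating the product expansion on the right factor by factor, exactly as in Lemma \ref{ps}. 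Finally, summing over $a$ and applying Chu-Vandermonde convolution once in $X$ and once in $Y$ should telescope the sum, as in Lemma \ref{finale}, to $M!(M-1)!\Theta_M(X,Y)$.

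\medskip

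\emph{Main obstacle.} The crux is the bookkeeping forced by the two parity classes of $F_M$: the product $F_a F_b$ expands into one of three regimes (even-even, odd-odd, or mixed), so intermediate sums bifurcate according to the parity profile of the composition $I$. The decisive check is that the "wrong parity" contributions cancel by componentwise Chu-Vandermonde, while the surviving terms accumulate to $M!(M\!-\!1)!\,\Theta_M$; in particular, this must explain the switch $\Theta_M = (1-X^2-Y^2)/2 \leftrightarrow XY$ with the parity of $M$. As a fallback, one may instead prove the identity by extracting coefficients of $X^i Y^j$ directly: since the right-hand side has total degree at most $2$, one would verify that the coefficient of every $X^i Y^j$ with $i+j > 2$ vanishes as an explicit hypergeometric identity in the coefficients $m_I$, and then match the three remaining degree-$\le 2$ coefficients.
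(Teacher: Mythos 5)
Your reduction is correct and well executed as far as it goes: on $\S^{(q,p)}$ the operators $B^2,C^2$ commute, Theorem \ref{b-mol} gives $P_{2M}=(-1)^M F_M(X)F_M(Y)$ with $X=B+C$, $Y=B-C$, and hence multiplicativity over compositions turns Theorem \ref{pseudo} into the polynomial identity $\sum_{|I|=M}m_I F_I(X)F_I(Y)=M!(M-1)!\,\Theta_M(X,Y)$; your signs and the dichotomy $\Theta_M=(1-X^2-Y^2)/2$ versus $XY$ check out (e.g.\ for $M=1,2$). This is indeed the natural starting point, and it is consistent with what the paper itself says about the proof, namely that it is carried out in \cite{jk} and ``rests on an extension of Lemma \ref{ps}''; the paper does not reproduce that argument.

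The genuine gap is that the identity itself --- which is the entire content of the theorem --- is never proved: your second and third paragraphs are a plan (``would follow the template'', ``should telescope'', plus a fallback), not an argument. Moreover, the plan as stated underestimates where the difficulty lies. In the sphere case the whole mechanism of Lemmas \ref{develop}, \ref{ps} and \ref{finale} works because $P_{2A}P_{2B}$ expands again as a linear combination of GJMS-operators $P_{2k}$, so the partial sums stay inside a singly-indexed family. On $\S^{(q,p)}$ this fails: even granting a one-variable linearization $F_aF_b=\sum_j c_j F_{a+b-2j}$ (itself asserted, not derived), applying it separately in $X$ and in $Y$ produces cross terms $F_j(X)F_k(Y)$ with $j\neq k$, which are not of the form $P_{2m}$ and not controlled by any statement you have set up. So the analog of Lemma \ref{ps} must be an induction over a doubly-indexed family $\{F_j(X)F_k(Y)\}$, and the analog of Lemma \ref{finale} must show that all coefficients outside the span of $1$, $X^2$, $Y^2$, $XY$ cancel --- a two-dimensional telescoping that your ``exactly as in Lemma \ref{ps}'' and the parity bookkeeping you flag do not address. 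This extension is precisely the new work done in \cite{jk}; without it (or without actually carrying out your coefficient-extraction fallback as explicit hypergeometric identities), the proposal establishes only the reformulation, not the theorem.
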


The proof of Theorem \ref{pseudo} rests on an extension of Lemma
\ref{ps}.

In view of $P_2 = -B^2+C^2$, the identities \eqref{pseudo-even} and
\eqref{pseudo-odd} are equivalent to the non-linear relations
\begin{equation}\label{nonlinear}
2 \M_{4N} = (2N)!(2N\!-\!1)! \left(P_4 - P_2^2\right) \quad
\mbox{and} \quad \M_{4N+2} = (2N\!+\!1)!(2N)! P_2
\end{equation}
of intertwining operators for $O(q+1,p+1)$.

Now Theorem \ref{pseudo} implies
\begin{align*}
\M_{4N}^0 & = (2N)!(2N\!-\!1)! (\Delta_{\S^q} + \Delta_{\S^p}), \; N \ge 1, \\
\M_{4N+2}^0 & = (2N\!+\!1)!(2N)! (\Delta_{\S^q} - \Delta_{\S^p}), \;
N \ge 0.
\end{align*}
But using
$$
2 \Rho = \begin{pmatrix} 1_q & 0 \\ 0 & -1_p
\end{pmatrix},
$$
these identities can be written in the form
\begin{align*}
\M_{4N}^0 & = - c_{2N} \delta (\Rho^{2N-1} \# d), \\
\M_{4N+2}^0 & = - c_{2N+1} \delta(\Rho^{2N} \# d).
\end{align*}
In other words, Theorem \ref{pseudo} confirms a special case of the
literal extension of Conjecture \ref{formula} to pseudo-Riemannian
metrics.

Finally, we observe that an easy calculation using the relations
\begin{align*}
(q/2-1) q/2 - (p/2-1) p/2 & = (q-1)^2/4
- (p-1)^2/4, \\
(q/2-1) q/2 + (p/2-1) p/2 & = (q-1)^2/4 + (p-1)^2/4 - 1/2
\end{align*}
yields the following analog of Corollary \ref{gf-M-E}.

\begin{corr}\label{gf-M-PS} On $\S^{q,p}$,
\begin{multline*}
\sum_{N \ge 1} \V_{2N} \, (r^2/4)^{N-1} \\[-4mm] = \delta((1 - r^2/2 \,
\Rho)^{-2} \#d) + \tr \left( \begin{pmatrix} q/2-1 & 0 \\ 0 &
p/2-1\end{pmatrix} \Rho (1-r^2/2 \, \Rho)^{-2}\right).
\end{multline*}
\end{corr}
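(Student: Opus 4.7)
The plan is to evaluate both sides of the claimed identity as explicit rational functions of $r$ via Theorem \ref{pseudo}, and then to match them using the two algebraic identities stated at the end of the excerpt.

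First, combining Theorem \ref{pseudo} with the definition $\V_{2k} = -\M_{2k}/((k-1)!)^2$, I obtain the closed forms
$$
\V_{4K} = -2K\!\left(\tfrac{1}{2} - B^2 - C^2\right),\ K \ge 1, \qquad \V_{4K+2} = (2K+1)(B^2 - C^2),\ K \ge 0.
$$
Setting $s = r^2/4$ and splitting the series on the left-hand side by the parity of $N$ gives
$$
\sum_{N \ge 1} \V_{2N}\, s^{N-1} = (B^2 - C^2)\sum_{K \ge 0}(2K+1)s^{2K} + \bigl(B^2 + C^2 - \tfrac{1}{2}\bigr)\sum_{K \ge 1} 2K s^{2K-1}.
$$
Summing the two elementary series and using the factorization $1 + s^2 \pm 2s = (1\pm s)^2$, this collapses to
$$
\frac{B^2}{(1-s)^2} - \frac{C^2}{(1+s)^2} - \frac{s}{(1-s^2)^2}.
$$

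Next I would expand the right-hand side using the block-diagonal description $2\Rho = \mathrm{diag}(1_q, -1_p)$. Viewed as an endomorphism of $TM$, the operator $\Rho$ has eigenvalues $\pm 1/2$ on the two factors, so $(1 - r^2/2\,\Rho)^{-2}$ is block-scalar with values $(1-s)^{-2}$ on $T\S^q$ and $(1+s)^{-2}$ on $T\S^p$. A direct computation, keeping careful track of the pseudo-Riemannian sign conventions for $\#$ and $\delta$, then yields
$$
\delta\bigl((1-r^2/2\,\Rho)^{-2}\#d\bigr) = -\frac{\Delta_{\S^q}}{(1-s)^2} + \frac{\Delta_{\S^p}}{(1+s)^2},
$$
while the trace term evaluates to
$$
\frac{(q/2-1)q/2}{(1-s)^2} - \frac{(p/2-1)p/2}{(1+s)^2}.
$$

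To conclude I would decompose $B^2$ and $C^2$ via \eqref{BC-pseudo} and compare coefficients. The differential parts of the two sides agree immediately. For the scalar parts, the hinted algebraic identities are precisely what translates $(q-1)^2/4$ and $(p-1)^2/4$ (from $B^2$, $C^2$) into $(q/2-1)q/2$ and $(p/2-1)p/2$ (from the trace) up to an overall $1/4$ in each numerator. The resulting discrepancy
$$
\frac{1/4}{(1-s)^2} - \frac{1/4}{(1+s)^2} - \frac{s}{(1-s^2)^2}
$$
vanishes by the elementary identity $(1-s)^{-2}-(1+s)^{-2} = 4s/(1-s^2)^2$. The only real obstacle is the bookkeeping around the pseudo-Riemannian sign conventions for $\#$, $\delta$ and the component Laplacians, together with the passage between bilinear-form and endomorphism incarnations of $\Rho^k$; no new input beyond Theorem \ref{pseudo} is required.
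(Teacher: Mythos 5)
Your computation is correct and follows essentially the route the paper intends: everything is derived from Theorem \ref{pseudo} together with the block description $2\Rho=\operatorname{diag}(1_q,-1_p)$ and the two displayed identities relating $(q/2-1)q/2\pm(p/2-1)p/2$ to $(q-1)^2/4\pm(p-1)^2/4$, which is exactly the ``easy calculation'' the paper leaves implicit. Your organization (splitting the series by parity of $N$ and comparing rational functions of $s=r^2/4$, with the leftover $\tfrac14\bigl((1-s)^{-2}-(1+s)^{-2}\bigr)=s/(1-s^2)^2$ absorbing the $-1/4$ shifts) is a legitimate fleshing-out of that sketch, and all signs and sums check out.
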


%%%%%%%%%%%%%%%%%%%%%%%%%%%%%%%%%%%%%%%%%%%%%%%%%%%%%%%%%%%%%%%%%%%%%%%%%%%%
\section{Residue polynomials}\label{polynomial}

We use the GJMS-operators $P_{2N}$ on $M$ to define a sequence of
polynomial families $P_{2N}^{res}(\lambda)$, $N \ge 1$ of
differential operators on $M$. Their definition is motivated by the
definition of the $Q$-polynomials
\begin{equation}\label{Q-pol-def}
Q_{2N}^{res}(\lambda) \st (-1)^{N-1} D_{2N}^{res}(\lambda)(1)
\end{equation}
as the constant terms of the residue families
$D_{2N}^{res}(\lambda)$ of \cite{J-book}. We describe the relation
to the operators $\M_{2N}$.

\begin{defn}\label{def-e} Let $P_2^{res}(\lambda) = P_2$, and
define the families $P_{2N}^{res}(\lambda)$ for $N \ge 2$
recursively by
\begin{multline}\label{rec-def}
P_{2N}^{res}(\lambda) = \prod_{k=1}^{N-1}
\left( \frac{\lambda+\f-2N+k}{k} \right) P_{2N} \\
+ \sum_{j=1}^{N-1} (-1)^j \prod_{k=1 \atop k \ne j}^N \left(
\frac{\lambda+\f-2N+k}{k-j} \right) P_{2j}
P_{2N-2j}^{res}\left(-\f\!+\!2N\!-\!j\right).
\end{multline}
\end{defn}

Formula \eqref{rec-def} can be regarded as a Lagrange interpolation
formula.

\begin{thm}\label{interpol} $P_{2N}^{res}(\lambda)$ is the unique
polynomial of degree $N-1$ which is characterized by the conditions
\begin{equation*}
P_{2N}^{res} \left(-\f+N \right) = (-1)^{N-1} P_{2N}
\end{equation*}
and
$$
P_{2N}^{res} \left(-\f\!+\!2N\!-\!j\right) = (-1)^j P_{2j}
P_{2N-2j}^{res} \left(-\f\!+\!2N\!-\!j\right)
$$
for $j=1,\dots,N-1$.
\end{thm}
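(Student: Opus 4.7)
The plan is to recognize \eqref{rec-def} as the explicit Lagrange interpolation formula associated to the $N$ prescribed values and then verify that it meets the stated requirements.

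First I would identify the interpolation nodes. The conditions specify values of $P_{2N}^{res}(\lambda)$ at the $N$ distinct points
$$
\lambda_j \st -\tfrac{n}{2} + 2N - j, \qquad j = 1,\dots,N-1, N,
$$
with $\lambda_N = -\tfrac{n}{2}+N$. Since a polynomial of degree $N-1$ is determined by its values at $N$ distinct points, uniqueness is automatic once existence is established, and the total degree of the right-hand side of \eqref{rec-def} in $\lambda$ is manifestly $\le N-1$.

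Next I would verify the interpolation conditions by observing that each summand in \eqref{rec-def} is the Lagrange basis element concentrated at one node. Rewrite $\lambda + \tfrac{n}{2} - 2N + k = \lambda - \lambda_k$. For the first summand, $\prod_{k=1}^{N-1}(\lambda-\lambda_k)/k$ vanishes at $\lambda_1,\dots,\lambda_{N-1}$, and at $\lambda_N$ the factor $\lambda_N - \lambda_k = k - N$ gives
$$
\prod_{k=1}^{N-1}\frac{k-N}{k} \;=\; (-1)^{N-1}.
$$
For the $j$-th term in the sum, $\prod_{k=1,k\ne j}^{N}(\lambda-\lambda_k)/(k-j)$ visibly vanishes at every $\lambda_{j'}$ with $j' \ne j$ (including $j'=N$), and at $\lambda = \lambda_j$ the numerator and denominator agree factor by factor, so the term contributes exactly $(-1)^j P_{2j} P_{2N-2j}^{res}(\lambda_j)$.

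Assembling these evaluations, the right-hand side of \eqref{rec-def} satisfies
$$
P_{2N}^{res}(\lambda_N) = (-1)^{N-1} P_{2N}, \qquad P_{2N}^{res}(\lambda_j) = (-1)^j P_{2j} P_{2N-2j}^{res}(\lambda_j) \; \text{for } 1 \le j \le N-1,
$$
which are precisely the conditions of the theorem. Combined with $\deg P_{2N}^{res} \le N-1$ and Lagrange uniqueness on $N$ distinct nodes, this completes the proof. The only step requiring any care is bookkeeping the shift $\lambda \mapsto \lambda+\tfrac{n}{2}-2N+k$ and the sign $(-1)^{N-1}$ produced by the first summand; no other obstacle arises since the formula was designed as a Lagrange interpolant.
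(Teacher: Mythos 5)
Your proof is correct and coincides with the paper's own (essentially implicit) argument: the paper simply remarks that \eqref{rec-def} "can be regarded as a Lagrange interpolation formula," and your verification — identifying the nodes $\lambda_j=-\frac{n}{2}+2N-j$, checking that each summand is the corresponding Lagrange basis element (with the first summand contributing $(-1)^{N-1}$ at $\lambda_N$), and invoking uniqueness of degree $\le N-1$ interpolation at $N$ distinct nodes, applied coefficientwise to the operator-valued polynomial — is exactly that reasoning spelled out.
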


By resolving the recursions, we find
\begin{equation}\label{rep-form}
P_{2N}^{res}(\lambda) = \sum_{|I|=N} a_I(\lambda) P_{2I}
\end{equation}
with polynomial coefficients $a_I(\lambda)$ of degree $N\!-\!1$.

\begin{conj}\label{mystic}
\begin{equation}\label{main}
\M_{2N} = (d/d \lambda)^{N-1}|_0 (P_{2N}^{res}(\lambda)), \; N \ge
2.
\end{equation}
\end{conj}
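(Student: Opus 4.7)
The plan is to prove the identity by induction on $N$, exploiting the Lagrange-interpolation structure of $P_{2N}^{res}(\lambda)$ encoded in Definition \ref{def-e}. Since $P_{2N}^{res}(\lambda)$ has degree $N-1$ in $\lambda$ (immediate from \eqref{rec-def} by induction), the operator $(d/d\lambda)^{N-1}|_0\, P_{2N}^{res}(\lambda)$ equals $(N-1)!$ times its leading coefficient. Setting $\mu_j = -\f+2N-j$ and using $\prod_{k=1,\,k\ne j}^N (k-j) = (-1)^{j-1}(j-1)!(N-j)!$, extracting $[\lambda^{N-1}]$ directly from \eqref{rec-def} gives
\begin{equation*}
(N-1)!\,[\lambda^{N-1}] P_{2N}^{res}(\lambda) = P_{2N} - \sum_{j=1}^{N-1} \binom{N-1}{j-1} P_{2j}\, P_{2N-2j}^{res}(\mu_j).
\end{equation*}
On the other hand, splitting $\M_{2N} = \sum_{|I|=N} m_I P_{2I}$ by the first entry of $I$ yields
\begin{equation*}
\M_{2N} = P_{2N} + \sum_{j=1}^{N-1} P_{2j} \Bigl(\sum_{|J|=N-j} m_{(j,J)} P_{2J}\Bigr).
\end{equation*}
Hence Conjecture \ref{mystic} reduces to the operator identity
\begin{equation*}
\sum_{|J|=M} m_{(j,J)}\, P_{2J} = -\binom{j+M-1}{j-1}\, P_{2M}^{res}\bigl(-\f+j+2M\bigr)
\end{equation*}
for every $j \ge 1$ and $M = N-j \ge 1$. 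Since $\mu_j = -\f+j+2M$ lies strictly above the interpolation nodes $\{-\f+M,\dots,-\f+2M-1\}$ of $P_{2M}^{res}$, this is a genuine extrapolation statement.

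The reduced identity is then proved by strong induction on $M$. The base $M=1$ is immediate from $P_2^{res} = P_2$ and $m_{(j,1)} = -j$. For the inductive step, I would apply \eqref{rec-def} to expand $P_{2M}^{res}\bigl(-\f+j+2M\bigr)$. At this shifted argument the Lagrange weights simplify: the coefficient of $P_{2M}$ becomes $\prod_{k=1}^{M-1}(j+k)/k = \binom{j+M-1}{j}$, while the coefficient of $P_{2i} P_{2M-2i}^{res}(\mu'_i)$, for $1\le i\le M-1$, becomes $-(j+M)!/\bigl(j!\,(j+i)\,(i-1)!\,(M-i)!\bigr)$ with $\mu'_i = -\f+2M-i$. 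The crucial observation is that $\mu'_i$ itself has the form $-\f+i+2L$ with $L=M-i$, so the induction hypothesis applies and rewrites $P_{2L}^{res}(\mu'_i)$ in terms of the coefficients $m_{(i,K)}$.

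Comparing the coefficient of $P_{2i} P_{2K}$ on both sides of the reduced identity, for each composition $K$ with $|K|=M-i$, the claim collapses to the purely combinatorial identity
\begin{equation*}
m_{(j,i,K)} = -\frac{(j+M)!\,(j+M-1)!}{j!\,(j-1)!\,M!\,(M-1)!\,(j+i)}\, m_{(i,K)},
\end{equation*}
together with the boundary identity $m_{(j,M)} = -\binom{j+M-1}{j}\binom{j+M-1}{j-1}$ for the $P_{2M}$-coefficient, which is precisely the two-part case \eqref{m-double}. The displayed ratio is verified directly from the product formula \eqref{m-form}, since $m_{(j,i,K)}/m_{(i,K)}$ depends only on $j$, $i$ and $M = i+|K|$. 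The main obstacle will be the bookkeeping of the Lagrange weights together with recognising the correct form of the induction hypothesis, namely the values of $P_{2M}^{res}$ at the off-node points $-\f+j+2M$ for $j \ge 1$; once this extrapolation statement is in place, the remaining verification is routine factorial manipulation.
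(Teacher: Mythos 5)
There is no proof in the paper to compare against: the statement is left as Conjecture \ref{mystic}, supported only by the remark that it ``can be confirmed by computer calculations for not too large $N$.'' Your proposal is therefore an actual proof attempt of something the paper does not prove, and as far as I can check it is sound. The extraction of the leading coefficient from \eqref{rec-def} is correct (the weights $1/(N-1)!$ and $-1/\bigl((j-1)!(N-j)!\bigr)$, hence the binomials $\binom{N-1}{j-1}$), the split of $\M_{2N}$ by the first entry of $I$ is correct since $m_{(N)}=1$, and the reduction to the extrapolation identity $\sum_{|J|=M} m_{(j,J)}P_{2J} = -\binom{j+M-1}{j-1}P_{2M}^{res}(-\f+j+2M)$ is exactly what is needed. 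In the inductive step your evaluated Lagrange weights $\binom{j+M-1}{j}$ and $-(j+M)!/\bigl(j!\,(j+i)\,(i-1)!\,(M-i)!\bigr)$ are right, the point $\mu_i'=-\f+2M-i=-\f+i+2(M-i)$ indeed has the form required by the induction hypothesis, the boundary coefficient matches \eqref{m-double} via $\binom{j+M-1}{M}=\binom{j+M-1}{j-1}$, and the ratio $m_{(j,i,K)}/m_{(i,K)} = -(j+M)!\,(j+M-1)!/\bigl(j!\,(j-1)!\,M!\,(M-1)!\,(j+i)\bigr)$ follows directly from \eqref{m-form} (the sign coming from the extra factor $-(-1)$ in the change $r\mapsto r+1$), with the correct prefactor after dividing by $\binom{M-1}{i-1}$. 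Two small points of hygiene: you only need that $P_{2N}^{res}(\lambda)$ has degree \emph{at most} $N-1$, not exactly $N-1$; and since GJMS-operators for a general metric need not be formally independent, your ``comparison of coefficients'' should be phrased as proving the identity of the universal coefficients $a_I(\lambda)$ of \eqref{rep-form} against $m_I$ (i.e.\ in the free algebra on the symbols $P_{2k}$), which is the stronger statement, is what the computer checks in the paper verify for small $N$, and immediately implies the operator identity. With that phrasing made explicit, your argument upgrades Conjecture \ref{mystic} to a theorem, and your identity for the partial sums $\sum_J m_{(j,J)}P_{2J}$ is a metric-free analogue of the sphere computation in Lemma \ref{ps}.
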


The relation \eqref{main} can be confirmed by computer calculations
for not too large $N$.

Thus, under Conjecture \ref{mystic},
\begin{equation*}
P_{2N}^{res}(\lambda) = \CC_{2N}^{N-1}
\frac{(\lambda\!+\!\f\!-\!N)^{N-1}}{(N\!-\!1)!} + \CC_{2N}^{N-2}
\frac{(\lambda\!+\!\f\!-\!N)^{N-2}}{(N\!-\!2)!} + \cdots +
\CC_{2N}^0
\end{equation*}
with
$$
\CC_{2N}^{N-1} = \M_{2N}.
$$
The coefficients are given by universal, i.e., dimension
independent, linear combinations of compositions of GJMS-operators.
In particular, $\CC_{2N}^0 = (-1)^{N-1} P_{2N}$ and the critical
polynomial $P_{n}^{res}(\lambda)$ has the form
\begin{equation}\label{crit-p-pol}
P^{res}_{n}(\lambda) = \M_{n} \frac{\lambda^{\f-1}}{(\f\!-\!1)!} +
\cdots + (-1)^{\f-1} P_{n}.
\end{equation}
However, only the leading coefficient and the constant term of
$P_{2N}^{res}(\lambda)$ are self-adjoint.

Thus, under Conjecture \ref{mystic}, Conjecture \ref{formula}
describes the leading coefficient $\CC^{N-1}_{2N}$ of the polynomial
$P_{2N}^{res}(\lambda)$ as a certain differential operator of {\em
second} order. More generally, we expect that the coefficients
$\CC_{2N}^j$ are differential operators of respective orders
$2N\!-\!2j$. Identifying these operators yields additional recursive
formulas.

The operator $\P_{2N}$ describes the majority of contributions to
$P_{2N}$. It is accompanied by a scalar curvature quantity which
plays a similar role in recursive formulas for the $Q$-curvature
$Q_{2N}$ (see Section \ref{Q-rec}). We recall that
$$
\P_{2N} = - \sum_{|I|=N, \, I \ne (N)} m_I P_{2I}.
$$

\begin{defn}\label{Q-leading} For $N \ge 2$, we set
\begin{equation}\label{main-Q}
(-1)^N \Q_{2N} = - \sum_{a+|J|=N, \, a \ne N} m_{(J,a)} (-1)^a
P_{2J}(Q_{2a}).
\end{equation}
\end{defn}

The first few of these curvature quantities read as follows.

\begin{ex}\label{qres-4} $\Q_4 = -P_2(Q_2)$.
\end{ex}

\begin{ex}\label{qres-6} $\Q_6 = -2 P_2(Q_4) + 2P_4(Q_2) - 3 P_2^2(Q_2)$.
\end{ex}

\begin{ex}\label{qres-8}
\begin{equation*}
\Q_8 = -3 P_2(Q_6) - 3 P_6(Q_2) + 9 P_4(Q_4) + 8 P_2 P_4(Q_2)  - 12
P_2^2 (Q_4) + 12 P_4 P_2 (Q_2) - 18 P_2^3(Q_2).
\end{equation*}
\end{ex}

The conjectural appearance of $\P_{2N}$ in the leading coefficient
of $P_{2N}^{res}(\lambda)$ has an analog for $\Q_{2N}$: it is
conjectured to appear in the leading coefficient $\L_{2N}$ of the
polynomial $Q_{2N}^{res}(\lambda)$.

In the subcritical case $2N < n$, the polynomial
$Q_{2N}^{res}(\lambda)$ is a polynomial of degree $N$ which is
recursively determined by the $N$ relations
\begin{equation}\label{character-A}
Q_{2N}^{res}\left(-\f\!+\!2N\!-\!j\right) = (-1)^j P_{2j}
Q_{2N-2j}^{res}\left(-\f\!+\!2N\!-\!j\right)
\end{equation}
for $j=1,\dots,N-1$ and
\begin{equation}\label{character-B}
Q_{2N}^{res}\left(-\f\!+\!N\right) = -\left(\f\!-\!N\right) Q_{2N},
\end{equation}
together with either
\begin{equation}\label{character-C}
Q_{2N}^{res}(0) = 0
\end{equation}
or a formula which relates $\dot{Q}_{2N}^{res}(-\f\!+\!N)$ and
$Q_{2N}$. The critical $Q$-polynomial $Q_n^{res}(\lambda)$ is
recursively determined by the relations
\begin{equation*}
Q_n^{res}\left(\f\!-\!j\right) = (-1)^j P_{2j}
Q_{n-2j}^{res}\left(\f\!-\!j\right)
\end{equation*}
for $j=1,\dots,\f-1$,
\begin{equation*}
Q_n^{res}(0) = 0,
\end{equation*}
and the identity
$$
\dot{Q}_n^{res}(0) = Q_n.
$$
In full generality, these characterizations are conjectural. For
background and full details on residue families we refer to
\cite{J-book}.

In these terms, we conjecture that
\begin{multline}\label{Taylor-q-res}
Q_{2N}^{res}(\lambda) = \L_{2N} \left(\lambda\!+\!\f\!-\!N\right)^N
+ \cdots \\[-3mm] = (-1)^N \left[\Q_{2N} - Q_{2N}\right]
\frac{(\lambda\!+\!\f\!-\!N)^N}{(N\!-\!1)!} + \cdots.
\end{multline}
Similarly, the critical polynomial $Q_n^{res}(\lambda)$ is
conjectured to have the form
\begin{equation}\label{crit-q-pol}
Q_n^{res}(\lambda) = (-1)^\f \left[\Q_n - Q_n\right]
\frac{\lambda^\f}{(\f\!-\!1)!} + \cdots + Q_n \lambda.
\end{equation}
The fact that the critical $Q$-curvature $Q_n$ appears as the linear
term is a consequence of the holographic formula \cite{gj}.

We describe the role of Conjecture \ref{mystic} and of the related
conjectural relation \eqref{Taylor-q-res} between $Q_{2N} - \Q_{2N}$
and $\L_{2N}$. This relation is the source of a conjectural
recursive description of $Q$-curvatures in terms of lower order
$Q$-curvatures and the volume of Poincar\'e-Einstein metrics. This
will be discussed in Section \ref{Q-rec}. By conformal variation,
the resulting formulas for $Q$-curvatures imply formulas for the
corresponding GJMS-operators which naturally contain the primary
parts $\P_{2N}$.

We illustrate the idea by considering the special case of $Q_6$ in
the critical dimension $n=6.$ $Q_6^{res}(\lambda)$ is a polynomial
of degree $3$. Its characterizing properties imply that it has the
form
$$
Q_6^{res} (\lambda) = \frac{\lambda^3}{2!} (Q_6 + 2P_2(Q_4) -
2P_4(Q_2) + 3P_2^2(Q_2)) + \cdots,
$$
i.e.,
$$
Q_6^{res} (\lambda) = \frac{\lambda^3}{2!} (Q_6 - \Q_6) + \cdots
$$
(see Example \ref{qres-6}). On the other hand, an evaluation of the
definition of $Q_6^{res}(\lambda)$ as the constant term of
$D_6^{res}(\lambda)$ (see the discussion in \cite{J-book}, Section
6.11) shows that the quantity
$$
\Lambda_6 = Q_6 - \Q_6
$$
coincides with
$$
-6 (Q_4 + P_2(Q_2)) \cdot Q_2 - 2!3!2^5 v_6,
$$
where $v_6$ is defined by \eqref{hol-coeff}. The resulting identity
is a recursive formula for $Q_6$ in terms of $P_4$, $P_2$, $Q_4$,
$Q_2$ {\em and} $v_6$. Now recall that conformal variation of $Q_6$
yields the non-constant part $P_6^0$ of $P_6$. By the above
representation of $Q_6$, the resulting formula for $P_6$ reads
$$
P_6 = (2P_2 P_4 + 2P_4 P_2 - 3 P_2^3) + \cdots = \P_6 + \cdots.
$$
In fact, Theorem \ref{M6} shows that $\P_6$ covers {\em all} but a
certain second-order term.

The latter result suggests to generate similar recursive formulas
for $P_{2N}$ by conformal variation of the leading coefficient of
the polynomial $Q_{2N}^{res}(\lambda)$. Through Definition
\ref{Q-leading}, Conjecture \ref{mystic} connects the recursively
defined leading coefficient of $Q_{2N}^{res}(\lambda)$ with the
definition of $\M_{2N}$ in Definition \ref{coefficient}. Theorem
\ref{c-cv} shows that the conformal variation of $\P_{2N}$ is only a
second-order operator.

%%%%%%%%%%%%%%%%%%%%%%%%%%%%%%%%%%%%%%%%%%%%%%%%%%%%%%%%%%%%%%%%%%%%%%
\section{Universal recursive formulas for $Q$-curvatures}\label{Q-rec}

In the present section, we discuss universal recursive formulas for
$Q$-curvatures. They describe $Q_{2N}$ as the sum of its {\em
primary part} $\Q_{2N}$ and its {\em secondary part}. The primary
part is determined by lower order $Q$-curvatures and lower order
GJMS-operators. We discuss two equivalent descriptions of the
secondary parts, and confirm the general picture for round spheres
and pseudo-spheres. A different type of recursive formulas for
$Q$-curvatures was discussed in \cite{FJ} and \cite{J-book}.

It is natural to compare the formula in Conjecture \ref{Q-quadrat}
with the holographic formula of \cite{gj} which relates the critical
$Q$-curvature $Q_n$ of a Riemannian manifold $(M,g)$ of even
dimension $n$ to the holographic coefficients $v_2, \dots, v_n$. We
start by recalling this identity. $(M,g)$ gives rise to a
Poincar\'e-Einstein metric
$$
g_+ = r^{-2}(dr^2 + g_r), \; g_0=g
$$
on the space $(0, \varepsilon) \times M$ (for sufficiently small
$\varepsilon$). The coefficients in the formal Taylor series
\begin{equation}\label{hol-coeff}
v(r) = \frac{vol(g_r)}{vol(g)} = 1+ v_2 r^2 + v_4 r^4 + \cdots + v_n
r^n + \cdots
\end{equation}
are functionals of the metric $g$. These are the renormalized volume
coefficients of \cite{G-vol}, \cite{G-ext} (called holographic
coefficients in \cite{J-book}). For locally conformally flat
metrics, the functionals $v_{2j}$ are given by the formula
\begin{equation}\label{holo-coeff}
v_{2j} = (-1)^j \frac{1}{2^j} \tr (\wedge^j \Rho).
\end{equation}
The functionals \cite{V}
$$
\sigma_j = \tr (\wedge^j \Rho)
$$
give rise to the so-called $\sigma_j$-Yamabe problem which,
in recent years, has been studied intensively. However, in
dimensions $n \ge 6$, these studies are restricted to the locally
conformally flat case \cite{bg-var}. It was suggested in \cite{CF}
that for general metrics the functionals $v_{2j}$ should be regarded
as natural substitutes.

Now let $u$ be an eigenfunction of the Laplacian of $g_+$, i.e.,
$-\Delta_{g_+} u = \lambda(n-\lambda)u$. Its formal asymptotics
$$
u \sim \sum_{j \ge 0} r^{\lambda+2j} \T_{2j}(\lambda)(f), \; r \to 0
$$
defines a sequence of rational families of differential operators
$\T_{2j}(\lambda)$ on $C^\infty(M)$. These should not be confused
with the operators $\T_j$ in \eqref{second}. Let
$\T_{2j}^*(\lambda)$ denote the formal-adjoint operator with respect
to the metric $g$. Then \cite{gj}
\begin{equation}\label{hol}
(-1)^\f \left(\left(\f\right)!\left(\f\!-\!1\right)! \,
2^{n-1}\right)^{-1} Q_n = \frac{1}{n} \left(\sum_{j=1}^{\f-1} (n-2j)
\T_{2j}^*(0)(v_{n-2j}) \right) + v_n.
\end{equation}
This formulation uses the conventions of \cite{J-book}. For a
discussion of an extension of \eqref{hol} to subcritical
$Q$-curvatures we refer to \cite{J-book}, Section 6.9.

As described in Section \ref{polynomial}, the recursive formulas for
GJMS-operators in Conjecture \ref{formula} are suggested by
recursive formulas for $Q$-curvatures which arise from its relation
to the $Q$-polynomials. For $Q_4$ and $Q_6$, these recursive
formulas read
\begin{equation}\label{rec-q-4}
(Q_4 - \Q_4) + Q_2^2 = 2! 2^3 v_4
\end{equation}
and
\begin{equation}\label{rec-q-6}
(Q_6 - \Q_6) + 6(Q_4-\Q_4)Q_2 = -2! 3! 2^5 v_6
\end{equation}
for
$$
\Q_4 = - P_2(Q_2) \quad \mbox{and} \quad \Q_6 = -2P_2(Q_4) + 2
P_4(Q_2) - 3 P_2^2(Q_2)
$$
(see Example \ref{qres-4} and Example \ref{qres-6}). Next, in
\cite{J-book}, Section 6.13 we derived the formula
\begin{equation}\label{rec-q-8}
(Q_8 - \Q_8) + 12 (Q_6-\Q_6)Q_2 + 18 (Q_4-\Q_4)^2 = 3!4!2^7 v_8
\end{equation}
with $\Q_8$ as in Example \ref{qres-8} (for $n=8$ and under some
technical assumption which probably can be removed).

In order to formulate an extension of \eqref{rec-q-4} --
\eqref{rec-q-8}, it will be convenient to use the notation
\begin{equation}\label{defect}
\Lambda_{2j} \st Q_{2j} - \Q_{2j}.
\end{equation}

\begin{conj}[{\bf Universal recursive formulas for $Q$-curvatures}]
\label{Q-quadrat} For even $n$ and $2N \le n$,
\begin{equation}\label{Q-gen-rec}
2 \Lambda_{2N} + \sum_{j=1}^{N-1} \frac{j(N\!-\!j)}{N}
\binom{N}{j}^2 \Lambda_{2N-2j} \Lambda_{2j} = (-1)^N N!(N\!-\!1)!
2^{2N} v_{2N}.
\end{equation}
For odd $n$, the same relations hold true for all $N$.
\end{conj}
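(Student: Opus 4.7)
The plan is to prove the equivalent generating-function formulation (Conjecture \ref{Q-G}), namely $\G(r^2/4)=\sqrt{v(r)}$, and then to recover Conjecture \ref{Q-quadrat} by coefficient comparison. The first step is thus to verify that these two statements are genuinely equivalent as formal identities. Writing $s=r^2/4$, one has $v(r)=1+\sum_{N\ge1}v_{2N}4^N s^N$, so squaring
\begin{equation*}
\G(s)=1+\sum_{N\ge1}(-1)^N\Lambda_{2N}\frac{s^N}{N!(N-1)!}
\end{equation*}
and collecting terms in $s^N$ yields
\begin{equation*}
2(-1)^N\frac{\Lambda_{2N}}{N!(N-1)!}+(-1)^N\sum_{j=1}^{N-1}\frac{\Lambda_{2j}\Lambda_{2N-2j}}{j!(j-1)!(N-j)!(N-j-1)!}=v_{2N}4^N.
\end{equation*}
The routine binomial identity
\begin{equation*}
\frac{N!(N-1)!}{j!(j-1)!(N-j)!(N-j-1)!}=\frac{j(N-j)}{N}\binom{N}{j}^2
\end{equation*}
then converts this into \eqref{Q-gen-rec} after multiplying through by $(-1)^N N!(N-1)!$. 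So the squaring of $\G$ is bookkeeping-equivalent to the quadratic relation.

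The substantive step is therefore to prove $\G(r^2/4)=\sqrt{v(r)}$. My plan is to interpret $\G$ through the residue polynomials $Q_{2N}^{res}(\lambda)$ of Section \ref{polynomial}: writing
\begin{equation*}
\G(r)=-\sum_{N\ge0}\L_{2N}\frac{r^N}{N!},
\end{equation*}
the target identity becomes a statement about how the sequence of leading coefficients $\L_{2N}=(-1)^N(\Q_{2N}-Q_{2N})/(N-1)!=(-1)^{N+1}\Lambda_{2N}/(N-1)!$ assembles into the square root of the holographic volume expansion. I would approach this along two fronts. First, use the recursive characterization of $Q_{2N}^{res}(\lambda)$ by the relations \eqref{character-A}--\eqref{character-C} (together with the normalization $\dot{Q}_n^{res}(0)=Q_n$ in the critical case) to extract $\L_{2N}$ as an explicit polynomial in lower-order GJMS-operators, $Q$-curvatures and holographic data. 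Second, compare this combinatorial expression with the coefficients of $\sqrt{v(r)}$ obtained by formally expanding the square root of $v(r)=1+\sum v_{2j}r^{2j}$; the Fa\`a di Bruno type sums that arise should match the primary-part decomposition of $\Q_{2N}$ coming from Definition \ref{Q-leading}.

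The analytic input that closes the argument is the holographic formula \eqref{hol} of \cite{gj}, which already realizes the critical $Q_n$ as an explicit expression in the $v_{2j}$ and the scattering operators $\T_{2j}^*(0)$. The plan is to extend this identity from the top coefficient to \emph{all} the leading coefficients $\L_{2N}$ of subcritical $Q_{2N}^{res}(\lambda)$ by exploiting the recursive nature of the residue families $D_{2N}^{res}(\lambda)$ under restriction; since each $D_{2N}^{res}(\lambda)$ arises as a residue of the scattering operator on the Poincar\'e--Einstein space $(X,g_+)$ with boundary $(M,g)$, the coefficient $\L_{2N}$ is controlled by the Taylor expansion of $v(r)$ at order $2N$, which is precisely what $\sqrt{v(r)}$ encodes. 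Final verification in the explicit test cases $\S^n$ and $\S^{(q,p)}$ then proceeds by inserting the product formulas \eqref{product}, \eqref{op-even}, \eqref{op-odd} together with \eqref{Q-S-form}, \eqref{Q-PS-form} and checking the quadratic identity directly.

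The hard part will be Step two: proving the identification $\L_{2N}=(-1)^{N+1}\Lambda_{2N}/(N-1)!$ in full generality. This requires controlling, simultaneously, the combinatorics of compositions governing $\M_{2N}$ and $\Q_{2N}$ (which is already intricate, cf. Lemma \ref{LPP} and the proof of Theorem \ref{c-cv}) and the analytic structure of residue families in dimensions and orders where no explicit closed form is available. A plausible route to overcome this is to first establish Conjecture \ref{mystic} for operators, which would furnish the analogous leading-coefficient statement for $P_{2N}^{res}(\lambda)$; applying it to the constant function and invoking \eqref{q-curv} would then transfer the operator-level identity to the $Q$-curvature level and reduce the problem to the holographic identity together with the recursive characterization of the residue polynomials.
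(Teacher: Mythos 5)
The statement you are trying to prove is stated in the paper as a \emph{conjecture}, and your proposal does not close it either: what you offer is a reduction, not a proof. Your first step (squaring $\G$ and matching coefficients to get the equivalence of \eqref{Q-gen-rec} with \eqref{effective}) is correct and is exactly the remark the paper makes after Conjecture \ref{Q-G}. But the substantive step you then lean on --- extracting $\L_{2N}$ from the characterization \eqref{character-A}--\eqref{character-C} of $Q_{2N}^{res}(\lambda)$ and identifying $\L_{2N}=(-1)^{N+1}\Lambda_{2N}/(N-1)!$, i.e.\ \eqref{Taylor-q-res}/\eqref{LL}, possibly after "first establishing Conjecture \ref{mystic}" --- replaces one open conjecture by others that the paper explicitly flags as conjectural in full generality (both the recursive characterization of the residue polynomials and Conjecture \ref{mystic} itself). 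Likewise, your claim that the holographic formula \eqref{hol} "closes the argument" does not work as stated: \eqref{hol} and the recursive formula \eqref{Q-gen-rec} organize the same difference $Q_n-(-1)^{n/2}(n/2)!(n/2-1)!2^{n-1}v_n$ in fundamentally different ways, and the paper illustrates (compare \eqref{ex-rec-6} with \eqref{hol-6}) that even the single term $\Delta^2\J$ is captured by different mechanisms on the two sides, so there is no direct coefficient-matching transfer from \eqref{hol} to the leading coefficients of the subcritical residue polynomials. Your outline essentially reproduces the strategy the author himself sketches in Section \ref{co} (prove \eqref{LL} and \eqref{mystery}), with the hard identification left exactly where the paper leaves it.

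What the paper actually establishes for this statement are special cases: the relations for $N\le 4$ and general metrics (via \eqref{rec-q-4}--\eqref{rec-q-8}, taken from \cite{J-book}), and complete proofs on the round spheres (Theorem \ref{Q-red-sphere}, resting on Lemma \ref{Q-sphere}) and on the pseudo-spheres (via \cite{jk}). Be aware that the sphere case, which you describe as a "final verification ... by inserting the product formulas ... and checking the quadratic identity directly," is in fact the main proven content and is not routine: the check requires the summation formula of Lemma \ref{ps}, the product expansion of Lemma \ref{develop} (a Pfaff--Saalsch\"utz evaluation), a partial-fraction identity and a further binomial identity, with a separate treatment of the critical case $2N=n$. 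If you want your write-up to constitute a proof of anything, you should either carry out these explicit-case computations in detail, or supply genuine proofs of \eqref{LL} and of Conjecture \ref{mystic}; as it stands, the central gap --- the identification of the secondary parts $\Lambda_{2N}$ with the leading coefficients $\L_{2N}$ for general metrics --- remains untouched.
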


It should be emphasized that the recursive relations in Conjecture
\ref{Q-quadrat} are much simpler than those discussed in \cite{FJ}.

\begin{ex} For $N=3$ and $N=4$, \eqref{Q-gen-rec} reads
$$
2 \Lambda_6 + 6 \Lambda_2 \Lambda_4 + 6 \Lambda_4 \Lambda_2 = -2!3!
2^6 v_6
$$
and
$$
2 \Lambda_8 + 12 \Lambda_2 \Lambda_6 + 36 \Lambda_4^2 + 12 \Lambda_6
\Lambda_2 = 3!4! 2^8 v_8.
$$
These identities are equivalent to \eqref{rec-q-6} and
\eqref{rec-q-8}, respectively.
\end{ex}

It is a common feature of \eqref{hol} and \eqref{Q-gen-rec} that
they describe the differences
$$
Q_{2N} - (-1)^N N!(N\!-\!1)! 2^{2N-1} v_{2N}.
$$
But both descriptions do this in fundamentally different ways. We
compare \eqref{Q-gen-rec} for the critical $Q$-curvature with the
holographic formula \eqref{hol}. Both formulas provide expressions
for the difference
$$
Q_n - (-1)^\f  \left(\f\right)!\left(\f\!-\!1\right)! 2^{n-1} v_n.
$$
The following observation points to the subtleties of the relation.
For the critical $Q_6$ we compare \eqref{rec-q-6}, i.e.,
\begin{equation}\label{ex-rec-6}
Q_6 = -2P_2(Q_4) + (2P_4 - 3P_2^2)(Q_2) - 6(Q_4 - \Q_4)Q_2 - 2!3!2^5
v_6
\end{equation}
with the holographic formula
\begin{equation}\label{hol-6}
Q_6 = -2^6 \left(4 \T_2^*(0)(v_4) + 2 \T_4^*(0)(v_2)\right) -
2!3!2^5 v_6.
\end{equation}
The operators $\T_2(0)$ and $\T_4(0)$ have the form
$$
\T_2(0) = 2^{-3}(\Delta + \cdots), \quad \T_4(0) = 2^{-6} (\Delta^2
+ \cdots).
$$
The term $\Delta^2 \J = \Delta^2 Q_2$ contributes to $Q_6$ with the
coefficient $1$. In \eqref{hol-6}, this terms is captured by
$\T_4^*(0)(v_2)$ using $v_2 = -\frac{1}{2} \J$. On the other hand,
both terms $P_2(Q_4)$ and $(2P_4 - 3P_2^2)(Q_2)$ in \eqref{ex-rec-6}
are required to cover this contribution.

Note that \eqref{Q-gen-rec} is compatible with the result
\cite{bran-2} that, in $Q_{2N}$, the contribution with the largest
number of derivatives is $(-1)^{N-1} \Delta^{N-1}(\J)$. In fact, the
primary part $\Q_{2N}$ of $Q_{2N}$ (see Definition \ref{Q-leading})
contains the contribution
\begin{align*}
(-1)^N \sum_{a+|J|=N, \, a \ne N} m_{(J,a)} \Delta^{|J|}
\Delta^{a-1}(\J) & = (-1)^N \left( \sum_{|I|=N, \; I \ne (N)} m_I
\right) \Delta^{N-1}(\J) \\
& = (-1)^{N-1} \Delta^{N-1}(\J) \qquad \mbox{(by \eqref{sum-0})}.
\end{align*}

A repeated application of \eqref{Q-gen-rec} yields a formula for
$Q_{2N}$ as a sum of the primary part $\Q_{2N}$ and a linear
combination of terms of the form
$$
v_{2I} = v_{2I_1} \cdots v_{2I_r}
$$
for $I=(I_1,\dots,I_r)$ with $|I| = N$. Before we describe the
structure of these formulas, we display the first few special cases.

\begin{ex}\label{v-formula} The identity $Q_2 = -2 v_2$ is trivial.
Moreover, we have
$$
Q_4 = \Q_4 + 4 (4 v_4 - v_2^2)
$$
and
$$
Q_6 = \Q_6 - 48 (8 v_6 - 4 v_4 v_2 + v_2^3).
$$
\end{ex}

Now let
\begin{equation}\label{G-def}
\G(r) \st 1 + \sum_{N \ge 1} (-1)^N \Lambda_{2N}
\frac{r^N}{N!(N\!-\!1)!}.
\end{equation}

Ignoring the problems caused by obstructions, the following
conjecture reformulates Conjecture \ref{Q-quadrat} in terms of
generating functions.

\begin{conj}[{\bf Duality}]\label{Q-G}
\begin{equation}\label{effective}
\G\left(\frac{r^2}{4}\right) = \sqrt{v(r)},
\end{equation}
where $v$ is defined by \eqref{hol-coeff}.
\end{conj}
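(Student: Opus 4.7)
The plan is to prove \eqref{effective} by rewriting it as a coefficient-wise statement and identifying that statement with Conjecture \ref{Q-quadrat}. Setting $s = r^2/4$, the right-hand side of \eqref{effective} becomes $v(2\sqrt{s}) = 1 + \sum_{N \ge 1} 2^{2N} v_{2N}\, s^N$, so the claim amounts to the formal identity
\[
\G(s)^{2} = 1 + \sum_{N \ge 1} 2^{2N} v_{2N}\, s^{N}.
\]
Write $\G(s) = 1 + \sum_{N\ge 1} a_N s^N$ with $a_N = (-1)^N \Lambda_{2N}/(N!(N-1)!)$. The coefficient of $s^N$ in $\G(s)^{2}$ is $2a_N + \sum_{j=1}^{N-1} a_j a_{N-j}$, so matching coefficients reduces the conjecture to the single family of identities
\[
\frac{(-1)^N}{N!(N-1)!}\!\left[2\Lambda_{2N} + \sum_{j=1}^{N-1}\! \frac{N!(N-1)!}{j!(j-1)!(N-j)!(N-j-1)!}\, \Lambda_{2j}\Lambda_{2N-2j}\right] = 2^{2N} v_{2N},
\]
one for each admissible $N$.

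The one combinatorial point to observe is the elementary identity
\[
\frac{N!(N-1)!}{j!(j-1)!(N-j)!(N-j-1)!} = \frac{j(N-j)}{N}\binom{N}{j}^{2},
\]
which follows by regrouping factorials (both sides evaluate to $\binom{N}{j}\binom{N-1}{j-1}\cdot j$ after minor rearrangement). Substituting this into the previous display and multiplying through by $(-1)^N N!(N-1)!$ produces precisely
\[
2\Lambda_{2N} + \sum_{j=1}^{N-1} \frac{j(N-j)}{N}\binom{N}{j}^{2} \Lambda_{2N-2j}\Lambda_{2j} = (-1)^N N!(N-1)!\, 2^{2N} v_{2N},
\]
i.e., the content of Conjecture \ref{Q-quadrat}. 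Hence, term by term, Conjecture \ref{Q-G} and Conjecture \ref{Q-quadrat} are equivalent statements, and granting the latter proves the former.

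The only subtlety is the handling of obstructions in even dimension $n$: the holographic expansion $v(r)$ is defined only up to order $r^n$, and $\Lambda_{2N}$ is defined only for $2N \le n$. With this understanding, both $\G(r^2/4)$ and $\sqrt{v(r)}$ are to be read as terminating Taylor polynomials modulo $r^{n+2}$, and the coefficient-matching argument above goes through precisely in the range $1 \le N \le n/2$ in which Conjecture \ref{Q-quadrat} asserts a relation. Thus there is no genuine obstacle: given Conjecture \ref{Q-quadrat}, the proof of \eqref{effective} is a pure generating-function manipulation whose only non-trivial ingredient is the binomial identity displayed above.
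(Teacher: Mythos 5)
Your argument is correct and is essentially the paper's own: the paper establishes Conjecture \ref{Q-G} only as equivalent to Conjecture \ref{Q-quadrat}, by dividing \eqref{Q-gen-rec} by $N!(N-1)!$ and matching Taylor coefficients of $\G(r^2/4)^2$ against $v(r)$, which is precisely your coefficient-wise computation together with the elementary identity $\frac{N!(N-1)!}{j!(j-1)!(N-j)!(N-j-1)!} = \frac{j(N-j)}{N}\binom{N}{j}^2$. Your remark on interpreting both sides as terminating series in even dimensions likewise matches the paper's caveat, so nothing is missing beyond the fact that the statement itself remains conjectural.
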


In fact, \eqref{Q-gen-rec} is equivalent to
$$
2 \frac{\Lambda_{2N}}{N!(N\!-\!1)!} + \sum_{j=1}^{N-1}
\frac{\Lambda_{2N-2j}}{(N\!-\!j)!(N\!-\!1\!-\!j)!}
\frac{\Lambda_{2j}}{j!(j\!-\!1)!} = (-1)^N 2^{2N} v_{2N}.
$$
This yields the equivalence.

The formulation of Conjecture \ref{Q-G} can be taken literally, for
instance, for locally conformally flat metrics. For general metrics
and even $n$, the possibly existing obstructions require to
interpret \eqref{effective} as an identity of terminating Taylor
series.

By definition, the generating function $\G$ lives on $M$ and its
variable $r$ is a formal variable. \eqref{effective} relates it to
the volume form of an associated Poincar\'e-Einstein metric on a
space $X =(0,\varepsilon) \times M$ of one more dimension. In this
connection, the variable $r$ has a geometric meaning. The relation
\eqref{effective} resembles the statements around the
AdS/CFT-duality which, for instance, claim relations between
super-string theory on $AdS_5$ and super-Yang-Mills theory on its
boundary \cite{W}. The common flavor motivates to refer to
\eqref{effective} as a duality. Its validity in the special case of
$X=AdS_5$ with the boundary $M=\S^{3,1}$ is one of the facts which
support the general formulation (see Corollary \ref{Q-c-pseudo} and
the remarks following it).

A formal calculation of the square root in \eqref{effective} yields
a power series in even powers of $r$ with coefficients that are
linear combinations of terms of the form $v_{2I}$. More precisely,
\begin{equation}
\sqrt{v(r)} = 1 + w_2 r^2 + w_4 r^4 + w_6 r^6 + \cdots,
\end{equation}
where
\begin{align*}
2 w_2 & = v_2, \\
2 w_4 & = \frac{1}{4} \left( 4 v_4 - v_2^2 \right), \\
2 w_6 & = \frac{1}{8} \left( 8 v_6 - 4 v_4 v_2 + v_2^3 \right).
\end{align*}

Now comparing coefficients in \eqref{effective}, yields
\begin{equation}\label{Beta}
(-1)^N (Q_{2N} - \Q_{2N}) = 2^{2N} N!(N\!-\!1)! w_{2N}.
\end{equation}
The first three of these identities are given in Example
\ref{v-formula}. The next relation
\begin{equation}\label{next}
Q_8 - \Q_8 = 2^8 4! 3! w_8
\end{equation}
with
$$
2 w_8 = \frac{1}{64}\left( 64 v_8 - 32 v_6 v_2 - 16 v_4^2 + 24 v_2^2
v_4 - 5 v_2^4 \right)
$$
is already implicit in the proof of Theorem 6.13.1 in \cite{J-book}
(without the identification of $w_8$, however).

The relations \eqref{Beta} replace the description \eqref{Q-gen-rec}
of the secondary parts $Q_{2N} - \Q_{2N}$ by a description in terms
of holographic coefficients.

Next, we prove Conjecture \ref{Q-quadrat} and Conjecture \ref{Q-G}
for the round spheres. The proofs rest on the following result.

\begin{lemm}\label{Q-sphere} On $\S^n$, we have
\begin{equation}\label{lambda-form}
\frac{\Lambda_{2N}}{(N\!-\!1)!} = \prod_{j=0}^{N-1}
\left(\f-j\right), \; N \ge 1.
\end{equation}
Hence
$$
\G(r) = \sum_{N \ge 0} (-1)^N \binom{\f}{N} r^N = (1-r)^\f.
$$
\end{lemm}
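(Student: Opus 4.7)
My plan is to prove the explicit product formula for $\Lambda_{2N}$; once this is in hand, the generating-function statement is instant, since
\[
\G(r) = 1 + \sum_{N\ge 1}(-1)^N\frac{\Lambda_{2N}}{N!(N-1)!}\,r^N = \sum_{N\ge 0}(-1)^N\binom{\f}{N}r^N = (1-r)^{\f}
\]
by Newton's binomial theorem.

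The key simplifying feature of $\S^n$ is that, by \eqref{product}, every GJMS-operator is a polynomial in $P_2$, so all $P_{2k}$ commute, and every $Q_{2a}$ is a constant. Hence for any composition $J=(J_1,\dots,J_s)$ one has
\[
P_{2J}(Q_{2a}) = Q_{2a}\cdot P_{2J}(1) = (-1)^{|J|}Q_{2a}\prod_{i=1}^s(\f-J_i)Q_{2J_i}.
\]
Substituting this into Definition \ref{Q-leading} and incorporating the trivial $I=(N)$ contribution $m_{(N)}Q_{2N}=Q_{2N}$, I obtain the scalar identity
\[
\Lambda_{2N} = Q_{2N} - \Q_{2N} = \sum_{|I|=N} m_I\, Q_{2I_r}\prod_{i=1}^{r-1}(\f-I_i)Q_{2I_i}
\]
(empty product $:=1$). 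This reformulation agrees with Corollary \ref{Q-S} and Examples \ref{qres-4}--\ref{qres-6} for $N\le 3$, and its proof reduces to collapsing the sum on the right in closed form.

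To evaluate that sum, I would fix the last part $a := I_r$ and carry out the inner sum over the preceding composition $J$ of $N-a$ first. Using commutativity of GJMS-operators together with the symmetry $m_I=m_{I^{-1}}$ (Corollary \ref{self}), the inner sum equals $(-1)^{N-a}Q_{2a}\sum_J m_{(a,J)}P_{2J}(1)$, for which Lemma \ref{ps} supplies an explicit closed form as a linear combination of the scalars $P_{2k}(1) = (-1)^k(\f-k)Q_{2k}$. The remaining sum over $a$ is then of hypergeometric type, structurally parallel to the one appearing in Lemma \ref{finale}, and should collapse by the same Chu--Vandermonde and Pfaff--Saalsch\"utz manipulations to the claimed product $(N-1)!\prod_{j=0}^{N-1}(\f-j)$.

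I expect the final collapse to be the main obstacle. The extra factor $Q_{2I_r}$, absent from the setting of Lemma \ref{ps}, breaks the symmetry exploited in Section \ref{spheres}, so the simplification requires a two-stage hypergeometric summation (one for the inner sum over $J$, one for the outer sum over $a$) rather than the single-stage one used for Theorem \ref{sphere}; organizing these two stages so that the result telescopes cleanly to the product formula is the heart of the argument. As a sanity check, note that if the formula holds, then on $\S^n$ with $\Rho=g/2$ one has $v(r)=(1-r^2/4)^n$, hence $\sqrt{v(r)}=(1-r^2/4)^{\f}=\G(r^2/4)$, so the resulting identity is precisely the special case of Conjecture \ref{Q-G} that one expects, and Vandermonde's convolution reduces both sides of the quadratic recursion of Conjecture \ref{Q-quadrat} to $N!(N-1)!\binom{n}{N}$ --- an independent consistency check on the product formula.
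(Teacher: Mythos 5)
Your reduction is sound and is in fact exactly how the paper begins: the generating-function statement does follow instantly from the product formula, the scalar identity $\Lambda_{2N}=\sum_{|I|=N}m_I\,Q_{2I_r}\prod_{i<r}\left(\f-I_i\right)Q_{2I_i}$ is correct on $\S^n$, and the paper likewise fixes the last entry $a=I_r$, uses $m_I=m_{I^{-1}}$ (Corollary \ref{self}) to pass to $\sum_J m_{(a,J)}P_{2J}$, and evaluates that inner sum with Lemma \ref{ps}. Your final consistency checks (the $v(r)=(1-r^2/4)^n$ comparison and the Vandermonde reduction of Conjecture \ref{Q-quadrat} to $N!(N\!-\!1)!\binom{n}{N}$) are also correct.

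The gap is that the outer summation over $a$ --- which you yourself identify as ``the heart of the argument'' --- is never carried out, and the toolkit you propose for it is not sufficient as stated. It does not collapse ``by the same Chu--Vandermonde and Pfaff--Saalsch\"utz manipulations'' as in Lemma \ref{finale}: because the summand now carries the scalar weights $\left(\f-(N\!-\!a\!-\!k)\right)Q_{2N-2a-2k}Q_{2a}$ rather than operator products, one must first linearize the products of two $Q$'s via the constant-term version of Lemma \ref{develop}, i.e.\ $\left(\f-a\right)\left(\f-b\right)Q_{2a}Q_{2b}=\sum_j\cdots Q_{2(a+b-j)}$, which introduces a division by $\f-a$ (and hence an undefined term in the critical case $2N=n$, $a=\f$, that needs separate interpretation). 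After the index shift and one Chu--Vandermonde step, the remaining sum over $a$ is then handled not by the binomial theorem but by the partial-fraction identity $\sum_{k}(-1)^k\binom{n}{k}\frac{1}{x+k}=\frac{n!}{x(x+1)\cdots(x+n)}$, followed by the explicit sphere formula \eqref{QN-sphere} for $Q_{2M}$ and the further binomial identity $\sum_k(-1)^k\binom{l}{m+k}\binom{s+k}{n}=(-1)^{l+m}\binom{s-m}{n-l}$ to reach $(N\!-\!1)!\prod_{j=0}^{N-1}\left(\f-j\right)$. None of these steps appear in your proposal, so as written it stops precisely where the lemma's actual content begins; to complete it you would need to supply this second-stage summation (or an equivalent closed-form evaluation) explicitly.
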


Now for $\S^n$, we have \cite{G-vol}
$$
g_r = (1 - r^2/4)^2 g \quad \mbox{and} \quad v(r) = (1-r^2/4)^n.
$$
This proves \eqref{effective}. In other words, we have proved

\begin{thm}\label{Q-red-sphere} Conjecture \ref{Q-quadrat} and
Conjecture \ref{Q-G} hold true on round spheres.
\end{thm}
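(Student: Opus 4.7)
The plan is to reduce Theorem \ref{Q-red-sphere} to Lemma \ref{Q-sphere}, which is the combinatorial heart of the matter, and then verify the duality \eqref{effective} by direct comparison with the explicit Poincar\'e-Einstein metric associated to $\S^n$. Conjecture \ref{Q-quadrat} will then follow, since the paper already records the equivalence of Conjecture \ref{Q-quadrat} and Conjecture \ref{Q-G}.

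First, I would establish Lemma \ref{Q-sphere}. The explicit value of $Q_{2N}$ on $\S^n$ is given by Corollary \ref{Q-S}, so it remains to compute $\Q_{2N}$ from Definition \ref{Q-leading}. On the round sphere the subcritical $Q$-curvatures $Q_{2a}$ are constants, and \eqref{q-curv} together with the product formula \eqref{product} shows that for a composition $J=(I_1,\dots,I_r)$ the iterated action on a constant reduces to
\[
P_{2J}(Q_{2a}) \;=\; Q_{2a}\prod_{k=1}^{r}(-1)^{I_k}\!\left(\tfrac{n}{2}-I_k\right)Q_{2I_k}.
\]
Substituting this and the explicit form of $m_I$ from \eqref{m-form} into the defining sum \eqref{main-Q} for $\Q_{2N}$ reduces the claim \eqref{lambda-form} to a purely combinatorial identity for the constants $\prod_{j=0}^{N-1}(\tfrac{n}{2}-j)(\tfrac{n}{2}+j)$ in the variable $\tfrac{n}{2}$. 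I expect this identity can be proved by induction on $N$ along exactly the lines of the proofs of Lemma \ref{LPP} and Lemma \ref{ps}: one writes the left-hand side as an iterated sum over compositions, peels off the first entry $I_1=a$ using \eqref{bino}, applies the induction hypothesis, and evaluates the remaining one-variable sum by Pfaff--Saalsch\"utz or Chu--Vandermonde summation, as Krattenthaler does in Section \ref{spheres}.

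Once Lemma \ref{Q-sphere} is in hand, the generating function identity follows at once: by \eqref{G-def} and \eqref{lambda-form},
\[
\G(r) \;=\; \sum_{N\ge 0}(-1)^N\frac{\Lambda_{2N}}{N!(N-1)!}\,r^N
\;=\; \sum_{N\ge 0}(-1)^N\binom{n/2}{N}r^N \;=\; (1-r)^{n/2}
\]
as a formal power series (the binomial coefficient $\binom{n/2}{N}$ being interpreted through the falling factorial, so that the derivation is valid for all parities of $n$). On the other hand, the standard Poincar\'e-Einstein filling of $\S^n$ is the hyperbolic ball, for which one has the well-known explicit formulas $g_r = (1-r^2/4)^2 g$ and hence $v(r) = (1-r^2/4)^n$; see \cite{G-vol}. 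Taking the positive square root gives $\sqrt{v(r)}=(1-r^2/4)^{n/2}$, which by the computation of $\G$ above is exactly $\G(r^2/4)$. This proves Conjecture \ref{Q-G} on $\S^n$.

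Finally, the paragraph following Conjecture \ref{Q-G} verifies that \eqref{Q-gen-rec} is equivalent, term by term, to the coefficient identities obtained by expanding \eqref{effective}. Hence Conjecture \ref{Q-quadrat} holds on $\S^n$ as well, completing the proof. The main obstacle is thus entirely concentrated in step one: turning the recursion defining $\Q_{2N}$ on $\S^n$ into the closed form \eqref{lambda-form} will require the same kind of careful hypergeometric manipulation as in Section \ref{spheres}, and I would not be surprised if the cleanest proof proceeds by first establishing a refined statement analogous to Lemma \ref{ps} at the level of the residue polynomial $Q_{2N}^{res}(\lambda)$ and then specializing $\lambda=-n/2+N$.
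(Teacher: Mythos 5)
Your overall skeleton is exactly the paper's: reduce everything to Lemma \ref{Q-sphere}, deduce $\G(r)=(1-r)^{n/2}$, compare with $v(r)=(1-r^2/4)^n$ for the hyperbolic filling of $\S^n$, and invoke the already-recorded equivalence of Conjecture \ref{Q-quadrat} and Conjecture \ref{Q-G}. Your reduction of $P_{2J}(Q_{2a})$ on the sphere to a product of constants is also correct, and you rightly observe that \eqref{lambda-form} thereby becomes a polynomial identity in $\tfrac n2$ (which, as a side benefit, would absorb the critical-case subtlety $2N=n$ that the paper has to treat by hand).

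The genuine gap is that this polynomial identity --- the entire content of Lemma \ref{Q-sphere}, i.e.\ \eqref{Q-claim} --- is never proved; you only assert that it should follow ``by induction along exactly the lines of'' Lemmas \ref{LPP} and \ref{ps}. That is optimistic: the paper's proof is not a rerun of that induction but uses Lemma \ref{ps} as a black-box input (after first exploiting $m_I=m_{I^{-1}}$ and self-adjointness to move the distinguished entry $a$ to the front), and then still needs several further nontrivial steps: linearizing the products $Q_{2N-2a-2k}Q_{2a}$ into single $Q$'s via the constant terms of Lemma \ref{develop}, an index shift and a Chu--Vandermonde evaluation as in Lemma \ref{finale}, the partial fraction expansion $\sum_{k}(-1)^k\binom{n}{k}\frac{1}{x+k}=\frac{n!}{x(x+1)\cdots(x+n)}$, the sphere product formula \eqref{QN-sphere}, and finally the binomial identity $\sum_k(-1)^k\binom{l}{m+k}\binom{s+k}{n}=(-1)^{l+m}\binom{s-m}{n-l}$; in the critical case an undefined fraction $(\f-(N-s))/(\f-a)$ at $a=\f$, $s=0$ must also be interpreted correctly. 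None of these steps is supplied or even anticipated in your sketch beyond a generic appeal to hypergeometric summation, so as it stands the proof of the key lemma --- and hence of the theorem --- is missing; what you have is a correct reduction plus the (easy) endgame, with the combinatorial core deferred.
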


We continue with the proof of Lemma \ref{Q-sphere}.

\begin{proof} By Definition \ref{Q-leading} and \eqref{defect}, the assertion
is equivalent to
\begin{equation}\label{Q-claim}
\sum_{a+|J|=N} m_{(J,a)} (-1)^a P_{2J}(Q_{2a}) =
(-1)^N (N\!-\!1)! \prod_{j=0}^{N-1} \left(\f-j\right).
\end{equation}
Corollary \ref{self} shows that, on general manifolds, the left-hand
side of \eqref{Q-claim} equals
\begin{align*}
\sum_{a=1}^N (-1)^a \left( \sum_{|J|=N-a} m_{(J,a)} P_{2J} \right) (Q_{2a})
& = \sum_{a=1}^N (-1)^a \left( \sum_{|J|=N-a} m_{(a,J)} P_{2 J^{-1}} \right) (Q_{2a}) \\
& = \sum_{a=1}^N (-1)^a  \left( \sum_{|J|=N-a} m_{(a,J)} P_{2J} \right)^* (Q_{2a}).
\end{align*}
Now by Lemma \ref{ps}, the latter sum simplifies to
\begin{multline}\label{Q-pf-2}
(-1)^N Q_{2N} + \sum_{a=1}^{N-1} (-1)^a \binom{N\!-\!1}{a\!-\!1} \\
\times \sum_{k=0}^{N-a-1} (-1)^{N-a-k} \binom{N}{k}
\frac{(N\!-\!a)!(N\!-\!a\!-\!1)!}{(N\!-\!a\!-\!k)!(N\!-\!a\!-\!k\!-\!1)!}
P_{2N-2a-2k}(Q_{2a}).
\end{multline}
But since $Q$-curvatures of round spheres are constant,
$$
P_{2N-2a-2k}(Q_{2a}) = (-1)^{N-a-k} \left(\f\!-\!\left(N\!-\!a\!-\!k\right)\right)
Q_{2N-2a-2k} Q_{2a}.
$$
Hence \eqref{Q-pf-2} can be written as
\begin{multline}\label{Q-pf-3}
(-1)^N Q_{2N} + \sum_{a=1}^{N-1} (-1)^a \binom{N\!-\!1}{a\!-\!1} \\
\times \sum_{k=0}^{N-a-1} \binom{N}{k} \frac{(N\!-\!a)!
(N\!-\!a\!-\!1)!}{(N\!-\!a\!-\!k)!(N\!-\!a\!-\!k\!-\!1)!}
\left(\f\!-\!\left(N\!-\!a\!-\!k\right)\right) Q_{2N-2a-2k} Q_{2a}.
\end{multline}
Now we express the products $Q_{2N-2a-2k} Q_{2a}$ as linear
combinations of $Q's$. Taking constant terms in Lemma \ref{develop},
yields
\begin{multline*}
\left(\f\!-\!a\right)\left(\f\!-\!\left(N\!-\!a\!-\!k\right)\right) Q_{2a} Q_{2N-2a-2k} \\
= \sum_{j=0}^a \frac{a!(N\!-\!a\!-\!k)!(N\!-\!k)!}{j!(a\!-\!j)!(N\!-\!a\!-\!k\!-\!j)!(N\!-\!k\!-\!j)!}
\left(\f\!-\!\left(N\!-\!k\!-\!j\right)\right)Q_{2N-2k-2j}.
\end{multline*}
By similar arguments as in the proof of Lemma \ref{finale},
\eqref{Q-pf-3} leads to
\begin{multline*}
(-1)^N Q_{2N} + \sum_{s=0}^{N-1} \sum_{a=1}^{N-1} (-1)^a \frac{(\f\!-\!(N\!-\!s))}{(\f\!-\!a)}
Q_{2N-2s} \binom{N\!-\!1}{a\!-\!1} \frac{N!(N\!-\!a)!}{(N\!-\!a\!-\!s)!(N\!-\!s)!} \\
\times \sum_{k=0}^N \binom{N\!-\!a\!-\!1}{k} \binom{a}{s\!-\!k}.
\end{multline*}
As in the proof of Lemma \ref{finale}, the sum over $k$ equals
$\binom{N-1}{s}$, and we get
\begin{equation}\label{Q-pf-4}
\sum_{s=0}^{N-1} \left(\f\!-\!\left(N\!-\!s\right)\right) Q_{2N-2s}
\frac{N!(N\!-\!1)!^2}{(N\!-\!s)! s! (N\!-\!s\!-\!1)!^2} \sum_{a=1}^N
(-1)^a \frac{1}{(\f-a)} \binom{N\!-\!s\!-\!1}{a\!-\!1}.
\end{equation}
Now the well-known partial fraction expansion
$$
\sum_{k = 0}^n (-1)^k \binom{n}{k} \frac{1}{x+k} =
\frac{n!}{x(x+1)\cdots(x+n)}, \; x \ne 0,-1-,\dots,-n
$$
(see \cite{cm}, p. 188) simplifies \eqref{Q-pf-4} to
\begin{multline}\label{intermed}
N!(N\!-\!1)!^2 \\ \times \sum_{s=0}^{N-1}
\left(\f\!-\!\left(N\!-\!s\right)\right) Q_{2N-2s}
\frac{1}{(-\f\!+\!1) \cdots (-\f\!+\!N\!-\!s)} \frac{1}{(N\!-\!s)!
\, s! \, (N\!-\!s\!-\!1)!},
\end{multline}
i.e.,
\begin{equation}\label{Q-pf-5}
N!(N\!-\!1)!^2 (-1)^N \sum_{s=0}^{N-1} (-1)^s \frac{\f (\f\!+\!1)
\cdots (\f\!+\!(N\!-\!s\!-\!1))}{(N\!-\!s)! \, s! \,
(N\!-\!s\!-\!1)!}.
\end{equation}
In the last step, we used the product formula (see Corollary
\ref{Q-S})
\begin{equation}\label{QN-sphere}
Q_{2N} = \f \prod_{j=1}^{N-1}
\left(\f\!-\!j\right)\left(\f\!+\!j\right).
\end{equation}
Now we write \eqref{Q-pf-5} in the form
\begin{equation*}
- N!(N\!-\!1)!^2 \sum_{s=0}^{N-1} (-1)^s \frac{\f (\f\!+\!1) \cdots
(\f\!+\!s)}{(s\!+\!1)! \, (N\!-\!1\!-\!s)!\, s!} = -N! (N\!-\!1)!
\sum_{s=0}^{N-1} (-1)^s \binom{N-1}{s} \binom{\f+s}{\f-1}
\end{equation*}
and apply the identity (\cite{cm}, (5.24))
$$
\sum_k (-1)^k \binom{l}{m+k} \binom{s+k}{n} = (-1)^{l+m}
\binom{s-m}{n-l}, \quad l,m,n \in \Z, \; l \ge 0
$$
(which can be proved by induction on $l$). Thus, we find
$$
N! (N\!-\!1)! (-1)^N \binom{\f}{N} = (-1)^N (N\!-\!1)!
\prod_{j=0}^{N-1} \left(\f-j\right).
$$
This proves \eqref{Q-claim}. In the above arguments, we have
suppressed the following subtlety in the critical case $2N=n$. The
sum \eqref{Q-pf-4} involves an undefined term for $s=0$ and $a=\f$.
For these parameters, the fraction $(\f-(N-s))/(\f-a)$ is to be
interpreted as $1$. Likewise, in \eqref{intermed} the undefined
fraction for $s=0$ is to be interpreted appropriately. The proof is
complete. \end{proof}

Lemma \ref{Q-sphere} supports the conjectural relation
\eqref{Taylor-q-res}
$$
\L_{2N} = - (-1)^N \frac{\Lambda_{2N}}{(N-1)!}
$$
between the secondary part $Q_{2N}-\Q_{2N}$ and the leading
coefficient $\L_{2N}$ of $Q_{2N}^{res}(\lambda)$. In fact, for the
sphere $\S^n$, the following result describes the full polynomials
$Q_{2N}^{res}(\lambda)$.

\begin{lemm} On $\S^n$,
$$
Q_{2N}^{res}(\lambda) = -(-1)^N \f \prod_{j=1}^{N-1}
\left(\f-j\right) \lambda \prod_{j=1}^{N-1} (\lambda-N-j)
$$
for all $N \ge 1$.
\end{lemm}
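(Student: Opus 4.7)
The plan is to verify the claimed formula by invoking uniqueness of the polynomial $Q_{2N}^{res}(\lambda)$ characterized by \eqref{character-A}, \eqref{character-B}, and \eqref{character-C}. Denote the right-hand side of the lemma by
\begin{equation*}
F_N(\lambda) = -(-1)^N \f \prod_{k=1}^{N-1}(\f-k) \cdot \lambda \cdot \prod_{k=1}^{N-1}(\lambda-N-k).
\end{equation*}
Then $F_N$ is a polynomial of degree $N$ in $\lambda$, and the three conditions prescribe its values at the $N+1$ distinct points $\lambda \in \{0,\, -\f+N,\, -\f+N+1,\, \dots,\, -\f+2N-1\}$. Since these conditions determine $Q_{2N}^{res}$ uniquely, it suffices to check each of them for $F_N$.

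The identity $F_N(0)=0$ is immediate from the factor $\lambda$, confirming \eqref{character-C}. For \eqref{character-B}, substitution of $\lambda = -\f+N$ gives
\begin{equation*}
F_N(-\f+N) = -(-1)^N \f \prod_{k=1}^{N-1}(\f-k) \cdot (-\f+N) \cdot \prod_{k=1}^{N-1}(-\f-k),
\end{equation*}
and pulling the sign $(-1)^{N-1}$ out of the last product turns it into $\prod_{k=1}^{N-1}(\f+k)$. Combined with the product formula $Q_{2N} = \f \prod_{k=1}^{N-1}(\f-k)(\f+k)$ from \eqref{QN-sphere}, the signs collapse to yield exactly $-(\f-N)Q_{2N}$.

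The only substantive step is \eqref{character-A}. Since every $Q$-curvature on $\S^n$ is constant (Corollary \ref{Q-S}), $P_{2j}$ applied to the number $F_{N-j}(-\f+2N-j)$ acts as multiplication by $P_{2j}(1) = (-1)^j(\f-j)Q_{2j}$, so \eqref{character-A} reduces to the scalar identity
\begin{equation*}
F_N(-\f+2N-j) \;=\; (\f-j)\, Q_{2j}\, F_{N-j}(-\f+2N-j), \qquad j=1,\dots,N-1.
\end{equation*}
Both sides can be expanded directly using the formulas for $F_N$, $F_{N-j}$ and \eqref{QN-sphere} for $Q_{2j}$. At $\lambda = -\f+2N-j$, the trailing product in $F_N$ becomes $\prod_{k=1}^{N-1}(-\f+N-j-k)$ and the one in $F_{N-j}$ becomes $\prod_{k=1}^{N-j-1}(-\f+N-k)$; each is a run of consecutive integer shifts of $-\f$ that reindexes into a standard product $\prod (\f-\ell)$ up to an explicit power of $-1$.

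The main obstacle is the combinatorial bookkeeping: after these sign flips one must check that the overlapping products of $(\f-k)$ and $(\f+k)$ on the two sides match, with the isolated factor $\f \prod_{k=1}^{j-1}(\f-k)(\f+k) = Q_{2j}$ on the right being precisely what fills the gap between the two corresponding products on the left. I expect no conceptual difficulty here beyond careful index manipulation, along the same lines as the calculations appearing in Lemma \ref{ps} and Lemma \ref{Q-sphere}. Once \eqref{character-A} is verified at all $N-1$ interpolation points, uniqueness identifies $F_N$ with $Q_{2N}^{res}$ and the lemma follows.
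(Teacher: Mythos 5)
Your overall strategy is exactly the paper's: the paper also proves the lemma by checking that the displayed polynomial satisfies the characterizing interpolation properties \eqref{character-A}, \eqref{character-B}, \eqref{character-C} and omits the bookkeeping. Your reduction of \eqref{character-A} to the scalar identity $F_N(-\f+2N-j) = (\f-j)Q_{2j}F_{N-j}(-\f+2N-j)$ is correct (on $\S^n$ the residue polynomials are constant functions, so $P_{2j}$ acts by $P_{2j}(1)=(-1)^j(\f-j)Q_{2j}$), and the product manipulation you sketch does go through: after the sign flips one finds $F_N(-\f+2N-j)=\f^2\,(-\f+2N-j)\prod_{k=1}^{N-1}(\f-k)\prod_{k=1}^{N-j-1}(\f-k)\prod_{k=1}^{j-1}(\f+k)$, which matches the right-hand side once $(\f-j)\prod_{k=1}^{j-1}(\f-k)\prod_{l=j+1}^{N-1}(\f-l)$ is recombined into $\prod_{k=1}^{N-1}(\f-k)$, with $Q_{2j}=\f\prod_{k=1}^{j-1}(\f-k)(\f+k)$ from \eqref{QN-sphere} filling the gap as you predicted.

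The genuine gap is your uniqueness step. You assert that the conditions prescribe values at $N+1$ \emph{distinct} points $\{0,-\f+N,\dots,-\f+2N-1\}$; this is false precisely in the critical case $2N=n$, where $-\f+N=0$, so you only have $N$ distinct evaluation points and values there do not determine a degree-$N$ polynomial. The lemma is claimed for all $N\ge 1$, so on an even-dimensional sphere this case occurs. The paper handles it explicitly: when $2N=n$ the two conditions in \eqref{character-B}--\eqref{character-C} coincide and must be supplemented by the derivative condition $\dot{Q}_n^{res}(0)=Q_n$, so you must additionally check that $\dot{F}_N(0)=(2N-1)!$, which indeed equals $Q_{2N}$ on $\S^{2N}$ by \eqref{QN-sphere}; without this check your argument does not pin down the critical polynomial. (A related, smaller caveat: for even $n$ with $2N+2\le n\le 4N-2$ the point $0$ coincides with one of the points $-\f+2N-j$, so distinctness again fails and one must fall back on the alternative condition relating $\dot{Q}_{2N}^{res}(-\f+N)$ and $Q_{2N}$ mentioned in the paper; the paper's own proof is silent on this, but your explicit distinctness claim makes the issue visible in your write-up.)
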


\begin{proof} In the non-critical case $2N \ne n$, it suffices to
verify that the given expression satisfies the characterizing
properties
$$
Q_{2N}^{res}\left(-\f\!+\!2N\!-\!j\right) = (-1)^j P_{2j}
Q_{2N-2j}^{res}\left(-\f\!+\!2N\!-\!j\right), \; j=1,\dots,N-1
$$
and
\begin{equation}\label{second-set}
Q_{2N}^{res}\left(-\f\!+\!N\right) = -\left(\f\!-\!N\right) Q_{2N},
\quad Q_{2N}^{res}(0)=0
\end{equation}
(see \eqref{character-A}, \eqref{character-B} and
\eqref{character-C}). In the critical case $2N=n$, the two
conditions in \eqref{second-set} coincide, and have to be
supplemented by $\dot{Q}_n^{res}(0) = Q_n$. The proof is
straightforward and we omit the details.
\end{proof}

Finally, we outline a proof of Conjecture \ref{Q-G} for the
conformally flat pseudo-spheres of Section \ref{ps-sphere}. Full
details are given in \cite{jk}. First, we make the assertion more
explicit. By conformal flatness, the Poincar\'e-Einstein metric is
given by $g_+ = r^{-2} (dr^2 + g_r)$ with $g_r = g - \Rho r^2 +
\Rho^2 r^4/4$ (\cite{FG2}, \cite{J-book}). Now using
\begin{equation}\label{pseudo-Schouten}
\Rho = \frac{1}{2} \begin{pmatrix} g_{\S^q} & 0 \\ 0 & g_{\S^p}
\end{pmatrix} \quad \mbox{and} \quad \Rho^2 = \frac{1}{4} \begin{pmatrix}
g_{\S^q} & 0 \\ 0 & -g_{\S^p}
\end{pmatrix},
\end{equation}
we find
\begin{equation} \label{pseudo-PM}
g_+ = r^{-2}(dr^2 + (1-r^2/4)^2 g_{\S^q} - (1+r^2/4)^2 g_{\S^p}).
\end{equation}
In particular, the volume function $v(r)$ is given by
\begin{equation}\label{pseudo-volume}
v(r) = (1-r^2/4)^q (1+r^2/4)^p.
\end{equation}
It follows that Conjecture \ref{Q-G} is equivalent to
$$
1 + \sum_{N \ge 1} \frac{r^N}{N!(N\!-\!1)!} \left( \sum_{a+|J|=N}
(-1)^a m_{(J,a)} P_{2J} (Q_{2a}) \right) = (1-r)^{\frac{q}{2}}
(1+r)^{\frac{p}{2}}.
$$
But since $Q$-curvatures are constant, this, in turn, is equivalent
to the summation formulas
\begin{equation}\label{pseudo-final}
\sum_{|I|=N} m_I \frac{P_{2I}(1)}{\f-I_{\text{last}}} = N!
(N\!-\!1)! \left(\sum_{M=0}^N (-1)^M \binom{\frac{q}{2}}{M}
\binom{\frac{p}{2}}{N\!-\!M} \right), \; N \ge 1,
\end{equation}
where $I_{\text{last}}$ denotes the last entry of the composition
$I$.

\begin{thm}[\cite{jk}]\label{Q-sum-pseudo} \eqref{pseudo-final} holds true.
\end{thm}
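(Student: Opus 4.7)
My plan is to reduce the identity \eqref{pseudo-final} to a generating function statement and then extend the techniques used for the round sphere case (Lemmas \ref{develop}, \ref{ps}, \ref{finale}, and especially the proof of Lemma \ref{Q-sphere}) to the two-parameter setting of pseudo-spheres.

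First, I would carry out the reformulation already sketched by the author: on $\S^{q,p}$ all $Q$-curvatures are constants (Corollary \ref{Q-PS}), so for any composition $I = (I_1,\dots,I_r)$ of size $N$, the operator $P_{2I}$ applied to $1$ evaluates recursively as
\[
P_{2I}(1) = (-1)^N \prod_{j=1}^r (\tfrac{n}{2}-I_j)\, Q_{2I_j},
\]
and dividing by $\tfrac{n}{2} - I_{\text{last}}$ simply strips the last factor $(\tfrac{n}{2}-I_r)$. Combined with Definition \ref{Q-leading} and \eqref{defect}, this identifies \eqref{pseudo-final} with the coefficient form of Conjecture \ref{Q-G} on $\S^{q,p}$; that is, writing $a = q/2$, $b = p/2$, with $\tfrac{n}{2} = a+b$, we must show
\[
\G(s) = 1 + \sum_{N \ge 1} (-1)^N \Lambda_{2N} \frac{s^N}{N!(N-1)!} = (1-s)^a (1+s)^b,
\]
since \eqref{pseudo-volume} gives $\sqrt{v(r)} = (1-r^2/4)^a(1+r^2/4)^b$.

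Next, I would adapt the three-step mechanism of Section \ref{spheres}. The analog of Lemma \ref{develop} is the essential input: the Molchanov--Branson factorization (Theorem \ref{b-mol}) realizes $P_{2N}(1)$ as the value, at $B^2 = (a-\tfrac12)^2$ and $C^2 = (b-\tfrac12)^2$, of a polynomial in two commuting variables. These scalar values factor as a product of two Pochhammer-like symbols, one in the parameter $a$ and one in $b$, mirroring the identity $p_{2N}(-y(y+1)) = (x_1)_N(x_2)_N$ used in \eqref{poch}. Consequently, the product $P_{2A} P_{2B}(1)$ admits an expansion in the $P_{2k}(1)$ with coefficients that can be evaluated by \emph{two} applications of Pfaff--Saalschütz summation (one per parameter). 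Feeding this into a partial-sum formula analogous to Lemma \ref{ps} will yield the pseudo-sphere version of the identities used in Lemma \ref{finale}.

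The main obstacle is managing the combinatorial complexity created by the second parameter: instead of the single Chu--Vandermonde application that handles the sphere case, one has to disentangle a bilinear structure corresponding to the factorization of the target $(1-s)^a(1+s)^b$ into a Cauchy product. I would attempt to organize the sum over compositions $I$ so that the $a$-dependent and $b$-dependent pieces of each $Q_{2I_j}$ (visible in Corollary \ref{Q-PS}) are tracked separately, and then apply Chu--Vandermonde (or the partial-fraction expansion used at the end of the proof of Lemma \ref{Q-sphere}) once in each variable. A cleaner alternative, which I would pursue in parallel, is a generating-function derivation: introduce the two-variable series $F(r,s) = \sum_{N,k} c_{N,k} r^N s^k$ that encodes $P_{2I}(1)/(\tfrac{n}{2}-I_{\text{last}})$ via Theorem \ref{b-mol}, prove that it factors as a product of two one-variable series each handled by the known round-sphere identity (Lemma \ref{Q-sphere}), and read off \eqref{pseudo-final} as the diagonal coefficient. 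Either route reduces the problem to purely algebraic combinatorics of Pochhammer symbols, the genuinely new content being the bilinear bookkeeping that the sphere proof did not require.
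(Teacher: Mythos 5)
Your proposal follows essentially the same route as the paper: the reduction of \eqref{pseudo-final} to the coefficient identity for $(1-r)^{q/2}(1+r)^{p/2}$ via the constancy of the $Q$-curvatures on $\S^{q,p}$ is exactly the paper's own reformulation, and the paper's proof (deferred to \cite{jk}) is stated to rest precisely on the extension of Lemma \ref{ps} (together with the machinery of Lemma \ref{develop}, Lemma \ref{finale} and the proof of Lemma \ref{Q-sphere}) that you propose to carry out. The two-parameter hypergeometric bookkeeping you identify as the genuinely new work is exactly the content the paper itself leaves to \cite{jk}.
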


The proof of Theorem \ref{Q-sum-pseudo} rests on an extension of
Lemma \ref{ps}.

\begin{corr}\label{Q-c-pseudo} Conjecture \ref{Q-G} holds true on
pseudo-spheres.
\end{corr}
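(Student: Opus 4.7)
The plan is to deduce Corollary~\ref{Q-c-pseudo} directly from Theorem~\ref{Q-sum-pseudo} by formal manipulation of generating functions. The setup of Section~\ref{ps-sphere} has already pinned down the geometric side: \eqref{pseudo-volume} gives $v(r) = (1-r^2/4)^q(1+r^2/4)^p$, so the target identity is the explicit formula
\begin{equation*}
\G(r^2/4) = (1-r^2/4)^{q/2}(1+r^2/4)^{p/2}.
\end{equation*}

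First I would expand the right-hand side by applying the binomial theorem to each of the two factors and forming the Cauchy product; this produces the explicit coefficient
$\sum_{M=0}^N (-1)^M \binom{q/2}{M}\binom{p/2}{N-M}$
of $(r^2/4)^N$ in $\sqrt{v(r)}$. This step is elementary and will serve as the target of the second step.

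Second, I would recognize the coefficient $(-1)^N \Lambda_{2N}/(N!(N-1)!)$ of $(r^2/4)^N$ in $\G(r^2/4)$ as (up to normalization) the left-hand side of \eqref{pseudo-final}. The two observations that do this are: (i) the excluded term $a=N$ in Definition~\ref{Q-leading} has $m_{(N)}=1$ and contributes precisely $(-1)^N Q_{2N}$, so that
\begin{equation*}
(-1)^N \Lambda_{2N} = \sum_{a+|J|=N} m_{(J,a)}(-1)^a P_{2J}(Q_{2a})
\end{equation*}
with the empty-$J$ case now included; and (ii) on $\S^{(q,p)}$ every $Q_{2a}$ is a constant (Corollary~\ref{Q-PS}), hence $P_{2J}(Q_{2a}) = Q_{2a}\cdot P_{2J}(1)$, and combining this with $P_{2a}(1) = (-1)^a(\f-a)Q_{2a}$ rewrites the summand as $m_{(J,a)} P_{2(J,a)}(1)/(\f-a)$. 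Setting $I=(J,a)$, so that $I_{\text{last}}=a$ and $|I|=N$, gives precisely the sum appearing on the left of \eqref{pseudo-final}.

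Third, I would invoke Theorem~\ref{Q-sum-pseudo} to equate this sum with $N!(N-1)!$ times the binomial sum from the first step; substituting into the definition \eqref{G-def} cancels the combinatorial weights $N!(N-1)!$, and matching the two series coefficient by coefficient yields the desired identity. Thus Corollary~\ref{Q-c-pseudo} becomes pure bookkeeping once Theorem~\ref{Q-sum-pseudo} is granted. The genuine obstacle is therefore the combinatorial identity \eqref{pseudo-final} itself, which is the content of \cite{jk} and, as the author indicates, requires an extension of the closed-form partial-sum evaluation in Lemma~\ref{ps} from the round-sphere to the pseudo-spherical setting; everything else in the present argument is formal series manipulation.
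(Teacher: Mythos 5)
Your proposal is correct and follows essentially the same route as the paper: it reduces Conjecture \ref{Q-G} on $\S^{(q,p)}$, via the explicit volume function \eqref{pseudo-volume} and the constancy of the $Q$-curvatures (so that $(-1)^a P_{2J}(Q_{2a})=P_{2(J,a)}(1)/(\f-a)$), to the summation formulas \eqref{pseudo-final}, which are then supplied by Theorem \ref{Q-sum-pseudo} from \cite{jk}. The remaining generating-function bookkeeping you describe is exactly the paper's argument.
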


Note that
$$
g_+ = r^{-2}(dr^2 + (1-r^2/4)^2 g_{\S^{n-1}} - (1+r^2/4)^2 g_{\S^1})
$$
is nothing else than the metric on the anti-de Sitter space
$AdS_{n+1}$ of dimension $n+1$. In fact, $AdS_{n+1}$ is defined as
the hyper-surface
$$
C= \left\{ x_1^2+\cdots+x_{n-1}^2-y_1^2-y_2^2=-1\right\} \subset
\r^{n+2}
$$
with the metric induced by $g_0 = dx_1^2 + \cdots + dx_{n-1}^2 -
dy_1^2 - dy_2^2$. The map
$$
\kappa: (0,1) \times \S^{n-1} \times \S^1 \ni (r,x,y) \mapsto \left(
\frac{1\!-\!r^2}{2r} x,\frac{1\!+\!r^2}{2r} y\right) \in C
$$
pulls back $g_0$ to
$$
\frac{1}{r^2}\left(dr^2 + \left(1-r^2\right)^2 \frac{1}{4}
g_{\S^{n-1}} - \left(1+r^2\right)^2 \frac{1}{4} g_{\S^1} \right),
$$
and the substitution $r \mapsto r/2$ yields $g_+$.

Thus, a special case of Corollary \ref{Q-c-pseudo} is a duality
which relates the volume function $v$ of anti-de Sitter space
$AdS_{n+1}$ to the generating function $\G$ on its boundary
$\S^{n-1,1}$.

%%%%%%%%%%%%%%%%%%%%%%%%%%%%%%%%%%%%%%%%%%%%%%%%%%%%%%%%%%%%%%%%%%%%%%%%%%
\section{A related family of examples}\label{mix}

In the present section, we confirm the general picture for a family
of Riemannian metrics with terminating Poincar\'e-Einstein metrics
discussed in \cite{GL}. The results basically follow from the
corresponding results for pseudo-spheres $\S^{q,p}$.

We consider product manifolds $M^n = \S^q \times \h^p$, $n=q+p$ with
product metrics $g_{\S^q} + g_{\h^p}$ given by the respective
constant curvature metrics of curvature $\pm 1$ on the factors. The
following result describes the associated Poincar\'e-Einstein
metrics.

\begin{thm}[\cite{GL}, Theorem 4.1]\label{GL-case} The metric
\begin{equation}\label{mixed-PM}
g_+ = r^{-2}\left(dr^2 + \left(1-r^2/4\right)^2 g_{\S^q} +
\left(1+r^2/4\right)^2 g_{\h^p}\right)
\end{equation}
on $(0,2) \times M^n$ satisfies
$$
\Ric(g_+) = -n g_+.
$$
\end{thm}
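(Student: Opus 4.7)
My plan is to recognize that the verification of Theorem \ref{GL-case} is essentially parallel to the pseudo-sphere computation \eqref{pseudo-PM}, and to reduce it to the known structure of Poincar\'e-Einstein metrics over locally conformally flat bases. Concretely, the base $M^n = \S^q \times \h^p$ is a product of two constant-curvature factors with sectional curvatures $+1$ and $-1$. Since the sum of the two curvatures is zero, the product is locally conformally flat (this is the classical obstruction for products of space forms). For such metrics, the Fefferman-Graham theory \cite{FG2} tells us that the Poincar\'e-Einstein metric terminates in the explicit form
\begin{equation*}
g_+ = r^{-2}\left(dr^2 + g_r\right), \qquad g_r = g - r^2 \Rho + \tfrac{r^4}{4}\Rho^2,
\end{equation*}
where $\Rho$ is the Schouten tensor of $g$ viewed simultaneously as a symmetric bilinear form and as an endomorphism of $TM$.

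The computational heart of the proof is the explicit determination of $\Rho$ for $g = g_{\S^q} + g_{\h^p}$. Using $\Ric_{\S^q} = (q-1) g_{\S^q}$ and $\Ric_{\h^p} = -(p-1) g_{\h^p}$, the product Ricci tensor is block-diagonal with these blocks, and one finds the scalar curvature
\begin{equation*}
\tau = q(q-1) - p(p-1) = (q-p)(n-1),
\end{equation*}
hence $\J = (q-p)/2$. A direct calculation then gives, in complete analogy with \eqref{pseudo-Schouten},
\begin{equation*}
\Rho = \tfrac12 \begin{pmatrix} g_{\S^q} & 0 \\ 0 & -g_{\h^p}\end{pmatrix}, \qquad \Rho^2 = \tfrac14 \begin{pmatrix} g_{\S^q} & 0 \\ 0 & g_{\h^p}\end{pmatrix},
\end{equation*}
where the square is of the $(1,1)$-endomorphism. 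Substituting these into $g - r^2\Rho + \frac{r^4}{4}\Rho^2$ and collecting terms on each factor yields
\begin{equation*}
g_r = \left(1 - \tfrac{r^2}{2} + \tfrac{r^4}{16}\right) g_{\S^q} + \left(1 + \tfrac{r^2}{2} + \tfrac{r^4}{16}\right) g_{\h^p} = \left(1-\tfrac{r^2}{4}\right)^2 g_{\S^q} + \left(1+\tfrac{r^2}{4}\right)^2 g_{\h^p},
\end{equation*}
which matches \eqref{mixed-PM} precisely. The Einstein condition $\Ric(g_+) = -ng_+$ then follows from the general Fefferman-Graham theorem, which guarantees that the truncated series $g - r^2\Rho + \frac{r^4}{4}\Rho^2$ actually delivers a solution in the conformally flat case.

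The only real obstacle is the conformal flatness step; the Schouten computation is routine, and the rest reduces to invoking \cite{FG2}. As an alternative (self-contained) approach, one could bypass the Fefferman-Graham theorem and verify $\Ric(g_+) = -ng_+$ directly via the doubly-warped product formulas applied to $g_+ = r^{-2}(dr^2 + u(r)^2 g_{\S^q} + w(r)^2 g_{\h^p})$ with $u = 1 - r^2/4$ and $w = 1 + r^2/4$; the ODE constraints on $u, w$ coming from the Einstein equations reduce, thanks to the constant sectional curvatures $\pm 1$ of the factors, to the two identities $u u'' - (u')^2 + 1 = 0$ and $w w'' - (w')^2 - 1 = 0$, both of which are immediate to check. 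In either route, the manifest symmetry between this construction and the pseudo-spherical case \eqref{pseudo-PM} — obtained by flipping the sign of the $g_{\S^p}$ block — makes Theorem \ref{GL-case} a direct analog of the computation already carried out in Section \ref{ps-sphere}.
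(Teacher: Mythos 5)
The paper does not actually prove Theorem \ref{GL-case}; it imports it from \cite{GL}, Theorem 4.1, whose statement is more general (Einstein factors with suitably related scalar curvatures, where the product need not be conformally flat). Your primary route is nevertheless correct and is essentially the computation the paper itself carries out for the pseudo-spheres in Section \ref{Q-rec}: $\S^q\times\h^p$ with curvatures $\pm 1$ is locally conformally flat, so $g_r=g-r^2\Rho+\tfrac{r^4}{4}\Rho^2$ gives an exact Poincar\'e-Einstein metric by \cite{FG2}, and your Schouten computation $\J=(q-p)/2$, $\Rho=\tfrac12(g_{\S^q}\oplus(-g_{\h^p}))$, $\Rho^2=\tfrac14(g_{\S^q}\oplus g_{\h^p})$ reproduces \eqref{mixed-Schouten} and hence \eqref{mixed-PM}. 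What this buys is a two-line derivation in the special case at hand; what \cite{GL} buys is the general sub-product construction, which cannot be reduced to conformal flatness.

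Your ``self-contained'' alternative, however, is wrong as written. For $u=1-r^2/4$ one has $u\,u''-(u')^2+1=-\tfrac12\bigl(1-\tfrac{r^2}{4}\bigr)-\tfrac{r^2}{4}+1=\tfrac12\bigl(1-\tfrac{r^2}{4}\bigr)\neq 0$, and likewise $w\,w''-(w')^2-1=-\tfrac12\bigl(1+\tfrac{r^2}{4}\bigr)\neq 0$, so the claimed identities fail for the very functions you need; moreover the Einstein system for $r^{-2}(dr^2+u^2g_{\S^q}+w^2g_{\h^p})$ does not decouple into two such scalar equations, since the Ricci tensor of a doubly warped product contains a radial equation and mixed terms proportional to $u'w'/(uw)$. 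The direct check can be repaired: substituting $r=2e^{-t}$ turns \eqref{mixed-PM} into $g_+=dt^2+\sinh^2 t\,g_{\S^q}+\cosh^2 t\,g_{\h^p}$, and the three Einstein equations (the $tt$-equation and one per factor) follow immediately from the standard multiply warped product Ricci formulas together with $\cosh^2 t-\sinh^2 t=1$ and $\Ric_{g_{\S^q}}=(q-1)g_{\S^q}$, $\Ric_{g_{\h^p}}=-(p-1)g_{\h^p}$. Since your conclusion rests on the first route, the proof stands, but the ODE reduction should either be corrected along these lines or deleted.
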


The corresponding result in \cite{GL} is actually more general: the
factors $\S^q$ and $\h^p$ can be replaced by Einstein spaces with
suitably related scalar curvatures. The main feature of these
metrics is that their Schouten tensors decompose as the sum of the
respective Schouten tensors of the factors. For $\S^q \times \h^p$,
\begin{equation}\label{mixed-Schouten}
\Rho_{\S^q \times \h^p} =
\begin{pmatrix} \Rho_{\S^q} & 0 \\ 0 & \Rho_{\h^p} \end{pmatrix}
= \frac{1}{2}
\begin{pmatrix} g_{\S^q} & 0 \\ 0 & - g_{\h^p} \end{pmatrix}.
\end{equation}
The metric \eqref{mixed-PM} and the decomposition
\eqref{mixed-Schouten} should be compared with the metric
\eqref{pseudo-PM} and the decomposition \eqref{pseudo-Schouten}.
Note that $\tr(\Rho) = (q-p)/2$ in both cases.

The GJMS-operators $P_{2N}$ on $M^n$ can be obtained by replacing
$\Delta_{\S^p}$ by $-\Delta_{\h^p}$ in Theorem \ref{b-mol}. In fact,
by \cite{GZ} the GJMS-operators appear in the asymptotics of the
eigenfunctions of the Laplacian of the corresponding
Poincar\'e-Einstein metric. Hence the explicit formulas
\eqref{pseudo-PM} and \eqref{mixed-PM} imply the claim. Thus, we
have

\begin{thm}\label{mixed} On $\S^q \times \h^p$, the GJMS-operators
factorize as
\begin{equation}\label{op-e}
P_{4N} = \prod_{j=1}^N \left[(B^2\!-\!C^2)^2\!-\!2(2j\!-\!1)^2
(B^2\!+\!C^2)\!+\!(2j\!-\!1)^4\right]
\end{equation}
and
\begin{equation}\label{op-o}
P_{4N+2} = (-B^2\!+\!C^2) \prod_{j=1}^N
\left[(B^2\!-\!C^2)^2\!-\!2(2j)^2 (B^2\!+\!C^2)\!+\!(2j)^4\right],
\end{equation}
where
\begin{equation}\label{BC-mixed}
B^2 = -\Delta_{\S^q} + \left(\frac{q-1}{2}\right)^2 \quad \mbox{and}
\quad C^2 = \Delta_{\h^p} + \left(\frac{p-1}{2}\right)^2.
\end{equation}
\end{thm}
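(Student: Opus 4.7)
The strategy is to deduce Theorem \ref{mixed} from Theorem \ref{b-mol} by comparing the Poincar\'e-Einstein metrics \eqref{mixed-PM} and \eqref{pseudo-PM} through the scattering-theoretic construction of GJMS-operators of Graham-Zworski \cite{GZ}. Specifically, I invoke the fact that on a Poincar\'e-Einstein space $(X,g_+)$ with boundary $M^n$, the GJMS-operators $P_{2N}$ are obtained as residues at $\lambda=\f+N$ of the scattering operator associated to $-\Delta_{g_+}u = \lambda(n-\lambda)u$, read off the leading coefficients in the asymptotic expansion $u \sim r^{n-\lambda} F + r^\lambda G + \cdots$ as $r \to 0$. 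Crucially, this construction depends only on the formal Taylor expansion of $g_r$ in $r$ and on the Laplacian it induces.

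The first step is to compute $\Delta_{g_+}$ for the warped product \eqref{mixed-PM}. Separating variables $u(r,x,y) = f(r)\phi(x)\psi(y)$ with $\phi, \psi$ eigenfunctions of $\Delta_{\S^q}, \Delta_{\h^p}$ reduces the eigenvalue equation to an ODE in $r$ whose coefficients involve $-\Delta_{\S^q}+(q-1)^2/4$ and $\Delta_{\h^p}+(p-1)^2/4$, i.e.\ exactly the operators $B^2$ and $C^2$ of \eqref{BC-mixed}. The same computation performed on the pseudo-sphere metric \eqref{pseudo-PM} yields the analogous ODE with coefficients $-\Delta_{\S^q}+(q-1)^2/4$ and $-\Delta_{\S^p}+(p-1)^2/4$, which are the $B^2$ and $C^2$ of \eqref{BC-pseudo}. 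The two ODEs are transformed into each other by the substitution $-\Delta_{\S^p} \leftrightarrow \Delta_{\h^p}$, which reflects the change of sign of the second warping factor (the metric on the second factor flips from $-g_{\S^p}$ to $+g_{\h^p}$, and correspondingly the inverse metric contribution to the Laplacian flips sign).

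The second step is to transport the factorization through this substitution. Since Theorem \ref{b-mol} expresses $P_{2N}(\S^{q,p})$ as a polynomial in the two commuting operators $B^2$ and $C^2$, and since the scattering/residue procedure depends on the $r$-ODE only through its dependence on these two spectral parameters, the same polynomial expressions compute $P_{2N}(\S^q\times\h^p)$ with the new $C^2$. This yields precisely \eqref{op-e} and \eqref{op-o}. In particular the shifts $(p-1)^2/4$ in $C^2$ are unchanged because they come from the indicial roots of the $r$-ODE, which depend only on the Taylor structure of the warping factor $(1\pm r^2/4)^2$ in $r$, and this is the same (up to the sign of the $r^2$ coefficient, which is absorbed in the substitution).

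The main obstacle is a bookkeeping one: keeping track of the signs of the $g_{\S^p}$/$g_{\h^p}$ contributions through the inverse metric, the Laplace-Beltrami operator, and the indicial equation, and verifying that the only net effect on the factorization is the stated substitution $\Delta_{\S^p} \mapsto -\Delta_{\h^p}$. Once this is in place, \eqref{op-e} and \eqref{op-o} follow immediately from \eqref{op-even} and \eqref{op-odd}. As a sanity check one can verify the scalar case (applying both sides to the constant function $1$) recovers the constant terms, which via \eqref{q-curv} should give the $Q$-curvatures consistent with those of the pseudo-sphere under the same substitution.
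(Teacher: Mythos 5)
Your proposal is correct and follows essentially the same route as the paper: the paper likewise deduces Theorem \ref{mixed} from Theorem \ref{b-mol} by invoking the Graham--Zworski characterization of GJMS-operators through the asymptotics of eigenfunctions of the Poincar\'e-Einstein Laplacian, so that the explicit metrics \eqref{pseudo-PM} and \eqref{mixed-PM} yield the result via the substitution $\Delta_{\S^p}\mapsto -\Delta_{\h^p}$. Your write-up merely spells out the separation-of-variables and indicial-root bookkeeping that the paper leaves implicit.
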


In particular,
$$
P_2=-B^2+C^2 = \Delta_{\S^q} + \Delta_{\h^p} - \left(\f-1\right) Q_2
$$
with
$$
Q_2 = \tr (\Rho) = \frac{q-p}{2}
$$
and
$$
P_4 = (\Delta_{\S^q} + \Delta_{\h^p})^2 - \left(\f\!-\!1\right)
(q\!-\!p) (\Delta_{\S^q} \!+\! \Delta_{\h^p}) + 2 (\Delta_{\S^q}
\!-\!\Delta_{\h^p}) + \left(\f\!-\!2\right)Q_4
$$
with
\begin{equation*}\label{mixed-q4}
Q_4 = \frac{(p+q)}{2} \frac{(q-p-2)}{2} \frac{(q-p+2)}{2}.
\end{equation*}

The operators $P_{2N}$, acting on functions which are constant on
one of the factors, can be written as products of shifted Laplace
operators. More precisely, we identify the restriction of $P_{2N}$
to functions which are constant on $\h^p$ with an operator
$P_{2N}^+$ on $C^\infty(\S^q)$. Similarly, we identify the
restriction of $P_{2N}$ to functions which are constant on $\S^p$
with an operator $P_{2N}^-$ on $C^\infty(\h^p)$. The following
product formulas generalize the product formula \eqref{product} on
$\S^n$ and its analog \eqref{einstein} (for $\tau = -n(n-1)$) on
$\h^n$.

\begin{corr}\label{mixed-rest}
\begin{equation}\label{p-plus}
P_{2N}^+ = \prod_{j=0}^{N-1} \left(\Delta_{\S^q} +
\left(\frac{p+q}{2}-N+2j\right)\left(\frac{p-q}{2}-N+2j+1\right)\right)
\end{equation} and
\begin{equation}\label{p-minus}
P_{2N}^- = \prod_{j=0}^{N-1} \left(\Delta_{\h^p} +
\left(\frac{p+q}{2}-N+2j\right)\left(\frac{p-q}{2}+N-2j-1\right)\right).
\end{equation}
\end{corr}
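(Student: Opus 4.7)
The plan is to derive both product formulas directly from Theorem \ref{mixed} by evaluating $P_{2N}$ on functions that are constant along one factor of $M^n = \S^q \times \h^p$. I will describe the argument for $P_{2N}^+$; the case $P_{2N}^-$ will follow by a parallel computation with the roles of $\S^q$ and $\h^p$ interchanged.

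First I would rewrite each quartic bracket appearing in \eqref{op-e} and \eqref{op-o} as
\begin{equation*}
(B^2-C^2)^2 - 2m^2(B^2+C^2) + m^4 = (B^2+C^2-m^2)^2 - 4B^2 C^2,
\end{equation*}
which is a polynomial in the commuting operators $B^2$ and $C^2$ alone. On $\h^p$-independent functions $\Delta_{\h^p}$ vanishes, so by \eqref{BC-mixed} $C^2$ acts as the scalar $c^2$ with $c = (p-1)/2$; a short calculation then simplifies the bracket to $[B^2-(c+m)^2][B^2-(c-m)^2]$.

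Next I would assemble the resulting factorizations. For $P_{4N}$, taking $m = 1, 3, \dots, 2N-1$ produces $2N$ factors of the form $B^2 - (c+k)^2$ where $k$ ranges over the odd integers with $|k| \le 2N-1$. For $P_{4N+2}$, the prefactor $-B^2 + C^2 = -(B^2-c^2)$ contributes an additional factor and $m = 2, 4, \dots, 2N$ yields $2N$ more, so the resulting set of shifts $k$ consists of all even integers with $|k| \le 2N$. In both parity cases, the collection of values $\mu_j = c + (2j+1-N)$ for $j = 0, 1, \dots, N-1$ (with $N$ now the subscript index of $P_{2N}$) exhausts the list of shifts that appear. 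Using $B^2 = -\Delta_{\S^q} + b^2$ with $b = (q-1)/2$, each factor can be rewritten as $B^2 - \mu_j^2 = -(\Delta_{\S^q} + \mu_j^2 - b^2)$; computing $\mu_j + b = (p+q)/2 - N + 2j = \alpha_j$ and $\mu_j - b = (p-q)/2 - N + 2j + 1 = \beta_j$ identifies $\mu_j^2 - b^2 = \alpha_j \beta_j$, matching \eqref{p-plus}. The $(-1)^N$ arising from pulling out the $N$ minus signs cancels against the sign from the prefactor $-(B^2-c^2)$ in the $P_{4N+2}$ case, the parities of $N$ working out so that the total sign is trivial in each case.

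The argument for $P_{2N}^-$ is symmetric: substituting $B^2 = b^2$ turns each bracket into $[C^2 - (b+m)^2][C^2 - (b-m)^2]$, and setting $\nu_j = b + (2j+1-N)$ gives $\nu_j + c = \alpha_j$ and $c - \nu_j = \gamma_j$ with $\gamma_j = (p-q)/2 + N - 2j - 1$; since $C^2 = \Delta_{\h^p} + c^2$ contains $+\Delta_{\h^p}$, one has $C^2 - \nu_j^2 = \Delta_{\h^p} + \alpha_j \gamma_j$ directly with no sign flip, matching \eqref{p-minus}. The main obstacle I expect is the combinatorial bookkeeping required to verify that the two parity cases $P_{4N}$ and $P_{4N+2}$ of Theorem \ref{mixed} both collapse onto the single unified parametrization of Corollary \ref{mixed-rest}, and that the signs from pulling $B^2 - \mu_j^2$ into $-(\Delta_{\S^q} + \alpha_j \beta_j)$ balance precisely against the prefactor sign in each case.
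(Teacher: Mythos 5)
Your proposal is correct and follows essentially the same route as the paper: the paper's proof is exactly the observation that pairing the quadratic factors $j=k$ and $j=2N-1-k$ of \eqref{p-plus} (with the middle factor giving $P_2^+$ in the odd case) reproduces the quartic brackets of Theorem \ref{mixed} restricted to functions constant on one factor, which is the same identity $[B^2-(c+m)^2][B^2-(c-m)^2]$ that you derive by specializing $C^2$ to the scalar $c^2$ and factoring. Your sign bookkeeping and the unified parametrization $\mu_j = c + (2j+1-N)$ check out in both parity cases.
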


\begin{proof} For the operators $P_{4N}^+$, we observe that the product
of the factors for $j=k$ and $2N\!-\!1\!-\!k$ in \eqref{p-plus}
coincides with the factor for $j=N-k$ in \eqref{op-e}. In
\eqref{p-plus} for $P_{4N+2}^+$, the factor for $j=N$ is $P_2^+$.
The remaining $2N$ factors group together similarly as before.
Analogous arguments apply for the operators $P_{2N}^-$.
\end{proof}

Corollary \ref{mixed-rest} has some consequences for the kernel of
the critical GJMS-operator $P_n$ on $\S^q \times \h^p$, $q+p=n$. By
the conformal covariance of $P_n$, this space is an invariant of the
conformal class of the metric. On functions which are constant on
$\h^p$, $P_n$ reduces to
$$
\prod_{j=0}^{\f-1} (\Delta_{\S^q} + 2j(2j+1-q)).
$$
The latter formula shows that the $SO(q+1)$-modules
$$
\ker (\Delta_{\S^q} + L(L+q-1)) = \ker (\Delta_{\S^q} + 2j(2j+1-q)),
$$
where
$$
L=q\!-\!1\!-\!2k \quad \mbox{and} \quad j=q\!-\!1\!-\!k \quad
\mbox{with} \quad k=0,\cdots,\lfloor (q\!-\!1)/2 \rfloor,
$$
of spherical harmonics of degree $L$ induce subspaces of $\ker
(P_n)$. This generalizes an observation of M. Eastwood and M. Singer
for $P_4$ on $\S^2 \times \h^2$. Similarly, on functions which are
constant on $\S^q$, $P_n$ reduces to
$$
\prod_{j=0}^{\f-1} (\Delta_{\h^p} + 2j(p-1-2j)).
$$
Hence the non-trivial $SO(1,p)$-modules
$$
\ker (\Delta_{\h^p} + 2j(p-1-2j)), \; j=0,\dots,\lfloor
(p-1)/2\rfloor,
$$
induce subspaces of $\ker (P_n)$.

These observations extend to the critical GJMS-operator $P_n$ on
compact spaces of the form $\S^q \times \Gamma \backslash \h^p$,
where the discrete subgroup $\Gamma \subset SO(1,p)$ operates with a
compact quotient on $\h^p$.

The meaning of a non-trivial kernel of $P_n$ for the $Q$-curvature
prescription problem on compact manifolds was discussed in
\cite{G-kernel}.

The following result should be compared with Corollary \ref{Q-PS}.

\begin{corr}\label{Q-mixed} On $\S^q \times \h^p$,
$$
Q_{2N} = \prod_{j=1}^{N-1} \left( \frac{p+q}{2}\!+\!N\!-\!2j \right)
\prod_{j=0}^{N-1} \left( \frac{q-p}{2}\!-\!N\!+\!1\!+\!2j \right),
\; N \ge 1.
$$
\end{corr}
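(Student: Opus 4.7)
The plan is to reduce the claim directly to Corollary \ref{Q-PS} (the pseudo-sphere computation). The crucial observation is that Theorem \ref{mixed} and Theorem \ref{b-mol} present $P_{2N}$ by \emph{the same} polynomial in two abstract variables $B^2$ and $C^2$; only the geometric realization of $C^2$ differs ($-\Delta_{\S^p}$ versus $\Delta_{\h^p}$). Since the $Q$-curvature is extracted from $P_{2N}(1)$, and both realizations of $C^2$ agree on the constant function $1$, the two $Q$-curvatures must coincide.

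First, I would fix the universal polynomials $F_N \in \Z[X,Y]$ defined by the right-hand sides of \eqref{op-even}--\eqref{op-odd}, so that Theorem \ref{b-mol} reads $P_{2N}(\S^{(q,p)}) = F_N(B^2_\text{ps}, C^2_\text{ps})$ with $C^2_\text{ps} = -\Delta_{\S^p} + ((p-1)/2)^2$, while Theorem \ref{mixed} reads $P_{2N}(\S^q \times \h^p) = F_N(B^2, C^2)$ with $C^2 = \Delta_{\h^p} + ((p-1)/2)^2$. Then I would evaluate both sides on the constant function $1$: since $\Delta_{\S^q}(1) = \Delta_{\S^p}(1) = \Delta_{\h^p}(1) = 0$, the operators $B^2$ and $C^2$ (in either realization) act on $1$ as multiplication by the same scalars $((q-1)/2)^2$ and $((p-1)/2)^2$ respectively. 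Consequently
\[
P_{2N}(\S^q \times \h^p)(1) \;=\; F_N\!\left(\tfrac{(q-1)^2}{4}, \tfrac{(p-1)^2}{4}\right) \;=\; P_{2N}(\S^{(q,p)})(1).
\]

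Next, by the defining relation \eqref{q-curv}, $P_{2N}(1) = (-1)^N(n/2 - N) Q_{2N}$ with the same $n = p+q$ and the same sign convention on both manifolds. Hence $Q_{2N}(\S^q \times \h^p) = Q_{2N}(\S^{(q,p)})$ in every subcritical case, and Corollary \ref{Q-PS} delivers the stated product formula.

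The only mild subtlety is the critical dimension $2N = n$, where the scalar $n/2 - N$ vanishes and one cannot divide. The standard resolution (used throughout Section \ref{m-def} for critical $Q$-curvatures) is that $Q_n$ is obtained from its subcritical relatives by the limit $\dim \to n$; since the claimed identity is polynomial in $p$ and $q$ and has already been established for all noncritical $(p,q)$ with $p+q > 2N$, it persists in the critical case by the universality of Branson's construction. This is the only place where one must be slightly careful, but it is exactly the same universality argument that is invoked for $Q_n$ itself, so no additional work is required.
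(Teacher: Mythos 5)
Your proposal is correct and is essentially the paper's own (implicit) derivation: Corollary \ref{Q-mixed} is read off from Theorem \ref{mixed} exactly as Corollary \ref{Q-PS} is read off from Theorem \ref{b-mol}, since in either realization $B^2$ and $C^2$ act on the constant function $1$ by the same scalars $((q-1)/2)^2$ and $((p-1)/2)^2$, so $P_{2N}(1)$ and hence, via \eqref{q-curv}, $Q_{2N}$ coincide with the pseudo-sphere values. Your dimensional-continuation remark for the critical case $2N=q+p$ is a reasonable (and slightly more careful) supplement to a point the paper passes over silently, and it can be made fully rigorous because for these locally symmetric product metrics all curvature contractions are polynomial in $q$ and $p$.
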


Corollary \ref{Q-mixed} implies that the critical $Q$-curvature
$Q_n$, $n=q+p$, of $\S^q \times \h^p$ equals
$$
Q_n = \prod_{j=1}^{\f-1} (n-2j) \prod_{j=0}^{\f-1} (2j+1-p).
$$
This yields

\begin{corr}\label{vanishing-Q} For odd $q$ and $p$, the critical $Q$-curvature
of $\S^q \times \h^p$ vanishes.
\end{corr}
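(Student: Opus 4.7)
The plan is to read off the vanishing directly from the explicit product formula displayed just before the corollary, namely
$$
Q_n = \prod_{j=1}^{n/2-1} (n-2j) \prod_{j=0}^{n/2-1} (2j+1-p),
$$
which is itself the $N = n/2$ specialization of Corollary \ref{Q-mixed}. Since $q$ and $p$ are both odd, $n = q+p$ is even, so $n/2$ is a positive integer and the product makes sense.

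The key step is to exhibit a vanishing factor in the second product. Setting $2j+1 = p$ gives $j = (p-1)/2$, which is a non-negative integer because $p$ is odd. I next check that this index lies in the range of the product: since $q \ge 1$, we have $p \le n-1$, hence $(p-1)/2 \le (n-2)/2 = n/2 - 1$, so $j = (p-1)/2$ is one of the indices appearing in $\prod_{j=0}^{n/2-1}$. For this $j$ the factor $2j+1-p$ vanishes, and the whole product collapses to zero.

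The only mild point to justify is the range check, and it is genuinely routine (the only boundary worth noting is $q=p=1$, where $n=2$, $n/2-1 = 0$, and the single factor $j=0$ gives $1-p = 0$). No further structural argument is required, and there is no real obstacle: the statement is essentially an observation about the explicit product formula of Corollary \ref{Q-mixed}, not a deep identity.
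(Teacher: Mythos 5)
Your argument is correct and is exactly the paper's own derivation: the text preceding the corollary records the $N=n/2$ specialization of Corollary \ref{Q-mixed} and states that this product formula "yields" the vanishing, which is precisely the observation that the factor $2j+1-p$ vanishes at $j=(p-1)/2$, an admissible index since $q\ge 1$. (The paper only adds, as an independent check, an alternative proof via the holographic formula and a binomial identity, which you were not required to reproduce.)
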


Alternatively, this result follows from the holographic formula
\cite{gj}. In fact, by Remark 6.16.1 in \cite{J-book}, $Q_n$ is a
constant multiple of the coefficient of $r^n$ in
$$
\left(1-\frac{r^2}{2}\right)^q \left(1+\frac{r^2}{2}\right)^p.
$$
Hence it suffices to prove that
$$
\sum_{i=0}^q (-1)^i \binom{q}{i} \binom{n-q}{\f-i} = 0
$$
for odd $q$. But this sum equals
$$
\frac{1}{2} \left( \sum_{i=0}^q (-1)^i \binom{q}{i}
\binom{n-q}{\f-i} + (-1)^q \sum_{i=0}^q (-1)^i \binom{q}{q-i}
\binom{n-q}{\f-(q-i)} \right).
$$
Hence it vanishes for odd $q$.

Corollary \ref{vanishing-Q} also appears in \cite{chen}.

Now direct calculations yield the recursive formulas
$$
P_4 = (P_2^2)^0 + 2 (\Delta_{\S^q} - \Delta_{\h^p})+
\left(\f-2\right) Q_4
$$
and
$$
P_6 = \left(2P_2 P_4 + 2P_4 P_2 - 3 P_2^3\right)^0 + 12 (
\Delta_{\S^q} + \Delta_{\h^p}) - \left(\f-3\right) Q_6
$$
with
\begin{equation*}
Q_6 = \frac{(p+q-2)}{2}\frac{(p+q+2)}{2}
\frac{(q-p-4)}{2}\frac{(q-p)}{2}\frac{(q-p+4)}{2}.
\end{equation*}
These are special cases of Theorem \ref{M4} and Theorem \ref{M6},
respectively.

More generally, the following summation formula is a Riemannian
analog of Theorem \ref{pseudo}.

\begin{thm}\label{sum-mixed} On $\S^q \times \h^p$,
\begin{equation*}
\M_{4N} = (2N)! (2N\!-\!1)! \left( \frac{1}{2}\!-\!B^2\!-\!
C^2\right), \; N \ge 1
\end{equation*}
and
\begin{equation*}
\M_{4N+2} = (2N\!+\!1)! (2N)! \, P_2, \; N \ge 0
\end{equation*}
with $B^2$ and $C^2$ as defined in \eqref{BC-mixed}.
\end{thm}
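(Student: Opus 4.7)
The plan is to deduce Theorem \ref{sum-mixed} from Theorem \ref{pseudo} by a universality argument. Comparing the product formulas \eqref{op-e}, \eqref{op-o} with \eqref{op-even}, \eqref{op-odd}, one sees that in \emph{both} settings the GJMS-operator $P_{2N}$ is given by the \emph{same} two-variable polynomial $\Pi_{2N}(X,Y) \in \r[X,Y]$ evaluated at the pair $(B^2, C^2)$; only the meaning of $C^2$ differs, via the substitution $-\Delta_{\S^p} \leftrightarrow \Delta_{\h^p}$. In either setting, $B^2$ and $C^2$ commute because they act on different tensor factors of the product manifold. Hence any composition $P_{2I}$ with $I=(I_1,\dots,I_r)$ reduces, in the commutative algebra generated by $B^2, C^2$, to the product $\Pi_{2I_1}(B^2,C^2) \cdots \Pi_{2I_r}(B^2,C^2)$.

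Consequently, the linear combination defining $\M_{2N}$ is described in both settings by the single universal polynomial
\[
F_{2N}(X,Y) \st \sum_{|I|=N} m_I \, \Pi_{2I_1}(X,Y) \cdots \Pi_{2I_r}(X,Y) \in \r[X,Y].
\]
Theorem \ref{pseudo} then asserts that, on $\S^{q,p}$,
\[
F_{4N}(B^2,C^2) = (2N)!(2N\!-\!1)!\bigl(1/2 - B^2 - C^2\bigr), \qquad F_{4N+2}(B^2,C^2) = (2N\!+\!1)!(2N)!(-B^2+C^2).
\]
The joint spectrum of $(B^2,C^2)$ on $\S^q \times \S^p$ contains the Zariski-dense subset $\{((k+(q-1)/2)^2, (l+(p-1)/2)^2) : k,l \ge 0\} \subset \r^2$, so these operator identities force the corresponding equalities in $\r[X,Y]$ as polynomial identities.

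Specializing these polynomial identities to the operator pair $(B^2, C^2)$ of \eqref{BC-mixed} on $\S^q \times \h^p$ yields the claimed formula for $\M_{4N}$ at once. For $\M_{4N+2}$ one combines the resulting identity with the equality $-B^2 + C^2 = P_2$ on $\S^q \times \h^p$, which follows by a short direct computation using the scalar curvature formula and $Q_2 = (q-p)/2$ from Corollary \ref{Q-mixed}. The main potential obstacle -- the combinatorial evaluation of the sum defining $F_{2N}$ -- has already been overcome in \cite{jk} in the proof of Theorem \ref{pseudo}; the present argument adds only the structural observation that both settings share the same commutative polynomial realization of the GJMS-operators.
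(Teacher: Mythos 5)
Your proposal is correct and is essentially the argument the paper intends: the paper states Theorem \ref{sum-mixed} as a ``Riemannian analog'' of Theorem \ref{pseudo}, the point being precisely that on both $\S^{q,p}$ and $\S^q \times \h^p$ the GJMS-operators are the same universal polynomials in the commuting pair $(B^2,C^2)$ (with $-\Delta_{\S^p}$ replaced by $\Delta_{\h^p}$), so the summation formulas of \cite{jk} carry over unchanged. Your Zariski-density argument on the joint spectrum is a clean way to make this transfer rigorous directly from the statement of Theorem \ref{pseudo}, without re-running the combinatorics behind it, and the identification $-B^2+C^2=P_2$ on $\S^q\times\h^p$ is exactly as in the paper (it is the $N=0$ case of \eqref{op-o}).
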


In particular, we find
$$
\M_{2N}^0 = - 2^{N-1} N! (N\!-\!1)! \, \delta (\Rho^{N-1} \# d)
$$
using \eqref{mixed-Schouten}. This result is a special case of
Conjecture \ref{formula}. The volume function $v$ of $g_+$ is given
by
$$
v(r) = \left(1-r^2/4\right)^q \left(1+r^2/4\right)^p
$$
and the relation
$$
\G(r^2/4)= \sqrt{v(r)}
$$
follows from the corresponding relation for the pseudo-spheres.

%%%%%%%%%%%%%%%%%%%%%%%%%%%%%%%%%%%%%%%%%%%%%%%%%%%%%%%%%%%%%%%%%%%%%%%%%%
\section{Extension beyond conformally flat metrics}\label{n-flat}

\renewcommand{\thefootnote}{\arabic{footnote}}

In the present section, we formulate an extension of Conjecture
\ref{formula} to general metrics, and discuss that extension for
$N=4$ in the critical dimension $n=8$. In this case, the formulation
involves the first two of Graham's extended obstruction tensors
\cite{G-ext}.

In order to motivate the following formulations, we note that
\eqref{p6-general} for general metrics differs from the formula for
locally conformally flat metrics only by the second-order operator
\begin{equation}\label{add-6}
-\frac{16}{n-4} \delta(\B \# d).
\end{equation}
Moreover, \eqref{q6} and \eqref{v6} show that $P_6$, when viewed as
a rational function in $n$, has a simple pole at $n=4$ with residue
$$
\R_6 = -16( \delta(\B \#d) - (\B,\Rho)).
$$
A direct proof shows that $\R_6$ is conformally covariant, i.e.,
$$
e^{5\varphi} \circ \hat{\R}_6 = \R_6 \circ e^{-\varphi}.
$$
Here $\hat{\R}_6$ denotes $\R_6$ for the metric $\hat{g} =
e^{2\varphi} g$. The same convention will be used in the following
for other tensors. The operator $\R_6$ obstructs the existence of
$P_6$ for metrics with $\C \ne 0$ in dimension $n=4$.

We interpret \eqref{add-6} as
\begin{equation}\label{omega-1}
2! 2^3 \delta (\Omega^{(1)} \#d)
\end{equation}
using the first extended obstruction tensor
\begin{equation}\label{ext-first}
\Omega^{(1)} = \frac{\B}{4-n}.
\end{equation}
Similarly, for general metrics, the formula for $\PP_8$ in
Conjecture \ref{M8-gen} should contain the additional term
\begin{equation}\label{obstruct-8}
3! 2^4 \delta(\Omega^{(2)} \# d),
\end{equation}
where $\Omega^{(2)}$ is the second extended obstruction tensor
\cite{G-ext}.

The first two extended obstruction tensors are defined by
\begin{equation}\label{omega-1-2}
\Omega^{(1)}_{ij} = \tilde{R}_{\infty i j \infty}|_{\rho=0,t=1}
\quad \mbox{and} \quad \Omega^{(2)}_{ij} = \tilde{\nabla}_\infty
(\tilde{R})_{\infty i j \infty}|_{\rho=0,t=1},
\end{equation}
where $\tilde{R}$ denotes the curvature tensor of the
Fefferman-Graham ambient metric. The extended obstruction tensors
$\Omega^{(k)}$ are special conformal curvature tensors (in the sense
of \cite{FG2}). In particular, they vanish if $\C = 0$, and their
conformal variations only depend on first-order derivatives of
$\varphi$.

The tensors $\Omega^{(k)}$ can be regarded as rational functions in
the dimension $n$. The Schouten tensor $\Rho$ and the extended
obstruction tensors play the role of canonical building blocks of
the ambient metric and hence also of derived quantities such as the
holographic coefficients. For the details we refer to \cite{G-ext}.

The following result proves the conformal covariance of a
generalization of $\PP_8$ (Theorem \ref{pp8-flat}) in the critical
dimension.

\begin{thm}\label{P8-non-flat} On manifolds of dimension $n=8$,
the self-adjoint operator
\begin{equation}\label{P8-general}
\PP_8 \st \P_8^0 - 3!2^4 \delta(\left[ \Omega^{(2)} - 4 (\Rho
\Omega^{(1)} + \Omega^{(1)} \Rho) + 12 \Rho^3 \right] \# d)
\end{equation}
is conformally covariant, i.e.,
\begin{equation}
e^{8 \varphi} \PP_8 (e^{2 \varphi} g)  = \PP_8(g)
\end{equation}
for all metrics $g$ and all $\varphi \in C^\infty(M)$.
\end{thm}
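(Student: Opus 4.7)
The plan is to adapt the proof of Theorem \ref{inv}, which established the conformal covariance of $\PP_8$ in the locally conformally flat case (Theorem \ref{M8}), by tracking the additional contributions that arise when the Weyl tensor does not vanish. I would show directly that $(d/dt)|_0(e^{8t\varphi}\PP_8(e^{2t\varphi}g)) = 0$ for every $\varphi$; conformal covariance then follows from the integration argument given immediately after \eqref{pp}. Self-adjointness of $\PP_8$ is automatic from Corollary \ref{self} and from the obvious self-adjointness of $\delta(T\#d)$ for any symmetric bilinear form $T$.

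The first step is to apply Theorem \ref{c-cv} in the critical dimension $n=8$, which gives $(d/dt)|_0(e^{8t\varphi}\P_8(e^{2t\varphi}g)) = 3[\M_2,[\M_6,\varphi]] + 18[\M_4,[\M_4,\varphi]] + 9[\M_6,[\M_2,\varphi]]$. Since each inner commutator with $\varphi$ is a first-order operator, passing to non-constant parts replaces every $\M_{2j}$ by $\M_{2j}^0$. The second step substitutes the general-metric formulas $\M_2^0 = \Delta = -\T_0$, $\M_4^0 = -4\T_1$ (Theorem \ref{M4}), and, rewriting \eqref{p6-general} via $\Omega^{(1)} = \B/(4-n)$, $\M_6^0 = -48\T_2 + 16\,\delta(\Omega^{(1)}\# d)$ (Theorem \ref{M6}). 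This splits the sum of commutators into a purely $\Rho$-polynomial piece and correction terms linear or bilinear in $\Omega^{(1)}$. The $\Rho$-polynomial piece, combined with the leading variation of $\T_3 = \delta(\Rho^3\#d)$ supplied by Theorem \ref{comm-2}, is cancelled by the variation of $c_4\,\delta(\Rho^3\#d)$ via exactly the combinatorial identity used in Theorem \ref{inv}; since $3!2^4\cdot 12 = c_4$, this accounts for the $12\Rho^3$ summand in the bracketed tensor and simultaneously fixes the overall prefactor $3!2^4$.

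The final step, and the main obstacle, is to identify the combination $\Omega^{(2)} - 4(\Rho\Omega^{(1)} + \Omega^{(1)}\Rho)$ as the exact correction absorbing all remaining non-flat variations. Three sources contribute: the $\Omega^{(1)}$-linear commutators $3[\Delta,[\delta(\Omega^{(1)}\#d),\varphi]] + 9[\delta(\Omega^{(1)}\#d),[\Delta,\varphi]]$ coming from the $\M_2^0$--$\M_6^0$ cross terms, whose leading Hessian symbol, after the symmetrisation forced by self-adjointness, is a multiple of $\Rho\Omega^{(1)}+\Omega^{(1)}\Rho$; the Cotton-tensor residual \eqref{non-flat} of Theorem \ref{comm-2} for $N=4$, contributing further mixed $\Rho$--$\Omega^{(1)}$ tensor products through the Cotton tensor $\Y$; and the conformal variation of $\Omega^{(1)}$ itself, which by \cite{G-ext} is first order in $\varphi$ with leading coefficient expressible through $\Omega^{(2)}$ via the ambient-curvature definition \eqref{omega-1-2}. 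The coefficients $1$ in front of $\Omega^{(2)}$ and $-4$ in front of $\Rho\Omega^{(1)}+\Omega^{(1)}\Rho$ are then pinned down by matching leading-symbol Hessian coefficients against the prefactor $16$ appearing in $\M_6^0$. Once this leading-symbol match is established, all lower-order pieces are fixed by self-adjointness and integration by parts, completing the verification that the full conformal variation vanishes.
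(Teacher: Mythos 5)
Your overall skeleton is the paper's: reduce to infinitesimal covariance, apply Theorem \ref{c-cv} with the coefficients $9,18,3$, replace $\M_2^0,\M_4^0,\M_6^0$ by their explicit forms (including the Bach/$\Omega^{(1)}$ term), handle the pure-$\Rho$ part exactly as in Theorem \ref{inv} together with Theorem \ref{comm-2} and its non-flat residual \eqref{non-flat}, and your check $3!2^4\cdot 12=c_4$ correctly accounts for the $12\Rho^3$ summand. Up to that point you agree with the paper. But the decisive step --- showing that the Bach-commutator terms \eqref{AA} plus the Cotton residual \eqref{BB} are exactly $96$ times the main part of the conformal variation of $\delta([\Omega^{(2)}-4(\Rho\Omega^{(1)}+\Omega^{(1)}\Rho)]\#d)$ --- is not correctly set up in your proposal, and this is where the actual content of the proof lies. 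First, you invoke ``the conformal variation of $\Omega^{(1)}$ itself, with leading coefficient expressible through $\Omega^{(2)}$'': this is backwards. The variation of $\Omega^{(1)}$ is governed by the Cotton tensor via the Bach transformation law \eqref{bach-conform}; what the proof actually needs is the variation of $\Omega^{(2)}$, which by Graham's Proposition 2.7 is $-\Y^{(2)}_{ijl}\varphi^l+O(|\nabla\varphi|^2)$, together with the identity expressing the second Cotton tensor $\Y^{(2)}$ through $\nabla(\B)/(4-n)$ and Cotton--Schouten products. Neither $\Y^{(2)}$ nor this identity appears anywhere in your plan, yet it is precisely the mechanism by which the $\Omega^{(2)}$ term produces the $\nabla(\B)\cdot\varphi$ contributions that cancel those in \eqref{8-main}.

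Second, your claim that the coefficients $1$ (for $\Omega^{(2)}$) and $-4$ (for $\Rho\Omega^{(1)}+\Omega^{(1)}\Rho$) are ``pinned down by matching leading-symbol Hessian coefficients'' does not work: the $\Hess(\varphi)\Hess(u)$ terms coming from the $\Omega^{(1)}$-commutators involve $\Omega^{(1)}$ alone (the $\Rho\Omega^{(1)}$ structure only arises from varying $\Rho$ inside the correction term), so this matching fixes only the $\Rho\Omega^{(1)}+\Omega^{(1)}\Rho$ coefficient; the $\Omega^{(2)}$ coefficient is fixed by the $\nabla(\B)$-terms, which are first order in $\varphi$, not part of the Hessian symbol in $\varphi$. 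Moreover, after both of these are fixed there remains a genuinely non-trivial consistency check among the Cotton--Schouten terms --- in the paper this is the identity of \eqref{8-C} with \eqref{8-D}, resting on the numerology $96\,(12,6,-18)=144\,(8,4,-12)$ --- and it is exactly the place where the construction could fail. Your closing assertion that ``all lower-order pieces are fixed by self-adjointness and integration by parts'' glosses over this: self-adjointness only reduces the problem to comparing main parts (coefficients of $\Hess(u)$), and within those main parts the first-order-in-$\varphi$ coefficient tensors must be computed and matched explicitly. So the proposal reproduces the paper's strategy but leaves its essential computation unexecuted and partly misdescribed; as written it has a genuine gap.
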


Although the definition of $\PP_8$ involves the first two extended
obstruction tensors, the following proof does not depend on the
explicit formulas \cite{G-ext} for these tensors in terms of $\Rho$,
$\C$, $\Y$ and $\B$. It is natural to ask whether $\PP_8$ coincides
with $P_8$. As already noted in Section \ref{fourth}, it seems
natural to approach this problem by conformal variation of $Q_8$ on
the basis of the recursive formula \eqref{rec-q-8} and the explicit
expression \eqref{v8} for $v_8$ derived in \cite{G-ext}.

\begin{proof} It suffices to prove the infinitesimal conformal covariance.
By Theorem \ref{c-cv},
\begin{equation*}
(d/dt)|_0 \left(e^{8t\varphi} \P^0_8(e^{2t\varphi}g) \right) = 9
[\M_6,[\M_2,\varphi]]^0 + 18 [\M_4,[\M_4,\varphi]]^0 + 3
[\M_2,[\M_6,\varphi]]^0.
\end{equation*}
Since $\M_2$, $\M_4$ and $\M_6$ are second-order operators,
$$
(d/dt)|_0 \left(e^{8t\varphi} \P^0_8(e^{2t\varphi}g) \right) = 9
[\M^0_6,[\M^0_2,\varphi]]^0 + 18 [\M^0_4,[\M^0_4,\varphi]]^0 + 3
[\M^0_2,[\M^0_6,\varphi]]^0.
$$
The relations
$$
\M_6^0 = - 48 \T_2 - \frac{16}{n\!-\!4} \delta(\B \#d), \quad \M_4^0
= -4 \T_1 \quad \mbox{and} \quad \M_2^0 = \Delta
$$
imply that the above sum coincides with
\begin{multline}\label{reduced}
144 \left( -3 [\T_2,[\Delta,\varphi]]^0 + 2 [\T_1,[\T_1,\varphi]]^0
- [\Delta,[\T_2,\varphi]]^0 \right) \\
-\frac{16}{n\!-\!4} \left( 9 [\delta(\B \# d),[\Delta,\varphi]]^0 +
3 [\Delta,[\delta(\B \# d),\varphi]]^0 \right).
\end{multline}
By the same arguments as in the conformally flat case, it only
remains to prove that the sum of
\begin{equation}\label{BB}
144 \left( 8 \Y_{rij} \Rho_k^r + 4 \Y_{krj} \Rho_i^r + 4 \Y_{kir}
\Rho_j^r \right) \varphi^i  \Hess^{jk}(u)
\end{equation}
(see \eqref{non-flat}) and the main part of
\begin{equation}\label{AA}
-\frac{16}{n\!-\!4} \left( 9 [\delta(\B \# d),[\Delta,\varphi]]^0 +
3 [\Delta,[\delta(\B \# d),\varphi]]^0 \right)
\end{equation}
coincides with
$$
96 \times \mbox{the main part of the conformal variation of
$\delta([\Omega^{(2)} - 4 (\Rho \Omega^{(1)} + \Omega^{(1)} \Rho)]
\# d)$}.
$$
Now \eqref{AA} has the main part
\begin{equation}\label{8-main}
-\frac{16}{n\!-\!4} \left[ ( 18 \nabla_i(\B)_{jk} - 12
\nabla_k(\B)_{ij} ) \varphi^i - 48 \B_{ij} \Hess_k^i(\varphi)
\right] \Hess^{jk}(u).
\end{equation}
By the transformation law of the Bach tensor (see
\eqref{bach-conform}),
$$
e^{2\varphi} \hat{\Omega}^{(1)}_{ij} = \Omega^{(1)}_{ij} - (\Y_{kij}
+ \Y_{kji}) \varphi^k - \C_{kijl} \varphi^k \varphi^l.
$$
Hence the main part of the conformal variation of $\frac{1}{2}
\delta( (\Rho \Omega^{(1)} + \Omega^{(1)} \Rho) \# d)$ is
\begin{equation}
\left[ \Omega^{(1)}_{ir} \Hess_j^r(\varphi) + (\Y_{lir} + \Y_{lri})
\Rho_j^r \varphi^l \right] \Hess^{ij}(u).
\end{equation}
In order to determine the conformal variation of $\Omega^{(2)}$, we
apply some results of \cite{G-ext}. Under conformal changes,
$$
e^{4\varphi} \hat{\Omega}^{(2)}_{ij} = \Omega^{(2)}_{ij} -
\Y^{(2)}_{ijl} \varphi^l + O(|\nabla \varphi|^2)
$$
(Proposition 2.7) with the second Cotton tensor $\Y^{(2)}$
(Definition 2.4).\footnote{We retain the convention of \cite{G-ext}
concerning the higher Cotton tensor: $\Y^{(2)}$ is symmetric in the
first two arguments. On the other hand, $\Y$ is anti-symmetric in
the first two arguments.} Now the relation
$$
\Y^{(2)}_{ijk} = \left( 3 \tilde{\nabla}_k(\tilde{R})_{\infty ij \infty} -
\tilde{\nabla}_j(\tilde{R})_{\infty li \infty} -
\tilde{\nabla}_i(\tilde{R})_{\infty l j \infty} \right) \big|_{\rho=0,t=1}
$$
and the formula
$$
\tilde{\nabla}_l (\tilde{R})_{\infty ij \infty}|_{\rho=0,t=1} =
\frac{\nabla_l(\B)_{ij}}{4-n} - (\Y_{rij} + \Y_{rji}) \Rho_l^r
$$
for the covariant derivatives of $\tilde{R}$ suffice to determine
the main part of the conformal variation of $\delta(\Omega^{(2)} \# d)$.
It is given by the sum of
\begin{multline}\label{8-A}
\frac{1}{4-n} \left( 3 \nabla_l(\B)_{ij} - \nabla_j(\B)_{li} -
\nabla_i(\B)_{lj} \right) \varphi^l \Hess^{ij}(u) \\
= \frac{1}{4-n} \left( 3 \nabla_l(\B)_{ij} - 2 \nabla_j(\B)_{li}
\right) \varphi^l \Hess^{ij}(u)
\end{multline}
and
\begin{equation*}
\left( - 3(\Y_{rij}+ \Y_{rji}) \Rho^r_l +
2 (\Y_{ril} + \Y_{rli}) \Rho_j^r \right) \varphi^l \Hess^{ij}(u).
\end{equation*}
Now $96 \times \mbox{\eqref{8-A}}$ coincides with the terms in
\eqref{8-main} which contain a derivative of $\B$. Next, the
relation
$$
96 \cdot(-8) \Omega_{ir}^{(1)} \Hess_j^r(\varphi) \Hess^{ij}(u) =
\frac{16}{n-4} 48 \B_{ij} \Hess_k^i(\varphi) \Hess^{jk}(u)
$$
verifies the assertion for the contributions which contain two
derivatives of $\varphi$. It only remains to prove that
\begin{multline}\label{8-C}
144 \left(8 \Y_{rij} \Rho_k^r + 4 \Y_{krj} \Rho_i^r
+ 4 \Y_{kir} \Rho_j^r \right) \varphi^i \Hess^{jk}(u) \\
=  144 \left(8 \Y_{rli} \Rho_j^r + 4 \Y_{jri} \Rho_l^r
+ 4 \Y_{jlr} \Rho_i^r \right) \varphi^l \Hess^{ij}(u)
\end{multline}
coincides with
\begin{equation}\label{8-D}
 96 \left[-6 \Y_{rji} \Rho_l^r + 2(\Y_{ril} + \Y_{rli}) \Rho_j^r
- 8 (\Y_{lir} + \Y_{lri}) \Rho^r_j \right] \varphi^l \Hess^{ij}(u).
\end{equation}
We observe that $144 \cdot 4 \Y_{jri} = - 96 \cdot 6 \Y_{rji}$.
Next, for the last two products in \eqref{8-D} we find
\begin{multline*}
\big[ 2 \nabla_r(\Rho)_{il} - 2 \nabla_i(\Rho)_{rl} + 2 \nabla_r(\Rho)_{li}
- 2 \nabla_l(\Rho)_{ri} \\
- 8 \nabla_l(\Rho)_{ir} + 8 \nabla_i(\Rho)_{lr} - 8 \nabla_l(\Rho)_{ri}
+ 8 \nabla_r(\Rho)_{li}\big] \Rho_j^r \\
= \left(12 \nabla_r(\Rho)_{il} + 6 \nabla_i(\Rho)_{rl} - 18
\nabla_l(\Rho)_{ri} \right) \Rho_j^r.
\end{multline*}
On the other hand, the first and third term on the right-hand side
of \eqref{8-C} yield
$$
(8 \nabla_r(\Rho)_{li} + 4 \nabla_i(\Rho)_{lr}
- 12 \nabla_l(\Rho)_{ir}) \Rho_j^r \varphi^l \Hess^{ij}(u)
$$
using the symmetry of $\Hess^{ij}(u)$. Now the obvious relation
$$
96 (12,6,-18) = 144 (8,4,-12)
$$
completes the proof. \end{proof}

Next, we describe a second motivation of the $\Omega^{(2)}$-term in
\eqref{P8-general} in general dimensions. For this we recall that
$P_8^0$ is determined by conformal variation of $Q_8$. The
difference
$$
Q_8 - 3! 4! 2^7 v_8
$$
can be expressed in various ways in terms of lower order
constructions. Here one either applies a generalization of the
holographic formula \eqref{hol} or uses a generalization of the
recursive formula \eqref{rec-q-8} to subcritical cases. Now Graham
\cite{G-ext} has shown that\footnote{Here we correct a misprint in
the last term of formula (2.23) in \cite{G-ext}}
\begin{multline}\label{v8}
2^4 v_8 = \tr (\wedge^4 \Rho) \\ + \frac{1}{3} \tr
(\Rho^2\Omega^{(1)}) - \frac{1}{3} \tr (\Rho) \tr (\Rho
\Omega^{(1)}) - \frac{1}{12} \tr (\Rho \Omega^{(2)}) - \frac{1}{12}
\tr (\Omega^{(1)} \Omega^{(1)}).
\end{multline}
In particular, $Q_8$ contains the contribution
$$
-96 (\Omega^{(2)},\Rho).
$$
Now conformal variation yields
$$
96 (\Omega^{(2)},\Hess(\varphi)),
$$
up to a first order operator. This motivates the $\Omega^{(2)}$-term
in \eqref{P8-general}.

A similar argument motivates the $(\Rho \Omega^{(1)} \!+\!
\Omega^{(1)} \Rho)$-term. In fact, by \eqref{v8}, the quantity
$(\Rho^2,\Omega^{(1)})$ contributes to $Q_8$ with the coefficient
$3! 2^6$. Now conformal variation of $(\Rho^2,\Omega^{(1)})$ yields
$-2(\Rho \Omega^{(1)},\Hess(\varphi)) + \cdots$. This motivates the
contribution
$$
3! 2^6 \delta ((\Rho \Omega^{(1)}\!+\!\Omega^{(1)}\Rho) \# d)
$$
in \eqref{P8-general}.

Now universality claims that, in all dimensions $n \ge 8$, the
analogous operator
\begin{equation*}
\P_8^0 - 3! 2^4 \delta(\left[ \Omega^{(2)} - 4 (\Rho \Omega^{(1)} +
\Omega^{(1)} \Rho) + 12 \Rho^3 \right] \# d) + \left(\f\!-\!4\right)
Q_8
\end{equation*}
is conformally covariant, too (and coincides with $P_8$). This
extends Conjecture \ref{M8-gen}. In particular, regarding the
operator as a rational function in $n$, it has a simple pole at
$n=6$. For the residue we find
$$
-96 \delta (\Res_{n=6}(\Omega^{(2)})\# d) + 96
(\Res_{n=6}(\Omega^{(2)}),\Rho) = -48 (\delta (\OB \# d) -
(\OB,\Rho))
$$
by the residue formula (\cite{G-ext}, Proposition 2.8)
$$
2 \Res_{n=6}(\Omega^{(2)}) = \OB.
$$
Here $\OB$ denotes the Fefferman-Graham obstruction tensor in
dimension six. An explicit formula for $\OB$ can be found, for
instance, in \cite{GH}. As the Bach tensor $\B$ in dimension $4$,
$\OB$ is trace-free and divergence-free. Moreover, its
transformation law $e^{4\varphi} \hat {\OB} = \OB$ in dimension six
generalizes $e^{2\varphi} \hat {\B} = \B$ in dimension $4$. A direct
calculation, using these properties, confirms that the operator
$$
\R_8 = \delta(\OB \#d) - (\OB,\Rho) = -(\OB,\Hess) - (\OB,\Rho)
$$
is conformally covariant (in dimension six), i.e.,
$$
e^{7\varphi} \circ \hat{\R}_8 = \R_8 \circ e^{-\varphi}.
$$
It obstructs the existence of $\PP_8$ for general metrics in
dimension six.

Similarly, in dimension $n=4$, the existence of $\PP_8$ is
obstructed by a conformally covariant self-adjoint differential
operator of order four with main part $(\B,\Hess \Delta)$.

Finally, we show that the above results can be seen as special cases
of the following extension of Conjecture \ref{formula}.

\begin{conj}[\bf Universal recursive formulas for GJMS-operators]
\label{Haupt} Let the integer $N \ge 1$ satisfy $2N \le n$ if $n$ is
even. Then on any Riemannian manifold of dimension $n \ge 3$, the
GJMS-operator $P_{2N}$ is given by the recursive formula
\begin{equation}\label{haupt}
P_{2N} = \P_{2N}^0 - a_N \delta( D_{2N-2} \# d) + (-1)^N
\left(\f\!-\!N\right)  Q_{2N}.
\end{equation}
Here $a_N = (2^{N-1}(N-1)!)^2$, and the natural symmetric bilinear
forms $D_2(g), D_4(g), \dots$ are the coefficients of the Taylor
series
$$
g_r^{-1} = g^{-1} + r^2 D_2(g) + r^4 D_4(g) + \cdots
$$
of the inverse of the symmetric bilinear form $g_r$ so that
$r^{-2}(dr^2 + g_r)$ is the Poincar\'e-Einstein metric associated to
$g$.
\end{conj}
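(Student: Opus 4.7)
The plan is to proceed by induction on $N$, assuming \eqref{haupt} for all smaller orders, and to prove that the putative right-hand side
\[
\widetilde{P}_{2N} := \P_{2N}^0 - a_N\, \delta(D_{2N-2}\#d) + (-1)^N \left(\f - N\right) Q_{2N}
\]
is a conformally covariant operator of bidegree $(\f-N,\f+N)$. By Lemma \ref{LPP} its leading part is $\Delta^N$, and by \eqref{q-curv} its constant term matches that of $P_{2N}$. For $2N < n$ the uniqueness of the GJMS-operator at subcritical order then forces $\widetilde{P}_{2N}=P_{2N}$; in the critical case $2N = n$ any discrepancy between $\widetilde{P}_n$ and $P_n$ is a conformally covariant natural operator of order $< n$ with vanishing constant term, and a separate symbol/invariance argument is needed to rule it out.

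The engine of the proof of infinitesimal conformal covariance is Theorem \ref{c-cv}, which gives
\[
(d/dt)|_0 \bigl(e^{(\f+N)t\varphi} \P_{2N}^0(e^{2t\varphi}g) e^{-(\f-N)t\varphi}\bigr) = \sum_{j=1}^{N-1} \binom{N-1}{j-1}^2 (N-j) [\M_{2j}^0, [\M_{2N-2j}^0, \varphi]]^0.
\]
The inductive hypothesis rewrites each $\M_{2k}^0$ as $-a_k \delta(D_{2k-2}\#d)$. Mimicking the proof of Theorem \ref{inv}, the whole task reduces to establishing a single universal conformal variational identity for $\delta(D_{2N-2}\#d)$ itself whose right-hand side is the analogous weighted sum of commutators $[\delta(D_{2j-2}\#d),[\delta(D_{2N-2j-2}\#d),\varphi]]^0$, with coefficients forced by a combinatorial identity of the same flavor as the one appearing in Theorem \ref{inv}.

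The hard part is precisely this general-metric extension of Theorem \ref{comm-2} for the tensors $D_{2k}$. Each $D_{2k}$ is a universal polynomial in $\Rho$ and in Graham's extended obstruction tensors $\Omega^{(j)}$, whose conformal transformation laws are themselves dictated by the Fefferman-Graham ambient metric \cite{G-ext}. A Levi-Civita-level computation along the lines of Theorem \ref{P8-non-flat} becomes hopeless for large $N$; the natural alternative is to work directly on the ambient space, where the generating function $\sum_k r^{2k} D_{2k} = g_r^{-1}$ is a geometric object and the commutator identity should emerge from Bianchi identities and the commutation properties of the ambient Laplacian, paralleling the original construction of \cite{GJMS}. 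The locally conformally flat identity $a_N D_{2N-2} = c_N \Rho^{N-1}$, coming from $D_{2k} = (k+1) 2^{-k} \Rho^k$ when $\C = 0$, together with the confirmed cases $N \le 4$ in Theorems \ref{M4}, \ref{M6} and \ref{P8-non-flat}, provides both the base of the induction and a combinatorial consistency check for the general ambient argument.
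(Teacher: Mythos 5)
First, be aware that the statement you set out to prove is stated in the paper as a \emph{conjecture}: the paper offers no proof of \eqref{haupt} in general, only the special cases $N\le 3$ for general metrics (Theorems \ref{M4} and \ref{M6}), the conformal covariance (but not the identification with $P_8$) of the candidate operator for $N=4$ in dimension $8$ (Theorem \ref{P8-non-flat}), and the locally conformally flat, spherical and pseudo-spherical cases. Your proposal is therefore not being measured against an existing argument, and as written it is a strategy outline rather than a proof; its two load-bearing steps are both genuine gaps. The first is the one you yourself flag: the general-metric conformal variational identity for $\delta(D_{2N-2}\#d)$ with exactly the weighted commutator structure needed to cancel the output of Theorem \ref{c-cv}. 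Theorem \ref{comm-2} is proved only for locally conformally flat metrics, and already there the deviation \eqref{non-flat} involves the Cotton tensor; in the one general-metric case the paper does handle (Theorem \ref{P8-non-flat}), the cancellation had to be arranged by hand against the explicit transformation laws of $\Omega^{(1)}$ and $\Omega^{(2)}$ from \cite{G-ext}. Gesturing at an ambient-space Bianchi-identity argument does not establish the identity, and this identity is essentially equivalent in difficulty to the conjecture itself.

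The second gap is the claim that "uniqueness of the GJMS-operator at subcritical order forces $\widetilde{P}_{2N}=P_{2N}$." For general metrics there is no such uniqueness: conformal covariance, leading symbol $\Delta^N$ and the constant term do not pin down the operator, since one may add conformally covariant lower-order corrections built from the Weyl tensor. The paper makes this point explicitly: $P_4'=P_4+\alpha|\C|^2$ is conformally covariant for every $\alpha$, it produces in dimension $6$ a conformally covariant cube $P_6'$ distinct from $P_6$, and precisely for this reason the paper leaves open whether even $\PP_8$ coincides with $P_8$ (Section \ref{fourth} and Section \ref{n-flat}). So even if you completed the covariance argument, the identification with the GJMS-operator would not follow; the route the paper suggests for this identification is entirely different, namely conformal variation of a recursive formula for $Q_{2N}$ such as \eqref{rec-q-8}, i.e.\ the $Q$-curvature side of the picture (Conjecture \ref{Q-quadrat}), which your proposal does not engage at all. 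The same objection applies a fortiori to your critical case, where you concede a further unspecified argument is needed.
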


We recall that in the locally conformally flat case, $g_r = (1-r^2/2
\Rho)^2$. Hence
$$
g_r^{-1} = \sum_{r \ge 1} N \Rho^{N-1} (r^2/2)^{N-1}.
$$
It follows that
$$
a_N D_{2N-2} = 2^{N-1} N! (N\!-\!1)! \Rho^{N-1}.
$$
This proves that Conjecture \ref{Haupt} extends Conjecture
\ref{formula}.

In order to see that \eqref{haupt} also extends the formulas
\eqref{p6-general} for $P_6$ and \eqref{P8-general} for $P_8$, we
use the fact that for general metrics the coefficients in the Taylor
series
$$
g_r = \sum_{N \ge 0} \left(-\frac{r^2}{2}\right)^N \frac{1}{N!}
g_{(2N)} \st \sum_{N \ge 0} d_{2N} r^{2N}
$$
are given by
\begin{align*}
\frac{1}{2} g_{(2)} & = \Rho, \\
\frac{1}{2} g_{(4)} & = \Omega^{(1)} + \Rho^2, \\
\frac{1}{2} g_{(6)} & = \Omega^{(2)} + 2 (\Rho \Omega^{(1)} +
\Omega^{(1)} \Rho)
\end{align*}
(see \cite{G-ext}, (2.22)). Now we have
$$
g_r^{-1} = g^{-1} + D_2 r^2 + D_4 r^4 + D_6 r^6 + \cdots
$$
with
$$
D_2 = - d_2, \quad D_4 = -d_4 + d_2^2, \quad D_6 = -d_6 + (d_2 d_4 +
d_4 d_2) - d_2^3.
$$
Hence
\begin{align*}
D_2 & = \frac{1}{2} g_{(2)} = \Rho, \\
D_4 & =  -\frac{1}{8} g_{(4)} + \frac{1}{4} g_{(2)}^2 = \frac{1}{4}
(-\Omega^{(1)} + 3 \Rho^2)
\end{align*}
and
\begin{equation*}
D_6 = \frac{1}{48} g_{(6)} - \frac{1}{16} (g_{(2)} g_{(4)} + g_{(4)}
g_{(2)}) + \frac{1}{8} g_{(2)}^3 = \frac{1}{24} (\Omega^{(2)} - 4
(\Omega^{(1)} \Rho + \Rho \Omega^{(1)}) + 12 \Rho^3).
\end{equation*}
This proves the claim.

It is natural to summarize the assertions of Conjecture \ref{Haupt}
for the non-constant terms in form of the identity
\begin{equation}\label{haupt-EF}
\sum_{N \ge 1} \frac{\M_{2N}^0(g)}{(N\!-\!1)!(N\!-\!1)!}
\left(\frac{r^2}{4}\right)^{N-1} = - \delta (g_r^{-1} \# d).
\end{equation}
The latter relation extends \eqref{gf-cf-m}. Of course, in even
dimensions, \eqref{haupt-EF} is to be understood as an identity of
finite series.

By \cite{G-ext}, the symmetric bilinear forms $D_{2N}$ are given by
{\em universal} formula in terms of extended obstruction tensors and
$\Rho$. Combined with Graham's \cite{G-ext} universal formulas for
the holographic coefficients in terms of extended obstruction
tensors and the recursive formulas for $Q$-curvatures (Conjecture
\ref{Q-quadrat}), Conjecture \ref{Haupt} yields universal formulas
for all GJMS-operators in terms of the building blocks $\Rho,
\Omega^{(1)}, \Omega^{(2)},...$.

%%%%%%%%%%%%%%%%%%%%%%%%%%%%%%%%%%%%%%%%%%%%%%%%%%%%%%%%%%%%%%%%%%%%%%%%%
\section{Further comments and open problems}\label{co}

In the locally conformally flat case, Theorem \ref{comm-2} deduces
the conformal covariance of the critical operator
$$
\PP_n = \P_n^0 - c_\f \T_{\f-1} = \Delta^\f + LOT
$$
on a manifold of even dimension $n$ from the relations
\begin{equation}\label{sub-rel}
\M_{2N}^0 = - c_N \T_{N-1}, \; 2N < n
\end{equation}
for all subcritical GJMS-operators. Theorem \ref{M8} provides an
application to a construction of a conformally covariant fourth
power $\PP_8$ of $\Delta$ in dimension $8$. It rests on the fact
that, in this case, the assumptions in Theorem \ref{comm-2}, i.e.,
\eqref{sub-rel} for $N=2$ and $N=3$, are known to be satisfied by
Theorem \ref{M4} and Theorem \ref{M6}.

The identification of $\PP_8$ with $P_8$, however, remains open. As
already mentioned in Section \ref{fourth}, an approach through
conformal variation of $Q_8$ based on \eqref{rec-q-8} seems
feasible. In this connection, the universality of \eqref{rec-q-8}
would play a crucial role.

In an analogous proof of the conformal covariance of
$$
\PP_{10} = \P_{10}^0 - c_5 \T_4
$$
in dimension $n=10$, the only missing piece is a substitute of the
relation \eqref{sub-rel} for $N=4$ in dimension $n=10$. The
principle of universality predicts that this relation actually holds
true in all higher dimensions.

A direct identification of the computer-derived formula \cite{GP}
for $Q_8$ (in general dimension) with the universal recursive
formula \eqref{rec-q-8} is a challenging task. We illustrate the
issue by relating the respective coefficients of $(\Delta^2
(\Rho),\Rho)$ and $\J^4$ in both formulas. In the extension of
\eqref{rec-q-8} to general dimensions, the contribution $(\Delta^2
(\Rho),\Rho)$ appears through
\begin{equation}\label{contri}
-3 P_2(Q_6), \quad (9P_4 - 12P_2^2)(Q_4) \quad \mbox{and} \quad
3!4!2^7 v_8.
\end{equation}
Now the contribution
$$
-2P_2(Q_4) + \frac{16}{n-4}(\B,\Rho)
$$
in $Q_6$ (see \eqref{q6}, \eqref{v6}) yields
$$
-24 \frac{n\!-\!2}{n\!-\!4} (\Delta^2\Rho,\Rho).
$$
The second term in \eqref{contri} gives $12 (\Delta^2 (\Rho),\Rho)$.
Finally, by \eqref{v8} and
$$
(n\!-\!4)(n\!-\!6) \Omega^{(2)} = \Delta^2 (\Rho) + \cdots
$$
(see \cite{G-ext}), the last term in \eqref{contri} gives
$$
- \frac{96}{(n\!-\!4)(n\!-\!6)} (\Delta^2 (\Rho),\Rho).
$$
Adding these results, reproduces the coefficient
$$
-12 \frac{n\!-\!2}{n\!-\!6}
$$
in \cite{GP}. Next, we apply \eqref{rec-q-8} to determine the
coefficient of $\J^4$. A calculation using
$$
Q_2 = \J, \quad Q_4 = \f \J^2 + \cdots \; \mbox{(by \eqref{q4-gen})}
$$
and
$$
Q_6 = \frac{(n\!-\!2)(n\!+\!2)}{4} \J^3 + \cdots \;\mbox{(by
\eqref{q6}, \eqref{v6})}
$$
yields
$$
Q_8 = \frac{(n\!-\!4)n(n\!+\!4)}{8} \J^4 + \cdots.
$$
Note that in this calculation all terms in \eqref{rec-q-8}
contribute in a non-trivial way. These results for $Q_6$ and $Q_8$
fit with \cite{GP}. More generally, the recursive formula predicts
the contribution
$$
\prod_{j=1}^{N-1} \left(\f\!-\!N\!+\!2j\right) \J^N
$$
in $Q_{2N}$, which, in the critical case, is only caused by the
$v_n$-term.

In general, \eqref{Q-gen-rec} identifies $Q_{2N}$ with a sum of the
form
\begin{equation}\label{q-sum}
\Q_{2N} + \cdots + (-1)^N N! (N\!-\!1)! 2^{2N-1} v_{2N}.
\end{equation}
Now infinitesimal conformal variation of this sum yields an operator
of the form $\P_{2N} + \cdots$. Thus, Conjecture \ref{Q-quadrat}
implies a representation formula $P_{2N} = \P_{2N} + \cdots$.
Conjecture \ref{formula} actually predicts huge cancellations in
that sum. It is crucial to understand the mechanism of these
cancellations.

A good understanding of the infinitesimal conformal variation of the
quantity in \eqref{q-sum} would be an important ingredient in a
proof of the conformal covariance of the subcritical operator
\begin{multline}\label{mod-op}
\P_{2N}^0 - c_N \delta(\Rho^{N-1} \# d) \\ + \left(\f\!-\!N\right)
(-1)^N (\Q_{2N} + \cdots + (-1)^N N! (N\!-\!1)! 2^{2N-1} v_{2N}), \;
2N < n
\end{multline}
(in the locally conformally flat case) along similar lines as in the
critical case. The constant term of the operator \eqref{mod-op} is
given by the sum in \eqref{q-sum}. Such a proof is independent of
the recognition of the constant term as $Q_{2N}$. We shall
illustrate this for $N=2$ in Section \ref{app-pan}.

The proofs of the conformal covariance of the respective critical
operators $P_4$, $P_6$ and $\PP_8$ rest only on the conformal
transformation properties of the lower order GJMS-operators in the
respective primary parts, and of the tensors which contribute to the
secondary parts. In particular, the proof of Theorem
\ref{P8-non-flat} does not require explicit formulas for the
extended obstruction tensors. This feature of the proofs nurtures
the hope for a similar treatment of the general case.

The original ambient metric construction \cite{GJMS} generates the
operator $P_{2N}$ from the action of the power
$\Delta_{\tilde{g}}^N$ of the Laplacian of the ambient metric
$\tilde{g}$ on a space of homogenous functions of a certain degree
depending on $N$. Since the functional spaces depend on $N$, even
the very existence of recursive formulas for these operators remains
obscure from this perspective.

The method of recursive constructions of conformally covariant
powers of the Laplacian described here does {\em not} rest on the
ambient metric. Nevertheless, the construction somehow forces the
Taylor coefficients of the ambient metric to appear. In fact,
\eqref{haupt-EF} states that the Taylor coefficients of the
second-order operator on the right-hand side provide appropriate
correction terms which can be used to make the respective operators
$\P_{2N}^0$ conformally covariant. These correction terms contain
the full information on the ambient metric. It seems that there are
no alternative choices for these terms.

It is often fruitful to think of conformally covariant differential
operators on general manifolds as ``curved analogs" of their special
cases on spheres. In the opposite direction, tractor calculus offers
constructions of curved analogs of differential operators on spheres
which are equivariant with respect to the conformal group. A recent
manifestation of this line of thinking is the method of curved
Casimir operators \cite{CGS}. Although the present paper does not
emphasize the perspective of curved analogs, it is tempting to ask
for a representation theoretical interpretation of the operators
$\M_{2N}$. In particular, regarding the identities in Theorem
\ref{sphere} and Theorem \ref{pseudo} as non-linear relations among
intertwining operators for principal series representations (see
\eqref{nonlinear}) of $O(q+1,p+1)$ motivates to ask for
representation theoretical proofs of these relations.

One consequence of the holographic formula \eqref{hol} for the
critical $Q$-curvature $Q_n$ is the proportionality \cite{GZ}
\begin{equation}\label{proportional}
2 \int_{M^n} Q_n vol = 2^n (-1)^\f \left(\f\right)!
\left(\f\!-\!1\right)! \int_{M^n} v_n vol.
\end{equation}
On the other hand, \eqref{Q-gen-rec} predicts that
\begin{equation}\label{crit-Q-con}
2 Q_n - 2^n (-1)^\f \left(\f\right)! \left(\f\!-\!1\right)! v_n  = 2
\Q_n - \sum_{j=1}^{\f-1} \frac{j(\f\!-\!j)}{\f} \binom{\f}{j}^2
\Lambda_{n-2j} \Lambda_{2j},
\end{equation}
where
$$
\Q_n = (-1)^{\f-1} \sum_{|I|+a=\f, \; a \ne \f} (-1)^a m_{(I,a)}
P_{2I}(Q_{2a}).
$$
Hence a proof of the identity
\begin{equation}\label{int-test}
\int_{M^n} \left[ 2 \Q_n - \sum _{j=1}^{\f-1} \frac{j(n\!-\!2j)}{n}
\binom{\f}{j}^2 \Lambda_{n-2j} \Lambda_{2j} \right] vol = 0
\end{equation}
could be regarded as support for \eqref{crit-Q-con}. It would be
interesting to give a {\em direct} proof that the integrand in
\eqref{int-test} is a total divergence.

In small dimensions, this can be verified directly. For instance, a
calculation in dimension $n=6$ shows that the integrand has the form
$$
- 2P_2^0 (Q_4) + 2 P_4^0(Q_2) - 3 P_2^0 P_2(Q_2).
$$
Similarly, we find that the {\em critical} $Q_8$ is given by the sum
of the reduced primary part
\begin{multline*}
[-3 P_2^0(Q_6) - 3 P_6^0(Q_2) + 9 P_4^0(Q_4) \\ + 8P_2^0P_4(Q_2) -
12 P_2^0P_2(Q_4) + 12 P_4^0 P_2 (Q_2) - 18 P_2^0P_2^2(Q_2)],
\end{multline*}
the additional terms
\begin{equation*}
12 [\Delta (Q_4) Q_2 - Q_4 \Delta(Q_2)] + 18 [\Delta^2(Q_2) -
\Delta(Q_2)^2] + 54 [\Delta(Q_2) Q_2^2 - Q_2 \Delta(Q_2^2)]
\end{equation*}
and $3! 4! 2^7 v_8$.

In particular, such a proof requires to verify that, for any
non-trivial composition $I$ of size $\f$, the coefficient of
$Q_{2I}$ in the integrand vanishes. We confirm the vanishing in two
interesting special cases. First, let $I=(p,q)$ with $|I|=\f$. The
coefficient of $Q_{2I}$ in the integrand in \eqref{int-test} is
given by the difference of
$$
- 2 \left( (n/2-p) m_{(p,q)} + (n/2-q) m_{(q,p)} \right)
$$
and
$$
\frac{p(\f\!-\!p)}{\f} \binom{\f}{p}^2 + \frac{q(\f\!-\!q)}{\f}
\binom{\f}{q}^2.
$$
Now the formula
$$
m_{(p,q)} = - \binom{\f-1}{p} \binom{\f-1}{q}
$$
(see \eqref{m-double}) shows that the difference vanishes. Next, let
$I=(i,j,k)$ with $|I|=\f$. \eqref{bino} and Corollary \ref{self}
imply
\begin{equation}\label{rec-3-2}
\frac{m_{(i,j,k)}}{|I|-k} = - \binom{|I|}{k}^2 \frac{k(|I|-k)}{|I|}
\frac{m_{(j,i)}}{|I|-i}.
\end{equation}
The coefficient of $Q_{2I}$ in the integrand in \eqref{int-test} is
the sum of
\begin{equation}\label{first-sum}
-2\left(\f\!-\!i\right)\left(\f\!-\!j\right)\left(\f\!-\!k\right)
\sum_{\sigma \in S_3} \frac{m_{\sigma I}}{\f- (\sigma I)_3}
\end{equation}
and a certain linear combination of
\begin{multline}\label{second-sum}
m_{(j,k)} \left(\f\!-\!j\right) + m_{(k,j)} \left(\f\!-\!k\right), \\
m_{(i,j)} \left(\f\!-\!i\right) + m_{(j,i)} \left(\f\!-\!j\right) \;
\mbox{and} \;  m_{(i,k)} \left(\f\!-\!i\right) + m_{(k,i)}
\left(\f\!-\!k\right).
\end{multline}
Now the relation \eqref{rec-3-2} yields a cancellation of the six
terms in \eqref{first-sum} against the six terms in
\eqref{second-sum}.

We finish with some comments on the generating function $\G$ (see
\eqref{G-def}). The relation
\begin{equation}\label{LL}
\L_{2N} = - (-1)^N \Lambda_{2N}/(N\!-\!1)!, \; N \ge 1
\end{equation}
between $\Lambda_{2N}$ and the leading coefficient of the
polynomials $Q_{2N}^{res}(\lambda)$ (see \eqref{Taylor-q-res})
implies $\G(r) = \L(r)$, where
$$
\L(r) \st - \sum_{N \ge 0} \L_{2N} \frac{r^N}{N!}, \quad \L_0 \st
-1.
$$
In these terms, Conjecture \ref{Q-G} reads
\begin{equation}\label{mystery}
\L(r^2/4) = \sqrt{v(r)}.
\end{equation}
Thus, for the proof of Conjecture \ref{Q-G} it suffices to prove the
relation \eqref{LL} and to establish \eqref{mystery}. \eqref{LL}
should be a consequence of the recursive structure of the
$Q$-polynomials (see the discussion in Section \ref{polynomial}).
For a proof of \eqref{mystery} see \cite{juhl-Q8} (Proposition 4.2).

%%%%%%%%%%%%%%%%%%%%%%%%%%%%%%%%%%%%%%%%%%%%%%%%%%%%%%%%%%%%%%%%%%%%%%%%%%
\section{Appendix}\label{app}

Here we present self-contained proofs of the conformal covariance of
$P_4$ and $P_6$ in the respective critical dimensions and for
general metrics. These proofs serve as illustrations of the general
lines of arguments of this paper.

%%%%%%%%%%%%%%%%%%%%%%%%%%%%%%%%%%%%%%%%%%%%%%%%%%%%%%%%%%%%%%%%%%%%%%%%%%
\subsection{The critical Paneitz operator}\label{app-pan}

In the present section we prove the conformal covariance of the
critical Paneitz operator. We also discuss an extension of the
argument to general dimensions.

Let $n=4$. We write $P_4$ (see \eqref{pan}) in the form
\begin{equation}\label{trick}
P_4 = (P_2^2)^0 - 4 \delta(\Rho \# d) = \P_4^0 - 4 \T_1.
\end{equation}
The following result is a special case of Theorem \ref{c-cv}.

\begin{lemm}\label{cv-4} For $n=4$ and $\P_4 = P_2^2$,
$$
(d/dt)|_0 \left(e^{4t\varphi} \P_4(e^{2t\varphi}g) \right) =
\left[P_2(g),[P_2(g),\varphi]\right].
$$
\end{lemm}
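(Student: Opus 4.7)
The plan is to exploit the conformal covariance of $P_2$ in the critical dimension $n=4$, reducing the left-hand side to a product of multiplication operators sandwiching two copies of $P_2(g)$, and then differentiate at $t=0$ and recognize the result as an iterated commutator.

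First I would specialize the conformal transformation law
$$e^{(\f+1)\varphi}\circ P_2(e^{2\varphi}g)=P_2(g)\circ e^{(\f-1)\varphi}$$
to $n=4$, where $\f+1=3$ and $\f-1=1$, obtaining
$$P_2(e^{2t\varphi}g)=e^{-3t\varphi}\circ P_2(g)\circ e^{t\varphi}.$$
Squaring this identity produces
$$\P_4(e^{2t\varphi}g)=e^{-3t\varphi}\circ P_2(g)\circ e^{-2t\varphi}\circ P_2(g)\circ e^{t\varphi},$$
since the middle two exponentials combine as $e^{t\varphi}\circ e^{-3t\varphi}=e^{-2t\varphi}$. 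Multiplying on the left by $e^{4t\varphi}$ cancels the outer factor (this is where the critical relation $\f+N=n$ and $\f-N=0$ is used) and gives the clean symmetric sandwich
$$e^{4t\varphi}\,\P_4(e^{2t\varphi}g)=e^{t\varphi}\circ P_2(g)\circ e^{-2t\varphi}\circ P_2(g)\circ e^{t\varphi}.$$

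Next I would differentiate at $t=0$. Since each of the three exponential factors contributes one term via the product rule, and $P_2(g)$ itself is $t$-independent on the right-hand side, the derivative equals
$$\varphi\circ P_2^2-2\,P_2\circ\varphi\circ P_2+P_2^2\circ\varphi.$$
A direct expansion shows that this is precisely $[P_2,[P_2,\varphi]]$, since
$$[P_2,[P_2,\varphi]]=P_2^2\,\varphi-2\,P_2\,\varphi\,P_2+\varphi\,P_2^2,$$
which matches term by term and completes the proof.

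There is no genuine obstacle: the argument works because $n=4$ and $N=2$ make $\f-N=0$, so the conjugation $e^{(\f+N)t\varphi}\P_{2N}(e^{2t\varphi}g)e^{-(\f-N)t\varphi}$ coincides with the expression appearing in the lemma, and the three surviving exponentials yield exactly the three coefficients $1,-2,1$ of the double commutator. The same mechanism is what drives Theorem \ref{c-cv} in full generality, and combined with the decomposition \eqref{trick} together with the variational behavior of $\T_1$ it will deliver the conformal covariance of the critical Paneitz operator.
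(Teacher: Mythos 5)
Your proposal is correct and follows essentially the same route as the paper: specialize the conformal covariance of $P_2$ at $n=4$, square it to obtain the sandwich identity $e^{4t\varphi}\,\P_4(e^{2t\varphi}g)=e^{t\varphi}\,P_2\,e^{-2t\varphi}\,P_2\,e^{t\varphi}$, differentiate at $t=0$, and recognize $\varphi P_2^2-2P_2\varphi P_2+P_2^2\varphi$ as $[P_2,[P_2,\varphi]]$. No gaps; the only difference is that you spell out the exponent bookkeeping ($\f\pm N$ at $n=4$, $N=2$) a bit more explicitly than the paper does.
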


\begin{proof} The relation
$$
e^{3\varphi} P_2(e^{2\varphi}g) = P_2(g) e^{\varphi}
$$
implies
$$
e^{4\varphi} \hat{P}_2^2 = e^{\varphi} P_2 e^{-2\varphi} P_2
e^{\varphi},
$$
where $P_2$ and $\hat{P}_2$ are the respective Yamabe operators for $g$
and $\hat{g} = e^{2\varphi} g$. Hence
\begin{align*}
(d/dt)|_0 (e^{4t\varphi} \P_4(e^{2t\varphi}g)) & = \varphi P_2^2 - 2
P_2 \varphi P_2 + P_2^2 \varphi \\
& = [P_2,[P_2,\varphi]].
\end{align*}
The proof is complete.
\end{proof}

Lemma \ref{cv-4} implies that
$$
(d/dt)|_0 \left(e^{4t\varphi} \P^0_4(e^{2t\varphi}g) \right) =
\left[P_2,[P_2,\varphi]\right]^0 = [\Delta,[\Delta,\varphi]]^0.
$$
But $[\Delta,\varphi] u = 2(d\varphi,du) + u \Delta \varphi$ gives
\begin{align}\label{comm-4}
[\Delta, [\Delta, \varphi]]^0 u & = 2 \Delta (du, d\varphi) + 2 (d
\Delta \varphi, du) - 2 (d\varphi, d \Delta u) \nonumber \\
& = 4 (\Hess(u), \Hess(\varphi)) + 4 (d \Delta\varphi,du) +
4(\Ric,du \otimes d\varphi)
\end{align}
using Weitzenb\"ock's formula. Here $\Hess(X,Y)(u) = \langle
\nabla_X(du),Y\rangle$ is the covariant Hessian of $u$. In order to
determine the conformal variation of $\T_1 = \delta (\Rho \# d)$, we
write $\T_1 (u) = - (d \J,du) - (\Rho, \Hess (u))$. The
transformation laws
$$
e^{2\varphi} \hat{\J} = \J - \Delta \varphi - \left(\f\!-\!1\right)
|d \varphi|^2,
$$
\begin{equation}\label{schouten-c}
\hat{\Rho} = \Rho - \Hess (\varphi) - \frac{1}{2} |d\varphi|^2 g +
d\varphi \otimes d\varphi
\end{equation}
and
\begin{equation}\label{hess-c}
\widehat{\Hess}(u) = \Hess (u) - du \otimes d\varphi - d\varphi
\otimes du + (du,d\varphi) g
\end{equation}
imply
\begin{multline}\label{add-4}
(d/dt)|_0 \left(e^{4t\varphi} \T_1(e^{2t\varphi}g)u\right) =
(d\Delta\varphi,du) + \J(du,d\varphi) \\
+ (\Hess (u),\Hess (\varphi)) + 2 (\Rho,du \otimes \varphi).
\end{multline}

Now combining \eqref{comm-4} and \eqref{add-4} with $\Ric = 2 \Rho +
\J g$, yields
$$
(d/dt)|_0 \left(e^{4t\varphi} P_4(e^{2t\varphi}g)u\right) = 0.
$$
This proves the infinitesimal conformal covariance of $P_4$.

The above calculations also confirm that
\begin{equation}\label{second-4}
4 (d/dt)|_0 \left(e^{4t\varphi} \T_1(e^{2t\varphi}g)\right) =
[\T_0(g),[\T_0(g),\varphi]]^0.
\end{equation}
This is a special case of Theorem \ref{comm-2}.

In the above argumentation, one can replace the calculation of first
order terms by the following reasoning. $P_4$ and $\T_1$ both are
self-adjoint without constant term. It follows that the second-order
differential operators on the right-hand sides of \eqref{comm-4} and
\eqref{add-4} are self-adjoint without constant terms. Two such
operators coincide iff their main parts coincide. Hence it suffices
to compare the coefficients of $\Hess(u)$. In particular, this
argument avoids to invoke Weitzenb\"ock's formula.

Finally, we discuss how the above arguments extend to general
dimensions $n \ge 3$. In that case, the recursive formula for $P_4$
reads
\begin{equation}\label{present-p4}
P_4 = \P_4^0 - 4 \T_1 + \left(\f\!-\!2\right) Q_4.
\end{equation}
This formula follows from \eqref{pan} by direct calculation.

We use \eqref{present-p4} to prove the conformal covariance of
$P_4$. First of all, analogous calculations as above show that
\begin{equation*}
(d/dt)|_0 \left( e^{(\f+2)t\varphi} \P_4(e^{2t\varphi}g)
e^{-(\f-2)t\varphi} \right) = [P_2(g),[P_2(g),\varphi]].
\end{equation*}
This result implies
\begin{multline*}
(d/dt)|_0 \left[ e^{(\f+2)t\varphi} \P_4^0 (e^{2t\varphi}g)
(e^{-(\f-2)t\varphi} u) \right] + \left(\f-2\right) \P_4^0(g)(\varphi) u \\
= [P_2(g),[P_2(g),\varphi]]^0 u.
\end{multline*}
Hence
\begin{multline*}
(d/dt)|_0 \left[ e^{(\f+2)t\varphi} \left(\P_4^0 +
\left(\f\!-\!2\right) Q_4 \right)(e^{2t\varphi}g)
(e^{-(\f-2)t\varphi} u) \right] \\
= - \left(\f-2\right) \P_4^0(g)(\varphi) u + \left(\f-2\right)
(\P_4^0(g) - 4 \T_1(g))(\varphi) u + [P_2(g),[P_2(g),\varphi]]^0 u \\
= [P_2(g),[P_2(g),\varphi]]^0 u - \left(\f\!-\!2\right) 4
\T_1(g)(\varphi) u.
\end{multline*}

Here we have used the variational formula
\begin{equation}\label{var-q4}
(d/dt)|_0 (e^{4t\varphi} Q_4(e^{2t\varphi}g)) = (\P_4^0(g) - 4
\T_1(g))(\varphi)
\end{equation}
for $Q_4$. For the proof of \eqref{var-q4} it is natural to combine
the formula
\begin{equation}\label{present-q4}
Q_4 = -P_2(Q_2) - Q_2^2 + 16 v_4
\end{equation}
with the conformal transformation laws for $P_2$, $Q_2$ and $\Rho$.
The latter formula immediately shows that the left hand side of
\eqref{var-q4} is of the form
$$
P_2^2 - 4 \delta(\Rho \# d) + LOT = \P_4 - 4 \T_1 + LOT.
$$
The actual calculation yields \eqref{var-q4}.

Next, \eqref{second-4} generalizes to
\begin{equation*}
4 (d/dt)|_0 \left( e^{(\f+2)t\varphi} \T_1(e^{2t\varphi}g)
e^{-(\f-2)t\varphi} \right) = [\T_0(g),[\T_0(g),\varphi]]^0 -
\left(\f\!-\!2\right) 4 \T_1(g)(\varphi).
\end{equation*}

Combining these results yields the conformal covariance of $P_4$.

We finish with two comments. In contrast to the critical case, the
proof for $n \ne 4$ requires conformal variation of $Q_4$. Only by
recognizing the right-hand side of \eqref{present-q4} as the fourth
order $Q$-curvature leads to the identification of the right-hand
side of \eqref{present-p4} as the Paneitz operator.

%%%%%%%%%%%%%%%%%%%%%%%%%%%%%%%%%%%%%%%%%%%%%%%%%%%%%%%%%%%%%%%%%%%%%%%%%%%%%%%%%
\subsection{The critical $P_6$ for general metrics}\label{inv-4-6}

Here we prove the infinitesimal conformal covariance of the operator
\begin{equation}\label{op-6}
\PP_6 \st \left[2(P_2 P_4 + P_4 P_2) - 3 P_2^3\right]^0 - 48 \delta
(\Rho^2 \# d) - 8 \delta(\B \#d)
\end{equation}
in dimension $n=6$ (see \eqref{p6-general}). The result implies the
conformal covariance
$$
e^{6\varphi} \PP_6(e^{2\varphi}g) = \PP_6(g).
$$

\begin{lemm}\label{cv-6} Let $n=6$ and $\P_6 = 2(P_2 P_4 + P_4 P_2) - 3 P_2^3$.
Then
\begin{equation}\label{cv-p6}
(d/dt)|_0 \left(e^{6t\varphi} \P_6(e^{2t\varphi}g) \right) = 4
[\M_4(g),[P_2(g),\varphi]] + 2 [P_2(g),[\M_4(g),\varphi]].
\end{equation}
\end{lemm}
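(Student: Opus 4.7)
The plan is to recognize \eqref{cv-p6} as the specialization of Theorem \ref{c-cv} to $N=3$ in the critical dimension $n=6$. In that case $\f-N=0$, so the conjugating factor $e^{-(\f-N)t\varphi}$ on the left-hand side of \eqref{c-cv-gen} is trivial, and the left-hand side collapses to $(d/dt)|_0\bigl(e^{6t\varphi}\P_6(e^{2t\varphi}g)\bigr)$, exactly the quantity in \eqref{cv-p6}.

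The sum on the right-hand side of \eqref{c-cv-gen} has just two terms when $N=3$. The $j=1$ contribution is $\binom{2}{0}^{2}(3-1)\,[\M_2(g),[\M_4(g),\varphi]] = 2[P_2(g),[\M_4(g),\varphi]]$, using $\M_2=P_2$ from Example \ref{m24}. The $j=2$ contribution is $\binom{2}{1}^{2}(3-2)\,[\M_4(g),[\M_2(g),\varphi]] = 4[\M_4(g),[P_2(g),\varphi]]$. Adding these yields precisely the right-hand side of \eqref{cv-p6}.

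Thus no independent calculation is required: the lemma is a direct corollary of Theorem \ref{c-cv}. Should one wish to give a self-contained derivation, the natural approach would be to invoke the special cases of \eqref{conf-cov} in dimension $n=6$, namely $\hat P_2 = e^{-4t\varphi} P_2 e^{2t\varphi}$ and $\hat P_4 = e^{-5t\varphi} P_4 e^{t\varphi}$, substitute these into $\hat\P_6 = 2(\hat P_2\hat P_4 + \hat P_4\hat P_2) - 3\hat P_2^{3}$, multiply by $e^{6t\varphi}$ on the left, differentiate at $t=0$, and reorganize the resulting sum into commutators. The delicate step in that direct route is the telescoping within the cubic term $\hat P_2^{3}$: one must check that the weights attached to the three copies of $P_2$ balance against the weights appearing in $P_2P_4$ and $P_4P_2$ so that the result reduces to the combination on the right of \eqref{cv-p6} rather than to a genuine third-order operator. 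This bookkeeping is precisely the combinatorial content carried by identity \eqref{c-last} in the proof of Theorem \ref{c-cv}, and invoking the theorem shortcuts it entirely.
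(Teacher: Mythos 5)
Your proposal is correct, and its main route is a legitimate one: with $N=3$, $n=6$ we have $\f-N=0$, so the conjugation in \eqref{c-cv-gen} is trivial, and the two terms of the sum carry coefficients $\binom{2}{0}^2(3-1)=2$ and $\binom{2}{1}^2(3-2)=4$, which together with $\M_2=P_2$ (Example \ref{m24}) give exactly the right-hand side of \eqref{cv-p6}; since Theorem \ref{c-cv} is established earlier, nothing is missing. This is, however, not how the paper proves the lemma. The lemma lives in the Appendix, whose stated purpose is a self-contained illustration, and the paper's proof is the direct computation you only sketch: in dimension $6$ the covariance laws give $\hat P_2=e^{-4\varphi}P_2e^{2\varphi}$ and $\hat P_4=e^{-5\varphi}P_4e^{\varphi}$, hence $\hat P_2\hat P_4=e^{-4\varphi}P_2e^{-3\varphi}P_4e^{\varphi}$, $\hat P_4\hat P_2=e^{-5\varphi}P_4e^{-3\varphi}P_2e^{2\varphi}$, $\hat P_2^3=e^{-4\varphi}P_2e^{-2\varphi}P_2e^{-2\varphi}P_2e^{2\varphi}$; differentiating at $t=0$ and collecting terms yields $4[P_4,[P_2,\varphi]]+2[P_2,[P_4,\varphi]]-6[P_2^2,[P_2,\varphi]]$, and the single regrouping identity $[P_2^2,[P_2,\varphi]]=[P_2,[P_2^2,\varphi]]$ (Jacobi, since $P_2$ commutes with $P_2^2$) converts this into the $\M_4$-commutators. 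Comparing the two: quoting Theorem \ref{c-cv} is shorter but imports the full combinatorial identity \eqref{c-last}, which is far more machinery than this low-order case needs, and it runs against the Appendix's self-contained intent; the direct route costs half a page of bookkeeping and makes explicit the one genuinely nontrivial step, which your sketch of that route leaves vague -- the delicate point is not a balancing of the weights on the three copies of $P_2$ against those in $P_2P_4$ and $P_4P_2$ (that part is automatic once the derivatives are written out), but precisely the commutation identity above that turns the $P_2^2$-commutator terms into the $\M_4$-form.
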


\begin{proof} The relations
\begin{equation*}
e^{4\varphi} P_2(e^{2\varphi}g) = P_2(g) e^{2\varphi} \quad
\mbox{and} \quad  e^{5\varphi} P_4(e^{2\varphi}g) = P_4(g)
e^{\varphi}
\end{equation*}
imply
\begin{equation*}
\hat{P}_2 \hat{P}_4 = e^{-4\varphi} P_2 e^{-3\varphi} P_4 e^\varphi,
\quad \hat{P}_4 \hat{P}_2 = e^{-5\varphi} P_4 e^{-3\varphi} P_2
e^{2\varphi}
\end{equation*}
and
$$
\hat{P}_2^3 = e^{-4\varphi} P_2 e^{-2\varphi} P_2 e^{-2\varphi} P_2
e^{2\varphi}.
$$
Hence
\begin{align*}
(d/dt)|_0 \left(\P_6(e^{2t\varphi}g) \right) & = 2 \left( -4
\varphi P_2 P_4 - 3 P_2 \varphi P_4 + P_2 P_4 \varphi \right) \\
& + 2 \left (-5 \varphi P_4 P_2 - 3 P_4 \varphi P_2 + 2 P_4 P_2
\varphi \right) \\
& - 3 \left( -4\varphi P_2^3 - 2 P_2 \varphi P_2^2 - 2 P_2^2 \varphi
P_2 + 2 P_2^3 \varphi \right).
\end{align*}
The latter formula is equivalent to
\begin{align*}
(d/dt)|_0 \left(e^{6t\varphi} \P_6(e^{2t\varphi}g) \right) & = 2
\left( -2 [P_2,\varphi] P_4 + P_2 [P_4,\varphi]\right) \\
& + 2 \left( -[P_4,\varphi]P_2 + 2 P_4 [P_2,\varphi]\right) \\
& - 3 \left( -2[P_2,\varphi]P_2^2 + 2 P_2^2 [P_2,\varphi]\right) \\
& = 4 \left[P_4,[P_2,\varphi]\right] + 2
\left[P_2,[P_4,\varphi]\right] - 6 \left[P_2^2,[P_2,\varphi]\right].
\end{align*}
Now $\left[P_2^2,[P_2,\varphi]\right] =
\left[P_2,[P_2^2,\varphi]\right]$ yields the assertion.
\end{proof}

A similar calculation shows that in general dimensions,
\begin{equation*}
(d/dt)|_0 \left(e^{(\f+3)t\varphi} \P_6(e^{2t\varphi}g)
e^{-(\f-3)t\varphi} \right) = 4 [\M_4,[P_2,\varphi]] + 2
[P_2,[\M_4,\varphi]].
\end{equation*}

Lemma \ref{cv-6} implies that
\begin{equation}\label{6-short}
(d/dt)|_0 \left(e^{6t\varphi} \P_6^0(e^{2t\varphi}g) \right) = 4
[\M_4,[P_2,\varphi]]^0 + 2 [P_2,[\M_4,\varphi]]^0.
\end{equation}
The right-hand side coincides with
$$
4 [\M^0_4,[\M^0_2,\varphi]]^0 + 2 [\M^0_2,[\M^0_4,\varphi]]^0.
$$
Using
$$
\M_4^0 = - 4\delta(\Rho \# d) = - 4 \T_1 \quad \mbox{and} \quad
\M_2^0 = \Delta
$$
this sum equals
\begin{equation}\label{sum-6}
-16 [\T_1,[\Delta,\varphi]]^0 - 8 [\Delta,[\T_1,\varphi]]^0.
\end{equation}
As a self-adjoint second-order operator which annihilates constants,
this operator is determined by its main part. A calculation shows
that
$$
[\T_1,[\Delta,\varphi]] u = - 2 (\Rho,\Hess (d\varphi,du)) + 2
(d\varphi, d(\Rho,\Hess (u))) + \mbox{first order terms}.
$$
The main part of this operator is given by
$$
-4\Rho_{ij} \Hess^i_k(\varphi) \Hess^{jk}(u) + 2 \nabla_i(\Rho)_{jk}
\Hess^{jk}(u) \varphi^i.
$$
Similarly, for the second term in \eqref{sum-6} we find
$$
[\Delta,[\T_1,\varphi]] u = -2 \Delta (\Rho, d\varphi \otimes du) +
2 (\Rho,d\varphi \otimes d\Delta u) + \mbox{first order terms}.
$$
The main part of this operator is given by
$$
-4 \nabla_k(\Rho)_{ij} \varphi^i \Hess^{jk}(u) - 4 \Rho_{ij}
\Hess^{ik}(\varphi) \Hess^j_k(u).
$$
Thus, for the main part of \eqref{sum-6} we find the formula
\begin{multline}\label{A}
96 \Rho_{ij} \Hess^{ik}(\varphi) \Hess^j_k(u) - 32
\nabla_i(\Rho)_{jk} \Hess^{jk}(u) \varphi^i + 32
\nabla_k(\Rho)_{ij} \Hess^{jk}(u) \varphi^i \\
= 96 \Rho_{ij} \Hess^{ik}(\varphi) \Hess^j_k(u) + 32 \Y_{kij}
\Hess^{jk}(u) \varphi^i.
\end{multline}
Next, the main part of $(d/dt)|_0 \left(e^{6t\varphi}
\T_2(e^{2t\varphi}g) \right)$ is
\begin{equation}\label{B}
\Hess_{ik}(\varphi) \Rho_j^k \Hess^{ij}(u) + \Rho_{ik}
\Hess_j^k(\varphi) \Hess^{ij}(u) = 2 \Hess_{ik}(\varphi) \Rho_j^k
\Hess^{ij}(u).
\end{equation}
Finally, by the transformation law
\begin{equation}\label{bach-conform}
e^{2\varphi} \hat{\B}_{ij} = \B_{ij} - (n\!-\!4) (\Y_{ikj} +
\Y_{jki}) \varphi^k + (n\!-\!4) \C_{kijl}\varphi^k \varphi^l
\end{equation}
for the Bach tensor, the main part of the conformal variation of
$\delta(\B\#du)$ is
\begin{equation}\label{C}
4 \Y_{ikj} \varphi^k \Hess^{ij}(u).
\end{equation}
Now
$$
\eqref{A} - 48 \times \eqref{B} - 8 \times \eqref{C} = 0
$$
implies the infinitesimal conformal covariance of the operator
\eqref{op-6}. \hfill $\square$
\medskip

The proof shows that the only property of the Bach tensor $\B_{ij}$
which enters is that its conformal variation is given by a multiple
of
$$
(\Y_{ikj} + \Y_{jki}) \varphi^k.
$$
In the proof of Theorem \ref{P8-non-flat}, a similar property of
$\Omega^{(2)}$ plays an analogous role (with the symmetrized Cotton
tensor replaced by the higher Cotton tensor $\Y^{(2)}$).

The above calculations directly confirm the following special case
of Theorem \ref{comm-2}. We recall that $\T_0 = -\Delta$.

\begin{corr}\label{t-6} In the locally conformally flat case,
$$
6 (d/dt)|_0 \left(e^{6t\varphi} \T_2(e^{2t\varphi}g) \right) =
[\T_0,[\T_1,\varphi]]^0 + 2 [\T_1,[\T_0,\varphi]]^0.
$$
\end{corr}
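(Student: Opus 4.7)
The statement is the specialization $n=6$, $\f-1=2$ of Theorem \ref{comm-2}, and my plan is to extract it directly from the main-part calculations that already appear in the preceding proof of the conformal covariance of $\PP_6$.

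The reduction step is general: both sides of the claimed identity are self-adjoint second-order differential operators that annihilate constants—the left-hand side because it is the conformal variation of the self-adjoint operator $\T_2$ without constant term, and each nested commutator $[\T_i,[\T_j,\varphi]]^0$ is manifestly of that type. As was used in the treatment of $\T_1$ above (after equation \eqref{add-4}) and in the symbol-level reductions throughout Section \ref{inf-var}, two such operators coincide once their symbols (the coefficient of $\Hess(u)$) agree. So the task reduces to comparing main parts only.

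To carry out the comparison, I would rewrite the right-hand side using $\T_0 = -\Delta$ as
$$
-[\Delta,[\T_1,\varphi]]^0 - 2[\T_1,[\Delta,\varphi]]^0,
$$
whose main part is already computed in the proof of the conformal covariance of $\PP_6$. Collecting the two contributions, the $\Rho\,\Hess(\varphi)\,\Hess(u)$ terms combine—after relabelling indices and invoking the symmetries of $\Rho$ and $\Hess$—to $12\,\Rho_{ij}\Hess^{ik}(\varphi)\Hess^j_k(u)$, while the remaining first-derivative-of-$\Rho$ terms assemble into $4(\nabla_k(\Rho)_{ij} - \nabla_i(\Rho)_{jk})\varphi^i\Hess^{jk}(u) = 4\,\Y_{kij}\varphi^i\Hess^{jk}(u)$ by the definition \eqref{cotton} and the symmetry of $\Rho$. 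Under the locally conformally flat hypothesis the Cotton tensor $\Y$ vanishes, so the derivative-of-$\Rho$ piece drops out and only $12\,\Rho_{ij}\Hess^{ik}(\varphi)\Hess^j_k(u)$ remains. The left-hand side, by contrast, is $6$ times the main part recorded in \eqref{B}, namely $12\,\Rho^k_j \Hess_{ik}(\varphi)\Hess^{ij}(u)$, which agrees with the expression just obtained after an obvious raising and lowering of indices.

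The only real obstacle is the index bookkeeping together with the sign flip induced by $\T_0 = -\Delta$; everything genuinely new is already packaged in the $\PP_6$ computation, and the vanishing of the $\nabla\Rho$ remainder is forced automatically by the Cotton-tensor vanishing in the locally conformally flat case.
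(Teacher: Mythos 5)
Your argument is correct and follows the paper's own route: the paper likewise observes that both sides are self-adjoint second-order operators annihilating constants, hence determined by their main parts, and then reads the comparison off the main-part computations already done for $\PP_6$ (the terms \eqref{A}, \eqref{B}), with the $\nabla\Rho$ contributions organizing into the Cotton tensor $\Y$, which vanishes in the locally conformally flat case. Your coefficient bookkeeping ($12\,\Rho_{ij}\Hess^{ik}(\varphi)\Hess^j_k(u)$ on both sides, plus $4\,\Y_{kij}\varphi^i\Hess^{jk}(u)$ on the right) matches the paper's \eqref{A} after dividing by $8$, so the proposal is essentially the paper's proof written out in detail.
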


\begin{proof} This is an identity of self-adjoint second-order
operators which annihilate constants. It suffices to compare the
main parts of both sides. The result follows from the above
calculations.
\end{proof}

For any constant $\alpha$, the operator $P_4' = P_4 + \alpha |\C|^2$
is conformally covariant. The same proof as above shows that in
dimension $n=6$,
$$
P_6' = (2(P_2 P_4' + P_4' P_2) - 3 P_2^3)^0 - 48 \delta (\Rho^2 \#
d) - 8 \delta(\B \#d)
$$
is conformally covariant. The latter conformally covariant cube of
the Laplacian differs from the GJMS-operator $P_6$ by a multiple of
the self-adjoint operator $\delta (|\C|^2 du)$. But note that the
latter operator is not conformally covariant in dimensions $n \ne
6$. In other words, universality is lost.

\end{document}